\newtheorem{thm}{Theorem}[section]
\newtheorem{lem}[thm]{Lemma}
\newtheorem{cor}[thm]{Corollary}
\newtheorem{prop}[thm]{Proposition}
\newtheorem*{THM}{Theorem}
\newtheorem{DEF}[thm]{Definition}
\theoremstyle{remark}                  
\newtheorem{rem}[thm]{Remark}
\newcommand{\BlueComment}[1]{\color{black}{#1}\color{black}}
\begin{document}

\begin{abstract}
We introduce a notion of non-local almost minimal boundaries similar to that introduced by Almgren in geometric measure theory. Extending methods developed recently for non-local minimal \BlueComment{boundaries } we prove that flat non-local almost minimal boundaries are smooth. This can be viewed as a non-local version of the Almgren-De Giorgi-Tamanini regularity theory. The main result has several applications, among these $C^{1,\alpha}$ regularity for sets with prescribed non-local mean curvature in $L^p$ and regularity of solutions to non-local obstacle problems.
\end{abstract}

\begin{keyword}
minimal surfaces; almost minimal surfaces; non-local elliptic equations;  non-local mean curvature; geometric measure theory; integro-differential equations.
\end{keyword}

\title{Regularity for non-local almost minimal boundaries and applications}

\author{M. Cristina Caputo}

\ead{caputo@math.utexas.edu}

\author{Nestor Guillen \corref{cor1}}

\ead{nguillen@math.utexas.edu}

\cortext[cor1]{Nestor Guillen is partially supported by NSF grant DMS-0654267}

\address{Department of Mathematics. University of Texas at Austin, Austin, TX 78712, USA}

\maketitle


\section{Introduction}

In this article we introduce a notion of non-local almost minimal \BlueComment{boundaries } and study their regularity near flat points \BlueComment{as well as their singularities}. In rough terms,  a non-local minimal \BlueComment{boundary corresponds to } a set whose characteristic function minimizes a \BlueComment{non-local energy functional, namely a } fractional Sobolev norm (\BlueComment{specifically }$H^{s/2}$, $s<1$); they generalize classical minimal surfaces \BlueComment{of codimension one}, in particular, it has been observed that when $s=1$ they are nothing but minimal surfaces.

Non-local minimal surfaces appeared recently in the study of phase transitions with long range interactions, as equilibrium configurations. This was shown in \cite{CS10}, via a level set type scheme (see also \cite{I09}). The regularity theory of non-local minimal surfaces was developed in \cite{CRS10}, there it was shown that such surfaces are smooth except a singular set of at most dimension $n-2$ ($n$ is the space dimension), we follow closely the methods developed there. \\

Our main result states that if an \emph{almost minimal} \BlueComment{boundary} (\BlueComment{for a non-local energy functional}) is flat enough near a point, then in a neighborhood of that point it is smooth, i.e. it has a continuously changing normal. Namely:

\begin{thm}\label{thm main}
Suppose $E$ is $(J_s,\rho,\delta)$-minimal in $B_1$, where $\rho$ satisfies assumptions A1 through A3 (Section 2). There exists $\delta_0=\delta_0(n,s,\rho)$ such that if $E$ is $\delta_0$-flat in $B_1$, then $\partial E$ is $C^{1}$ in $B_{1/2}$.\\
\end{thm}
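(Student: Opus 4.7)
The plan is to mimic the improvement-of-flatness scheme developed in \cite{CRS10} for $J_s$-minimizers, adapted to handle the almost minimality error carried by $\rho$. The starting point is a scaling lemma: if $E$ is $(J_s,\rho,\delta)$-minimal in $B_1$, then for any $r\in(0,1)$, the rescaled set $E_r:=r^{-1}E$ is $(J_s,\rho_r,\delta_r)$-minimal in $B_1$ for a new gauge $\rho_r$ obtained from $\rho$ by the natural fractional scaling. Assumptions A1--A3 should be exactly what is needed so that (i) $\rho_r$ still satisfies A1--A3, and (ii) some integral quantity (likely $\int_0^1 \rho(r)\,dr/r$ or similar Dini-type norm) controls the passage to smaller scales summably. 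The $\delta_r$ part just records that flatness is preserved under rescaling in an appropriate way.

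Next I would prove a compactness statement: any sequence $E_k$ of $(J_s,\rho_k,\delta_k)$-minimizers with $\rho_k,\delta_k\to 0$ converges, up to subsequence in $L^1_{\mathrm{loc}}$ and in Hausdorff distance of boundaries on the flat set, to a genuine $J_s$-minimal boundary in $B_1$. This uses standard density estimates and the monotonicity/upper-semicontinuity type properties one expects (as in \cite{CRS10}), together with the fact that the $\rho$-error vanishes in the limit. With compactness in hand, the core of the argument is an improvement-of-flatness lemma: there exist universal constants $\eta\in(0,1/2)$ and $\delta_0, c_0>0$ such that if $E$ is $(J_s,\rho,\delta)$-minimal in $B_1$, is $\delta$-flat in the direction $e$ with $\delta\leq\delta_0$, and $\rho(1)\leq c_0$, then $E$ is $\eta\delta$-flat in some direction $e'$ in $B_\eta$, with $|e-e'|\leq C\delta$. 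This is proved by contradiction: if it fails, one produces a blow-up sequence of flat almost minimizers whose flatness does not improve; by the compactness step the limit is a flat $J_s$-minimal boundary, and for such a limit the improvement of flatness from \cite{CRS10} gives a contradiction.

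The iteration is then standard: applying the improvement-of-flatness lemma inductively along a geometric sequence of scales $r_k=\eta^k$, and using the scaling lemma to ensure that at each step the rescaled $\rho_{r_k}(1)$ is still below $c_0$, one obtains a sequence of unit vectors $e_k$ with $|e_k-e_{k+1}|\leq C\delta\eta^{k\alpha}+ C\rho_{r_k}(1)$. Under assumption A3 (which I expect is a Dini-type decay condition on $\rho$) the right-hand side is summable, so $e_k$ converges and the rate of convergence yields a uniform modulus of continuity for the normal to $\partial E$ at 0. Translating the argument to every point of $\partial E\cap B_{1/2}$ and using that flatness is an open condition under almost minimality gives $\partial E\in C^1$ in $B_{1/2}$.

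The main obstacle is the improvement-of-flatness lemma for almost minimizers. The pointwise tools used in \cite{CRS10} (Harnack-type inequalities for the level sets, and the linearization of $J_s$ around a hyperplane) are not perturbed by a small $L^\infty$ error on the energy at a single scale, but to set up the contradiction/compactness argument one must be careful that the almost minimality condition passes to the blow-up limit without losing the sharp flatness improvement. This is where the precise form of assumptions A1--A3 really matters, and where most of the technical work will be concentrated.
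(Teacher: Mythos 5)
There is a genuine gap, and it sits exactly where you yourself locate the ``main obstacle'': the improvement-of-flatness lemma for almost minimizers cannot be obtained by the compactness reduction you propose (blow up, pass to a flat $J_s$-minimal limit, quote the improvement of flatness of \cite{CRS10}). In the contradiction argument the quantifiers work against you: if along the contradicting sequence the flatness $\delta_k\to 0$, the $L^1$/uniform limit is simply a half-space and no contradiction is produced, because the required improvement is at scale $\eta\delta_k\to 0$, which is invisible to the convergence; if instead you freeze the flatness $\delta$ and only send the error $\rho_k(1)\to 0$, compactness yields a threshold $c_0=c_0(\delta)$ depending on $\delta$, and in the iteration the flatness at step $k$ is of order $\eta^k\delta\to 0$, so you would need $\rho_{r_k}(1)\le c_0(\eta^k\delta)$, a condition A1--A3 do not give. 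This is why the paper (following Savin's method and \cite{CRS10}) does \emph{not} reduce to the minimal case: the contradiction sequence is dilated \emph{vertically} by the inverse flatness (Lemma \ref{lem blow up}), the limit is the graph of a function $u$, and the contradiction comes from the linearized equation $(-\Delta)^{\frac{1+s}{2}}u=0$ and its Liouville theorem (Lemma \ref{lem Liouville}), with the quantitative input supplied by the Euler--Lagrange inequalities (Theorem \ref{thm second Euler Lagrange}), the partial Harnack estimate (Theorem \ref{thm Harnack estimate}) and the H\"older-type oscillation decay they imply; the almost-minimality error is absorbed through the auxiliary function $\hat\rho$ and Lemma \ref{lem rho doubling}, which is why the final modulus is $C^{1,\hat\rho}$ rather than a geometric rate $\eta^{k\alpha}$.

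A second, nonlocal-specific gap is in your iteration step. The single-scale statement ``$\delta$-flat in $B_1$ implies $\eta\delta$-flat in $B_\eta$'' does not rescale in this setting: after mapping $B_\eta$ to $B_1$ the nonlocal terms see the far-away portions of $\partial E$, whose flatness relative to the new scale has deteriorated, so the hypothesis of the lemma is not reproduced at the next scale. The paper handles this by building the multi-scale flatness hypothesis into both the Harnack estimate and the improvement theorem (Definition 5.6 and Theorem \ref{thm improv}): one assumes the dyadic hierarchy of inclusions $\partial E\cap B_{2^{-l}}\subset\{|x\cdot e_l|\le 2^{-l}\hat\rho(2^{-l})\}$ for all $l\le k$ and propagates it to $l=k+1$. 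Your compactness ingredients (density estimates as in Proposition \ref{prop density}, closure of the class under $L^1$ limits as in Proposition \ref{thm compactness of sets}, summability of the error via the Dini-type condition in A3) do match the paper, but without the vertical blow-up/linearization step and the multi-scale bookkeeping the central lemma of your scheme is unproved, and the proposed route to it fails.
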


\BlueComment{
Moreover, we show that the set of points where $\partial E$ is not $C^1$ is at most $n-2$ dimensional.
\begin{thm}\label{thm main2}
If $E$ is $(J_s,\rho,d)$-minimal in $\Omega$, then the Hausdorff dimension of the singular set $\Sigma_E \subset \partial E \cap \Omega$ is at most $n-2$.
\end{thm}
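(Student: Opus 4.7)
The plan is to apply Federer's dimension reduction together with the regularity theory of \emph{true} non-local minimal surfaces established in \cite{CRS10}. By Theorem \ref{thm main}, $\Sigma_E$ is exactly the set of points of $\partial E \cap \Omega$ at which $E$ fails to be $\delta_0$-flat at every scale, hence is closed in $\partial E \cap \Omega$. The heart of the argument is to show that blow-ups of $(J_s,\rho,d)$-minimal sets are global $J_s$-minimal sets, and that singular points persist under blow-up.

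\emph{Step 1 (blow-ups are minimal).} Fix $x_0\in\Sigma_E$ and set $E_r:=(E-x_0)/r$. The $(J_s,\rho,d)$-minimality of $E$ rescales to a $(J_s,\rho_r,1)$-minimality condition for $E_r$ in $B_1$, where $\rho_r$ is built from $\rho(rt)$ and, by assumptions A1--A3, satisfies $\rho_r\to 0$ uniformly on compact sets as $r\to 0$. Combining the uniform $J_s$-perimeter estimates inherited from almost-minimality with the compactness of the fractional perimeter under $L^1_{\mathrm{loc}}$ convergence, I extract a subsequence $E_{r_k}\to E_\infty$. Passing to the limit in the minimality inequality (whose defect term vanishes because $\rho_{r_k}\to 0$) shows that $E_\infty$ is a genuine $J_s$-minimal set in all of $\mathbb{R}^n$.

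\emph{Step 2 (singular points persist in the limit).} If $0$ were a regular point of $E_\infty$, Theorem \ref{thm main} would give that $E_\infty$ is $(\delta_0/2)$-flat in $B_1$; by $L^1_{\mathrm{loc}}$ convergence the same flatness (with the constant $\delta_0$) would hold for $E_{r_k}$ for all large $k$, and Theorem \ref{thm main} applied to $E$ at scale $r_k$ would contradict $x_0\in\Sigma_E$. Hence $0\in\Sigma_{E_\infty}$, so blow-ups of singular points are always singular $J_s$-minimal sets.

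\emph{Step 3 (dimension reduction).} Assume for contradiction that $\dim_{\mathcal H}(\Sigma_E)>n-2$, and choose $x_0\in\Sigma_E$ of positive upper $\mathcal H^{n-2+\eta}$-density in $\Sigma_E$ for some $\eta>0$. A standard density/Hausdorff-convergence lemma combined with Step 2 shows that, along an appropriate blow-up subsequence, $\Sigma_{E_{r_k}}$ converges in Hausdorff distance to a closed subset of $\Sigma_{E_\infty}$ of Hausdorff dimension at least $n-2+\eta$. This violates the $(n-2)$-dimensional bound on singular sets of $J_s$-minimal surfaces from \cite{CRS10}, completing the proof. The principal obstacle I anticipate is Step 1: one must verify precisely how the almost-minimality defect $\rho$ rescales and check that the passage to the limit in the non-local minimality inequality is not obstructed by the long-range tail of $J_s$. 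The hypotheses A1--A3 are surely engineered so that the rescaled moduli die in the limit, but combining this with stability of the non-local energy under $L^1_{\mathrm{loc}}$ convergence requires careful control of tail interactions, in the spirit of \cite{CRS10}.
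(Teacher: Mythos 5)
Your argument is correct in outline, but it is not quite the route the paper takes, so a comparison is in order. The paper reaches the $n-2$ bound through the machinery of Section 7: the monotonicity formula of Lemma \ref{lem monotonicity formula} (with the extra term $(n-s)\int_0^r\rho(t)t^{n-s-1}dt$ absorbing the almost-minimality defect), the resulting existence of tangent \emph{minimal cones} at every boundary point, the energy gap, and the characterization of regular points via half-space cones; it then runs the dimension reduction of \cite{CRS10} on these cones, using Theorem \ref{thm main} to conclude. You bypass the monotonicity formula and the tangent-cone analysis entirely: one blow-up already lands in the class of genuine $J_s$-minimizers (this is exactly Proposition \ref{thm compactness of sets} with $\rho_{r_k}\to 0$, plus the remark after it for $L^1$ compactness), singularity persists under the blow-up by Theorem \ref{thm main}, and the Hausdorff-content transfer lemma lets you import the final $n-2$ bound of \cite{CRS10} wholesale. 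What your shortcut buys is brevity; what the paper's route buys is the finer structure it needs anyway (tangent cones at every point, the energy gap, the criterion $\Phi_E(0)<\Phi_H+\delta_0$ for regularity, and the obstacle-problem application). Two points in your write-up deserve to be made precise. First, $L^1_{\mathrm{loc}}$ convergence alone does not transfer flatness: you need uniform convergence of the boundaries, which is exactly Corollary 4.2, a consequence of the density estimates of Proposition \ref{prop density} applied to the rescaled sets (these are available because $\rho_{r}(t)=\rho(rt)\le\rho(t)$, so the rescaled sets are $(J_s,\rho,\cdot)$-minimal and all constants, including the threshold $\delta_0$ of Theorem \ref{thm main}, are uniform in $r$). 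Second, in Step 3 the Hausdorff-distance statement should be replaced by the standard pair of facts: (i) a point of $\Sigma_E$ with $\mathcal{H}^t_\infty(\Sigma_E\cap B_{r_k}(x_0))\ge c\,r_k^t$ along some $r_k\downarrow 0$ exists whenever $\mathcal{H}^t_\infty(\Sigma_E)>0$, and (ii) upper semicontinuity of singular sets, i.e.\ your Step 2 argument applied at \emph{every} point of $\bar B_1$, which together with upper semicontinuity of $\mathcal{H}^t_\infty$ under this convergence yields $\mathcal{H}^t_\infty(\Sigma_{E_\infty}\cap\bar B_1)\ge c>0$. With these details supplied, your proof is complete.
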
}

For the definition of a $(J_s,\rho,\delta_0)$-minimal set and $\delta$-flatness, see Section 2.\\

The classical theory of almost minimal boundaries says that a set $E$ whose perimeter does not decrease too much under small perturbations has a smooth boundary, except perhaps for a lower dimensional set of singularities. This is known as the Almgren-De Giorgi-Tamanini regularity theory (see Chapter 4 of \cite{A76} or \cite{T84}, also the classical reference \cite{G84}). The term ``almost'' refers to the fact that the energy does not decrease too much under small perturbations. In the present paper we prove that a similar statement holds if instead of the perimeter we consider the energy associated to $H^{s/2}$, which is a non-local functional. It is worth mentioning also that the regularity theory for fully non-linear elliptic equations has been developed recently for non-local operators \cite{CS09}.\\

The proof of the above theorem  follows substantially the techniques in~\cite{CRS10} which are based on Savin's proof of regularity of minimal surfaces and level sets in phase transitions (see \cite{S07} and \cite{S09})\BlueComment{, they are also closely related to work of Caffarelli and C\'ordoba \cite{CC93}.}\\

To prove regularity of non-local minimal surfaces there are three basic steps \BlueComment{\cite{CC93,G84}}:  1) minimal surfaces have no cusps in measure (volume density estimates); 2) minimizers satisfy the minimal surface equation in the viscosity sense (Euler-Lagrange equation); 3) existence of tangent cones (monotonicity formula).\\

The main obstacles in carrying out this procedure for almost minimizers are extending the volume \BlueComment{density }estimates and finding a substitute for the Euler-Lagrange equation. We overcome these first by extending De Giorgi's differential inequality argument to non-local almost minimizers, complementing the discrete iteration argument used in \cite{CRS10} to get uniform volume density; secondly we show that the almost minimality of a set forces it to solve a variational inequality, this variational inequality (see Section 5) is the right substitute for the minimal surface equation. \\

Classically, almost minimal boundaries appear as solutions of a large class of geometric problems: prescribed mean curvature, obstacle problems, phase transitions (for instance, their regularity theory plays an important role in Luckhaus' work on the Stefan problem with Gibbs-Thomson law \cite{L90}). Accordingly, analogous applications arise in the non-local setting (e.g. prescribed non-local mean curvature). In forthcoming work, we shall use this regularity result to study the non-local mean curvature flow in the spirit of \cite{LS95}. The present result will also be used in the future to study the boundary behavior of non-local minimal surfaces.

\bigskip
\textbf{Remark.} As $s \to 1$ the energy $J_s$ (see Section 2) converges to the perimeter. Thus $E$ being $(J_1,\rho,\delta)$-minimal \BlueComment{means } $E$ is an almost minimal boundary in the classical sense. \BlueComment{It is an interesting question whether our estimates are uniform, giving perhaps a new proof of the Almgren-De Giorgi-Tamanini regularity theory for almost minimal boundaries. This issue will be addressed elsewhere.}\\

The article is organized as follows: In sections 2 and 3 we introduce the basic definitions and notation, in particular \BlueComment{the notion of almost minimal sets for non-local energy functionals}, and present several examples that justify this definition. In section 4 we prove the uniform volume estimate, which extends that of \cite{CRS10} to the almost minimal case.

The key steps of the proof Theorem \ref{thm main} are carried out in sections 5 and 6, where we prove the Euler-Lagrange inequalities (Theorem \ref{thm second Euler Lagrange}) and the Partial Harnack estimate (Theorem \ref{thm Harnack estimate}) \BlueComment{these imply an } improvement of flatness via compactness and blow up techniques. The strategy of the proof is discussed at greater length at the beginning of each of these sections.

Finally, in section 7 we prove a monotonicity formula and state without proof several propositions that imply by known methods the estimate on the singular set \BlueComment{(Theorem \ref{thm main2})}.

\section{Definitions and notation}

First, we define what it means for a set to be $\delta$-flat.

\begin{DEF}
A set $E \subset \mathbb{R}^n$ is said to be $\delta$-flat in $B_r(x_0)$ if there \BlueComment{is a } unit vector $e$ such that 
\begin{equation*}
\partial E \cap B_r(x_0) \subset \{ |(x-x_0) \cdot e| \leq \delta r \}.
\end{equation*}
\end{DEF}
\begin{DEF}
Given any two measurable sets $A,B \subset \mathbb{R}^n$, and $s \in (0,1)$, we shall write	
\begin{equation*}
L_s(A,B):=\int_{\mathbb{R}^n}\int_{\mathbb{R}^n}(1-s)\frac{\chi_A(x)\chi_B(y)}{|x-y|^{n+s}}dxdy.
\end{equation*}
\end{DEF}
Note that the integral above might take the value $+\infty$. It \BlueComment{easy } to see that if $E \subset \mathbb{R}^n$ \BlueComment{then}
\begin{equation*}
\|\chi_E\|_{H^{s/2}}^2 = c_{n,s}L_s(E,E^c).
\end{equation*}

\BlueComment{This quantity will play the part of a perimeter functional in all that follows, heuristically one can think about it as a sort of ``interfacial energy'', just as the perimeter measures the amount of interaction between different states near the interface (see \cite{CS10}) and the references therein}. With this in mind, given a domain $\Omega$ we consider a ``localized energy'' functional which measures the contribution of $\Omega$ to the $H^{s/2}$ norm of $\chi_E$.

\begin{DEF}
Let $n \geq 2$ and let $\Omega \subset \mathbb{R}^n$ be a bounded domain with Lipschitz boundary. Given $E \subset \Omega$, we shall denote by $J_s(E;\Omega)$ the quantity 
\begin{equation}
J_s(E;\Omega):=L_s(E \cap \Omega,E^c) + L_s(E \setminus \Omega, E^c \cap \Omega).
\end{equation}

\end{DEF}

We will from now on drop the $s$ subscript from $L_s(.,.)$ without causing too much confusion. We are ready to define the notion of almost minimality for the functional $J_s$, it extends the notion introduced by Almgren for the perimeter (see \cite{A76}).

\begin{DEF}

Let $s\in (0,1)$, $\delta>0$ and $\rho: (0,\delta) \to \mathbb{R}^+$. We say that a measurable set $E$ is $(J_s,\rho,\delta)$-minimal in $\Omega$ if for any $x_0 \in \partial E$, any set $F$ \BlueComment{and any $r$ such that $r<\min(\delta,d(x_0,\partial \Omega))$ and $F \Delta E \subset B_r(x_0)$ }we have
\begin{equation}
J_s(E;\Omega)\leq J_s(F;\Omega)+\rho(r)r^{n-s}.
\end{equation}

\end{DEF}

The assumptions we impose on the modulus of continuity $\rho$ are \BlueComment{standard, they can be found in  Almgren's Memoir \cite{A76} p. 96a and Tamanini's monograph \cite{T84} p. 9. They are as follows}: \\

A1) $\rho:(0,\delta) \to \mathbb{R}^+$ is a non decreasing, bounded function.

A2) $\rho(t)=o(1)$.

\BlueComment{A3) The function $t^{-s}\rho(t)$ is non increasing in $(0,\delta)$  and for some $m>n+s$ we have
\begin{equation}\label{eqn A3}
\int_0^\delta z^{-1}\left (\rho(z) \right )^{\frac{1}{m}}dz <+\infty.
\end{equation}

We emphasize that these are entirely analogous to the assumptions made for hypersurfaces of almost minimal perimeter. In particular, for any $C>0$ and $\alpha\in (0,s]$ the assumptions above are always satisfied by $\rho(t)= Ct^\alpha$. If a $\rho$ does not satisfy the first part of A3, this means $\rho(t) \leq Ct^s$ for all small enough $t$, thus any $(J_s,\rho,\delta)$-minimal boundary is also $(J_s,Ct^s,\delta)$-minimal, so our theory also applies to this case\footnote{One expects that for $\rho(t)=o(t^s)$ almost minimality reduces to minimality, see remark preceding Corollary \ref{cor pointwise Euler Lagrange}.}. The following auxiliary function will play a central role in our theory.\\

Remark. Throughout this work, we will implicitly assume $\delta \geq 1$ to simplify things. All arguments hold for arbitrary small $\delta>0$ with few modifications (and absolute constants will depend on $\delta$, of course). Moreover, by rescaling one can carry the result for $\delta>1$ to the general case.

\begin{DEF}\label{def rho hat}
Given $\rho$ satisfying the assumptions above, we define an auxiliary function $\hat\rho$.
\begin{equation}\label{eqn rho}
\hat\rho: (0,\delta)\to \mathbb{R}, \;\;\; \hat \rho(t):= \frac{s}{m}\int_0^t z^{-1}\rho(z)^{\frac{1}{m}}dz 
\end{equation}
Where $m$ is the same constant greater than $n+s$ which comes as part of assumption A3.
\end{DEF}

Remarks. Note that by A3 we have $\hat\rho(t) \to 0$ as $t \to 0$, and that $\hat\rho$ is a non-decreasing continuous function. As we will see in Section 6, the $C^1$ modulus of continuity in Theorem \ref{thm main} is actually $C^{1,\hat\rho}$. In light of this, when $\rho\leq Ct^\alpha$, Theorem \ref{thm main} says that $E$ has a $C^{1,\alpha'}$ boundary in $B_{1/2}$, where $\alpha'=\alpha/m$. The following estimate for $\hat\rho$ will be very important at several stages of the proof.

\begin{lem}\label{lem rho doubling}
Let $\rho$ and $\hat\rho$ be as above. Then we have the bound	
\begin{equation*}
\rho(t\epsilon)^{\frac{1}{m}}	\leq \hat \rho(t)\;\;\forall \;\epsilon \in (0,1),\; t \in (0,\delta).
\end{equation*}	
\end{lem}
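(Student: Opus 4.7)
The plan is to reduce the inequality to two steps: first control $\rho(t\epsilon)$ by $\rho(t)^{1/m}$ using monotonicity of $\rho$, and second show the pointwise bound $\rho(t)^{1/m} \leq \hat\rho(t)$ by exploiting the second half of assumption A3, namely that $t \mapsto t^{-s}\rho(t)$ is non-increasing.

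For the first step, since $\epsilon \in (0,1)$ and $\rho$ is non-decreasing (A1), we immediately get $\rho(t\epsilon) \leq \rho(t)$, hence $\rho(t\epsilon)^{1/m} \leq \rho(t)^{1/m}$. So the whole content of the lemma reduces to proving $\rho(t)^{1/m} \leq \hat\rho(t)$ for every $t \in (0,\delta)$.

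For the second step, I would use the non-increasing character of $z \mapsto z^{-s}\rho(z)$: for $0 < z \leq t$ this gives $\rho(z) \geq (z/t)^{s}\rho(t)$, and therefore $\rho(z)^{1/m} \geq (z/t)^{s/m}\rho(t)^{1/m}$. Plugging this into the definition of $\hat\rho$ gives
\begin{equation*}
\hat\rho(t) = \frac{s}{m}\int_0^t z^{-1}\rho(z)^{1/m}\, dz \;\geq\; \frac{s}{m}\rho(t)^{1/m} t^{-s/m} \int_0^t z^{s/m - 1}\, dz \;=\; \rho(t)^{1/m},
\end{equation*}
where the last equality uses $\int_0^t z^{s/m-1}\,dz = (m/s)\,t^{s/m}$. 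Combining with the monotonicity estimate from the first step yields the claim. The normalization constant $s/m$ in the definition of $\hat\rho$ (Definition \ref{def rho hat}) is precisely the factor that makes this inequality sharp (one can check that in the model case $\rho(t)=Ct^\alpha$ with $\alpha=s$, both sides match up to equality in the borderline configuration).

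There is no real obstacle here; the only subtlety is recognizing that assumption A3 is used in two essentially independent ways in the paper: the integrability part ensures $\hat\rho(t) \to 0$ as $t \to 0$, while the monotonicity part $t^{-s}\rho(t) \searrow$ is what powers the pointwise comparison above. Once that is observed, the proof is a one-line integral computation.
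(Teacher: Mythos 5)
Your proof is correct and rests on the same key idea as the paper: using the A3 monotonicity of $z^{-s}\rho(z)$ to bound $\rho(z)^{1/m}$ below by a power of $z$ and then integrating, with the constant $\frac{s}{m}$ making the computation close. The only (inessential) difference is that you first invoke A1 to replace $\rho(t\epsilon)$ by $\rho(t)$ and compare over all of $(0,t)$, whereas the paper keeps $\rho(t\epsilon)$ and restricts the integral to $(0,t\epsilon)$ before applying the same comparison.
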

\begin{proof}
By definition, 
\begin{equation*}
\frac{\hat\rho(t)}{\rho(t \epsilon)^{\frac{1}{m}}}=\frac{s}{m}\int_0^{t}\frac{\rho(z)^{\frac{1}{m}}}{z\rho(t\epsilon)^{\frac{1}{m}}}dz.
\end{equation*}	
Since the integrand is positive and $\epsilon \in (0,1)$,
\begin{equation*}
\frac{\hat\rho(t)}{\rho(t \epsilon)^{\frac{1}{m}}}\geq \frac{s}{m}\int_0^{t\epsilon}\frac{\rho(z)^{\frac{1}{m}}}{ z\rho(t\epsilon)^{\frac{1}{m}}}dz.
\end{equation*}	
Thanks to condition A3, we conclude
\begin{equation*}
\int_0^{t\epsilon}\frac{\rho(z)^{\frac{1}{m}}}{ z\rho(t\epsilon)^{\frac{1}{m}}}dz\geq\int_0^{t\epsilon}\left (\frac{z}{t\epsilon}\right )^{\frac{s}{m}}\frac{1}{z}dz.
\end{equation*}	
Thus,
\begin{equation*}
\frac{\hat\rho(t)}{\rho(t \epsilon)^{\frac{1}{m}}}\geq 	\frac{s}{m}(\epsilon t)^{-\frac{s}{m}}\int_0^{t\epsilon} z^{\frac{s}{m}-1}dz=(\epsilon t)^{-\frac{s}{m}}(t\epsilon)^{\frac{s}{m}}=1
\end{equation*}
Which shows that $\rho(t\epsilon)^{\frac{1}{m}} \leq \hat\rho(t)$ for all $\epsilon\in(0,1)$.
\end{proof}
}
To finish these preliminaries, we observe that, similarly to the case of a \BlueComment{non-local } minimal boundary, being $(J_s,\rho,\delta)$-minimal is equivalent to having the following two properties:\\

\emph{Supersolution property}: For any $x_0 \in \partial E$ and $A \subset E^c \cap B_r(x_0)$, $ r<\min(\delta,d(x_0,\partial \Omega))$ we have

\begin{equation}\label{eqn supersolution}
L(A,E)-L(A,E^c\setminus A) \leq \rho(r)r^{n-s}.
\end{equation}

\bigskip

\emph{Subsolution property}: For any $x_0 \in \partial E$ and $A \subset E \cap B_r(x_0)$, $ r<\min(\delta,d(x_0,\partial \Omega))$ we have

\begin{equation}\label{eqn subsolution}
L(A,E\setminus A)-L(A,E^c) \geq -\rho(r)r^{n-s}.
\end{equation}

\bigskip

To see the above equivalence, it is enough to check that for any set $F$, such that $F \Delta E \subset \subset \Omega$,
\begin{equation*}
J_s(F;\Omega)-J_s(E;\Omega) = L(A^-,A^+) 
\end{equation*}
\begin{equation*}
+\left ( L(A^-,E\setminus A^-)-L(A^-,E^c) \right )  - \left ( L(A^+,E)-L(A^+,E^c \setminus A^+) \right ),
\end{equation*}
\begin{equation*}
\mbox{ where } A^+ =  F \setminus E,\; A^- = E \setminus F.
\end{equation*}

In particular, we observe that
\begin{equation}\label{eqn J_B=J_Omega}
J_s(E;\Omega_1)-J_s(F;\Omega_1)=J_s(E;\Omega_2)-J_s(F;\Omega_2),
\end{equation}
\begin{equation*}
\mbox{whenever }  E \Delta F \subset \Omega_2 \subset \Omega_1.
\end{equation*}

Due to the \BlueComment{this, when the set $\Omega$ is clear from the context we may write } $J_s(E)$ instead of $J_s(E;\Omega)$.


\section{Examples}

Boundaries of almost minimal perimeter appear as solutions of various geometrical problems in the calculus of variations. In this section we discuss some of the analogous problems for the non-local energy, our presentation extends the discussion done \BlueComment{for the perimeter by Tamanini } \cite{T84}.\\

\textbf{Example 1. [Minimal boundaries]} Trivially, any non-local minimal \BlueComment{boundary } (i.e. a minimizer of $J_s$ among all sets that are prescribed outside $\Omega$) is \BlueComment{a $(J_s,0,\delta)$-minimal set}.

\bigskip

\textbf{Example 2. [Smooth Boundaries]} As it is the case for the perimeter, any set $E$ with a smooth boundary is $(J_s,\rho,\delta)$-minimal for $\rho$ and $\delta$ depending on the continuity of the normal to \BlueComment{$\partial E$.}

In particular, let $E \subset \mathbb{R}^n$ be the subgraph of a smooth function $u:\mathbb{R}^{n-1} \to \mathbb{R}$ which is globally bounded \BlueComment{and $C^{1,\alpha}$}, then $E$ is a $(J_s,Ct^\alpha,\delta)$-minimal set in any compact set of $\mathbb{R}^n$, for some $\alpha \in (0,s)$. Of course the constants $C>0$ and $\delta>0$ depend on $u$ and the compact set in question.

Let us prove this. First, note we may assume without loss of generality that the plane $\{x_n=0\}$ is tangent to $\partial E$ at $0$, so that $u(0)=0$, $\nabla u(0)=0$. Let $F$ be a measurable set such that $F \Delta E$ is contained in a ball of radius $r$ centered at some point, which we may also assume to be the origin. Then, thanks to (\ref{eqn J_B=J_Omega}) we have that 
\begin{equation}\label{eqn J_B=J_Omega two}
J_s(E;\Omega)-J_s(F;\Omega)=J_{s}(E;B_r)-J_{s}(F;B_r).
\end{equation}

Regardless of what the compact $\Omega$ is, as long as $B_r \subset \Omega$. On the other hand, under the regularity assumption of $u$ it is straightforward to check  $\partial E \cap B_r \subset \{ |x \cdot e_n | \leq Cr^{1+\alpha} \}$.\BlueComment{ In parallel to the perimeter, one may use a monotonicity formula (see Section 7) } to show that for any $F$ and small enough $r$ we have
\begin{equation*}
J_s(F;B_r) \geq \BlueComment{(1-Cr^\alpha)J_s(H;B_1)r^{n-s}},
\end{equation*}
where $H$ is the horizontal half-space going through the origin. Additionally, by the regularity of $u$ it can be checked (with a tedious but simple computation) that
\begin{equation*}
J_s(E;B_r)\leq r^{n-s}J_s(H;B_1) \left ( 1+  O(r^{\alpha}) \right ),
\end{equation*}
the last two inequalities imply that $E$ is then $(J_s,Ct^\alpha,\delta)$-minimal for some $\delta>0$ and $C>0$.\\

Let us sketch this last computation. It is convenient  to do the change of variables $x \to rx$, $E \to E_r$, we check easily that
\begin{equation*}
J_s(E;B_r)=r^{n-s}J_s(E_r;B_1),
\end{equation*}
where $E_r$ is now the subgraph of the function $u_r(x)=r^{-1}u(rx)$. To compute $J_s(E_r;B_1)$ we are going to change from the variables $(x,y)$ to the variables $(v,w)$, as follows:
\begin{equation}
x=T_r(v)=v+u_r(v')e_n, \; \; y=T_r(w)=w+u_r(w')e_n.
\end{equation}

In other words, $T_r$ flattens the boundary of the set $E_r$ down to the half space  $H=\{x_n \leq 0\}$. It can be easily checked that
\begin{equation*}
DT_r= I + \nabla u_r \otimes e_n, \; det(DT_r)=1.
\end{equation*}

Therefore
\begin{equation*}
J_s(E_r;B_1)=L(E_r \cap B_1,E_r^c)+L(E_r \setminus B_1, E_r^c \cap B_1)=
\end{equation*}

\begin{equation*}
=\int_{E_r^c} \int_{E_r \cap B_1}\frac{1}{|x-y|^{n+s}}dxdy+\int_{E_r^c \cap B_1} \int_{E_r \setminus B_1}\frac{1}{|x-y|^{n+s}}dxdy
\end{equation*}

\begin{equation*}
=\int_{H^c} \int_{H \cap T_r(B_1)} \frac{1}{|T_r(v)-T_r(w)|^{n+s}}dvdw+\int_{H^c \cap T_r(B_1)} \int_{H \setminus T_r(B_1)}\frac{1}{|T_r(v)-T_r(w)|^{n+s}}dvdw
\end{equation*}

\begin{equation*}
=\int_{H^c} \int_{H \cap T_r(B_1)} \frac{A(v,w,r)}{|v-w|^{n+s}}dvdw+\int_{H^c \cap T_r(B_1)} \int_{H \setminus T_r(B_1)}\frac{A(v,w,r)}{|v-w|^{n+s}}dvdw.
\end{equation*}

Where
\begin{equation*}
A(v,w,r)=\frac{|v-w|^{n+s}}{|T_r(v)-T_r(w)|^{n+s}}\leq 1+\BlueComment{C}\frac{|u_r(v')-u_r(w')|}{|v-w|}.
\end{equation*}

\BlueComment{For a $C>0$ depending on the bounds on $u$. } From here, it is not hard to see\BlueComment{ that}
\begin{equation*}
J_s(E;B_r)\leq r^{n-s}(1+Cr^\alpha)J_s(H;B_1),
\end{equation*} 
just as we wanted.\\

\textbf{Example 3. [Boundaries of least energy lying above an obstacle]} Let $E \subset \mathbb{R}^n$ solve the obstacle problem in $\Omega$ with respect to a set $L$, i.e. it minimizes $J_s(.)$ among all sets containing $L \cap \Omega$ and agreeing with a given fixed set outside $\Omega$ that contains $L \setminus \Omega$. The set $L$ is known as the obstacle and the set $\partial E \cap L$ is known as the contact set. 	If $L$ is $(J_s,\rho,\delta)$-minimal, then so is $E$.

To see this, let $F$ be a set such that $E \Delta F \subset B_r(x_0) \subset \Omega$. Since $F \cup L$ contains $L$ and agrees with $E$ outside $\Omega$ we see that $F \cup L$ is admissible, therefore by the assumption on $E$
\begin{equation*}
J_s(E)\leq J_s(F \cup L).
\end{equation*}
Now, one may check easily from the definition of $J_s$ that
\begin{equation*}
J_s(F \cup L) + J_s(F \cap L) \leq J_s(F) + J_s(L),
\end{equation*}
therefore
\begin{equation*}
J_s(E) -J_s(F) \leq J_s(L)-J_s(F \cap L) \leq \rho(r)r^{n-s},
\end{equation*}
where the last inequality follows from $E \Delta F \subset B_r(x_0)$ and the assumption on $L$.\\

\textbf{Remark}. Later we will see that near the contact set we can always apply Theorem \ref{thm main}, and thus $\partial E$ is always $C^1$ near the obstacle $L$ as long as $L$ is smooth enough.

\bigskip

\textbf{Example 4. [Boundaries with prescribed non-local mean curvature]} Another important class of examples of almost minimal boundaries consists of the minima of functionals of the form $\mbox{Per}(E)+\int_E \gamma(x)dx$. These appear for instance in phase transition problems where the mean curvature of the interface is related to the pressure or the temperature on the interface. It can be shown that the mean curvature of such minima (understood in a weak sense) is given by $\gamma(x)$, when instead of $\mbox{Per}(E)$ we consider the $J_s(E)$ energy what happens is that the non-local mean curvature of $E$ must agree \BlueComment{with $\gamma$.}

Consider the functional $\mathcal{F}(E)=J_s(E)+\int_E \gamma(x)dx$ with $\gamma(x) \in L^p$ ($p>n/s$). Suppose $E_0$ is a minimum of $\mathcal{F}$ among all sets $E$ that agree with $E_0$ outside $\Omega$, then $E_0$ is $(J_s,Ct^\alpha,\delta)$-minimal in $\Omega$, with $\alpha = s-\frac{n}{p}$.

To see $E_0$ is almost minimal, let $F$ be another set, if $E \Delta F$ lies in a small enough neighborhood of $E$, then $\mathcal{F}(E)\leq \mathcal{F}(F)$, which gives

\begin{equation*}
J_s(E) + \int_E\gamma(x)dx \leq J_s(F) +\int_F\gamma(x)dx
\end{equation*}

\begin{equation*}
J_s(E)-J_s(F) \leq \int_F\gamma(x)dx-\int_E\gamma(x)dx  \leq \int_{E \Delta F}|\gamma(x)|dx
\end{equation*}

by H\"older inequality, we get $J_s(E)-J_s(F) \leq |E \Delta F|^{1-\frac{1}{p}}\left ( \int_{E\Delta F} |\gamma(x)|^pdx\right )^{\frac{1}{p}}$.\\

Therefore, if $E \Delta F \subset B_r(x_0)$ for some point $x_0$ and some small $r$, we get

\begin{equation*}
J_s(E)-J_s(F) \leq C_n\rho_\gamma (r) r^{n-s}
\end{equation*}

where $\rho_\gamma(r) \leq  C_n r^{s-\frac{n}{p}} \| \gamma \|_{L^p}$. 

\section{\BlueComment{First estimates: almost minimal boundaries have no cusps.}}

The first step in the regularity theory of minimal surfaces consists in showing that if $x_0 \in \partial E$ and $E$ is minimal, then $\partial E$ separates any small ball centered at $x_0$ in two regions of comparable volume, so that at least heuristically $\partial E$ should not have cusps. In this section we prove such a volume estimate for non-local almost minimal boundaries.

\begin{prop}\label{prop density}
Let $E$ have the subsolution (supersolution) property in $\Omega$ and $x_0 \in \partial E \cap \Omega$, there are constants $r_0=r_0(n,s,\rho)$ and $c_0=c_0(n,s,\rho)$ such that for any $x \in \partial E$ and any $r \leq \min( r_0, d(x_0,\partial \Omega) )$ we have
\begin{equation*}
|E \cap B_r(x)| \geq c_0|B_r|, \;\; ( \mbox{resp. }  |E^c \cap B_r(x)| \geq c_0|B_r| \;).
\end{equation*}
In particular, if $E$ is $(J_s,\rho,\delta)$-minimal we have 
\begin{equation*}
c_0 \leq \frac{|E \cap B_r(x_0)|}{|B_r(x_0)|}\leq 1-c_0.
\end{equation*}
\end{prop}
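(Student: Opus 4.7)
My plan is to adapt De~Giorgi's differential-inequality approach to density estimates into the non-local almost-minimal setting, combining it with a dyadic iteration to absorb the error $\rho(r) r^{n-s}$. By symmetry (swap $E$ and $E^c$ and the subsolution/supersolution properties), it suffices to prove the lower bound on $|E \cap B_r(x)|$. Translate so $x = 0$ and set $V(r) := |E \cap B_r|$.

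The first step is to compare $E$ against the competitor $F = E \setminus B_r$. With $A^- = E \cap B_r$, $A^+ = \emptyset$ in the identity of Section~2, the subsolution property (\ref{eqn subsolution}) becomes
$$L(E \cap B_r, E^c) \leq L(E \cap B_r, E \setminus B_r) + \rho(r) r^{n-s}.$$
Combining this with the fractional isoperimetric inequality applied to $A = E \cap B_r$,
$$c(n,s)\, V(r)^{(n-s)/n} \leq L(A, A^c) = L(E \cap B_r, E^c) + L(E \cap B_r, E \setminus B_r),$$
yields the key estimate
$$c(n,s)\, V(r)^{(n-s)/n} \leq 2\, L(E \cap B_r, E \setminus B_r) + \rho(r) r^{n-s}. \qquad (\ast)$$

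The second step---the non-local analog of ``the perimeter of $E$ inside $B_r$ is at most $V'(r)$''---is to bound the interaction $L(E \cap B_r, E \setminus B_r)$ in terms of $V$. Decomposing the outer region into dyadic annuli $B_{2^{k+1}r} \setminus B_{2^k r}$ and using the coarea formula in the radial direction, one obtains either a pointwise a.e.\ bound $L(E \cap B_r, E \setminus B_r) \leq C(n,s)\, r^{1-s}\, V'(r) + C(n,s)\, r^{-s} V(r)$, or, after integrating $(\ast)$ over $r \in [R, 2R]$, an integral inequality of the form
$$\int_R^{2R} V(r)^{(n-s)/n}\, dr \leq C\bigl[V(2R)-V(R)\bigr] + C\rho(2R)\, R^{n-s+1}.$$
Substituted back into $(\ast)$, this gives a recursion that at dyadic scales $r_k = 2^{-k} r_0$ and $V_k = V(r_k)$ takes the form $V_k^{(n-s)/n} \leq C(V_{k-1}-V_k) r_k^{-s} + C\rho(r_k)\, r_k^{n-s}$.

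I would then conclude via the dichotomy used in~\cite{CRS10}: at each scale either $V_k \geq c_0 r_k^n$ (the desired conclusion), or else $V_k \leq \eta\, V_{k-1}$ for some universal $\eta < 1$. The summability provided by assumption~A3 on $\rho$ is precisely what prevents the second alternative from persisting indefinitely, so the first must hold for every $r \leq r_0(n,s,\rho)$. The main obstacle is the ``boundary-term'' step: in the local case one has an exact identity between $L(E \cap B_r, E \setminus B_r)$ and $V'(r)$ via the coarea formula, whereas in the non-local case the kernel $|x-y|^{-n-s}$ is singular as $|x-y| \to 0$, so the contributions from $x$ near $\partial B_r$ must be localized carefully to retain the correct scaling $r^{1-s}$ and not produce a divergent integral.
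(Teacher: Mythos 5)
Your overall skeleton coincides with the paper's: the fractional Sobolev (isoperimetric) inequality applied to $\chi_{E\cap B_r}$, the comparison with $F=E\setminus B_r$ via the subsolution property (\ref{eqn subsolution}), and your inequality $(\ast)$ is exactly the paper's starting point; the paper then runs De Giorgi's argument in continuous form, arriving at the integrated inequality (\ref{eqn density ode}). The execution of your second step, however, has concrete problems. The claimed pointwise a.e.\ bound $L(E\cap B_r,E\setminus B_r)\leq Cr^{1-s}V'(r)+Cr^{-s}V(r)$ is false as a general measure-theoretic statement: bounding the interaction by $L(E\cap B_r,B_r^c)$ and using $\int_{B_r^c}|x-y|^{-n-s}dy\leq C(r-|x|)^{-s}$ gives the weighted integral $C\int_0^r V'(z)(r-z)^{-s}dz$, whose mass concentrates near $z=r$ and cannot be controlled by the single value $V'(r)$ (take $E$ to contain a thin annulus $\{r-2h<|x|<r-h\}$ and another just outside $B_r$: the interaction is of order $r^{n-1}h^{1-s}$ while $V'(r)=0$ and $r^{-s}V(r)\sim r^{n-1-s}h$). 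The integrated alternative is also mis-stated: integrating over $r\in[R,2R]$ and exchanging the order of integration yields $\int_R^{2R}V(r)^{\frac{n-s}{n}}dr\leq CR^{1-s}V(2R)+C\int_R^{2R}\rho(r)r^{n-s}dr$, i.e.\ with the factor $R^{1-s}$ (which your version omits, making it dimensionally inconsistent) and with $V(2R)$ rather than the difference $V(2R)-V(R)$; extracting a difference requires splitting the weighted integral at $z=R$ and absorbing a leftover term of order $R^{1-s}V(R)$, which you do not do. Note also that the recursion you then write, with the (correct) factor $r_k^{-s}$, does not follow from the integral inequality you stated, which would only give $r_k^{-1}$.

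The more serious gap is the endgame. When $V_k$ is very small the error can dominate: from $V_k^{(n-s)/n}\leq Cr_k^{-s}(V_{k-1}-V_k)+C\rho(r_k)r_k^{n-s}$, if $\rho(r_k)r_k^{n-s}\gtrsim V_k^{(n-s)/n}$ then neither alternative of your dichotomy (``$V_k\geq c_0r_k^n$'' or ``$V_k\leq\eta V_{k-1}$'') follows, so the iteration can stall precisely in the regime you need to exclude. Moreover, even granting the dichotomy, ``the bad alternative cannot persist indefinitely'' is a non sequitur: the proposition asserts the lower bound at \emph{every} scale $r\leq r_0$, so you must show the bad alternative never occurs below $r_0$ (or that it contradicts $x\in\partial E$). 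The paper handles both issues at once by comparing $f(r)=|E\cap B_r(x)|$ in (\ref{eqn density ode}) with the explicit barrier $g(t)=C_0t^n-\int_0^t\rho(z)z^{n-1}dz$, which satisfies the reverse inequality (\ref{eqn density ode two}), concluding $f\geq g\geq c_0t^n$ for $t\leq r_0$; what makes the error harmless there is merely $\int_0^t\rho(z)z^{n-1}dz\leq\rho(t)t^n/n=o(t^n)$, not the summability of $z^{-1}\rho(z)^{1/m}$ from A3 that you invoke, which plays no role in this proposition. A discrete CRS10-style iteration can indeed be made to work (the paper remarks that the two arguments are in the same spirit), but it needs the corrected annular bound on the interaction term and an error-tracking device in the spirit of the paper's remark $|E\cap B_r(x)|\geq c(n,s)r^n-\int_0^r\rho(z)z^{n-1}dz$, rather than the dichotomy as you stated it.
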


\textbf{Remark}. The proof is done following De Giorgi's differential inequality argument, which is also used to prove uniform volume estimates for minimal surfaces (cf. section 2.8 in \cite{dG06} for further discussion), it exploits the two inequalities available to us, namely Sobolev's embedding and the subsolution (supersolution) property of $E$. Our proof is slightly different from the one used in \cite{CRS10} where a discrete version is used, both are nonetheless in the same spirit.

\begin{proof}
We will prove the first of the two inequalities, the second one is obtained \emph{mutatis mutandis}.  For $r\in(0,1)$, set $A_r:=E\cap B_r$ and consider the function
\begin{equation*}
f(r):=|A_r|,\qquad f'(r):=\mathcal {H}^{n-1}(E\cap \partial B_r).
\end{equation*}

\BlueComment{The identity for $f'$ is standard and follows from the co-area formula. }By the Sobolev inequality, 
\begin{equation*}
\|u\|_{L^p}\leq C_{n,s}\| u\|_{H^{\frac{s}{2}}},\quad p=\frac{2n}{n-s},
\end{equation*}
for $u=\chi_{E\cap B_r}$, we obtain
\begin{equation*}
f(r)^{\frac{n-s}{n}}\leq C L(A_r, A_r^c).
\end{equation*}

Since $E$ has the subsolution property, and $A_r \subset \Omega \cap E \cap B_r(x_0)$,
\begin{equation*}
 L(A_r,E^c) \leq L(A_r,E\setminus A_r)+\rho(r)r^{n-s},
\end{equation*}
so, 
\begin{equation*}
L(A_r,A_r^c)=L(A_r,E^c)+L(A_r,E\setminus A_r)
\end{equation*}
\begin{equation*}
\leq 2 L(A_r,E\setminus A_r)+\rho(r)r^{n-s}\leq 2 L(A_r, B_r(x_0)^c)+\rho(r)r^{n-s}
\end{equation*}

To estimate the $L(A_r,B_r(x_0)^c)$ term, we note that for all $x \in A_r$ we have the bound
\begin{equation*}
\int_{B_r^c}\frac{1}{|x-y|^{n+s}}dy \leq C_{n,s}\int_{r-|x|}^{\infty} \frac{1}{z^{n+s} }z^{n-1} dz \leq C_{n,s}(r-|x|)^{-s},
\end{equation*}
this yields to
\begin{equation*}
L(A_r, B_r^c)=\int\int\frac{\chi_{A_r}(x)\chi_{B_r^c}(y)}{|x-y|^{n-s}}dx dy\leq C_{n,s}\int_0^r f'(z)(r-z)^{-s} dz.
\end{equation*}

Hence, \BlueComment{we are led to the integro-differential inequality,}
\begin{equation*}\label{eqn density ide}
f(r)^{\frac{n-s}{n}}\leq C_{n,s}\left ( \int_0^r  f'(z)(r-z)^{-s}dz+\rho(r)r^{n-s} \right ).
\end{equation*}

Since we have $f'$ on the right hand side, we may integrate the last inequality in the $r$ variable on the interval $(0,t)$ and get
\begin{equation*}
\int_{0}^{t}\, f(r)^{\frac{n-s}{n}}dr \leq C_{n,s} \left ( \int_{0}^{t} \int_0^r  f'(z)(r-z)^{-s}dz dr + \int_{0}^{t}\rho(r)r^{n-s}dr \right ),
\end{equation*}
interchanging the order of integration,
\begin{equation*}
\int_{0}^{t} \int_0^r  f'(z)(r-z)^{-s}dz dr=\int_{0}^{t}f'(z) \int_{z}^{t}(r-z)^{-s}dr dz,
\end{equation*}
we get the integro-differential inequality,
\begin{equation}\label{eqn density ode}
\int_{0}^{t}\, f(r)^{\frac{n-s}{n}}dr \leq C_{n,s}t^{1-s} f(t)+\int_0^t\rho(r)r^{n-s}dr.
\end{equation}

Let $g(t)=C_0t^n-\int_0^t\rho(r)r^{n-1}dr$, with a conveniently chosen $C_0(n,s)$. It is not hard to check that then we have (with the same $C_{n,s}$ as in (\ref{eqn density ode})) 
\begin{equation}\label{eqn density ode two}
\int_0^tg(r)^{\frac{n-s}{n}}dr \geq 2C_{n,s}t^{1-s}g(t)+2\int_0^t\rho(r)r^{n-s}dr\;\;\BlueComment{\forall\; t\leq r_0(n,s,\rho)}
\end{equation}

\emph{Claim.} From the integro-differential inequalities (\ref{eqn density ode}) and (\ref{eqn density ode two}) \BlueComment{(or equivalently (\ref{eqn density ide}) and (\ref{eqn density ode two})) } we conclude that   $f(r) \geq g(r)$ for all $r$ such that $B_r(x_0) \subset \Omega$.\\ 

If this claim is true then we get the lower bound on $f$, since due to property \BlueComment{A3 } of $\rho$ there are $c_0=c_0(n,s,\rho)$ and $r_0=r_0(n,s,\rho)$ such that $g(r) \geq c_0r^n$ whenever $r \leq r_0$.\\

\emph{Proof of the claim.} We argue by contradiction as when dealing with differential inequalities: let $r_1$ be the supremum of the set $\{r:f(t)\geq g(t)\;\; \forall t \leq r \}$, assume that the claim was not true in which case $r_1<\min\{r_0,d(x_0,\partial \Omega)\}$, then for a sequence $h_k \to 0$, $h_k >0$ we have thanks to (\ref{eqn density ode}) and (\ref{eqn density ode two}) that
\begin{equation*}
g(r_1+h_k)>f(r_1+h_k)\geq Ct^{s-1}\int_{r_1}^{r_1+h_k}f(r)^{(n-s)/n}-\rho(r)r^{n-s}dr+2g(r_1)
\end{equation*}
\begin{equation*}
\Rightarrow \liminf g(r_1+h_k) > 2g(r_1),
\end{equation*}
which cannot be since $g$ is continuous, and this proves the claim.
\end{proof}

\BlueComment{
\begin{rem} We have in fact proved something stronger than what the statement of Proposition \ref{prop density} says: under the same hypothesis as before, there is a constant $r_0=r_0(n,s,\rho)$ such that for any $x \in \partial E$ and any $r \leq \min( r_0, d(x_0,\partial \Omega) )$ we have

\begin{equation*}
|E \cap B_r(x)|\geq c(n,s)r^n-\int_0^r\rho(z)z^{n-1}dz,
\end{equation*}
\begin{equation*}
( \mbox{resp. }  |E^c \cap B_r(x)| \geq c(n,s)r^n-\int_0^r\rho(r)z^{n-1}dz\;).	
\end{equation*}

\end{rem}

The volume density estimate implies (by a standard argument) that $L^1$ convergence of almost minimal boundaries $\{E_k\}$ guarantees their uniform convergence, this will be useful in the last section when we consider blow ups and tangent cones so we state it now as a corollary.

\begin{cor}
Let $\{ E_k\}$ be a sequence of $(J_s,\rho(.),\delta)$-minimal sets such that for some measurable set $E$ we have
\begin{equation*}
E_k \to  E \; \mbox { in } L^1_{loc}(\mathbb{R}^n)
\end{equation*}
then $E_k \to E$ uniformly in compact subsets of $\Omega$.
\end{cor}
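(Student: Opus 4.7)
The claim amounts to Hausdorff convergence of $\partial E_k$ to $\partial E$ on compact subsets of $\Omega$. My plan is to combine the uniform volume density estimate of Proposition~\ref{prop density} with the $L^1_{\text{loc}}$ hypothesis via a contradiction argument: if a point of $\partial E_k$ is far from $\partial E$ (or vice versa), the density lower bound forces the symmetric difference $E_k\Delta E$ to have positive measure on a definite ball, contradicting $L^1_{\text{loc}}$ convergence.

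Fix a compact $K\Subset\Omega$ and set $\epsilon_0=\tfrac{1}{2}\min(r_0,d(K,\partial\Omega))$, where $r_0,c_0$ are the constants from Proposition~\ref{prop density}. First I would rule out points $x_k\in\partial E_k\cap K$ with $d(x_k,\partial E)\geq\epsilon$ for some fixed $\epsilon\in(0,\epsilon_0)$: by compactness, extract $x_k\to x_\infty\in K$, and for $k$ large $B_{\epsilon/2}(x_k)\subset B_\epsilon(x_\infty)$ while $B_\epsilon(x_\infty)\cap\partial E=\emptyset$. A connectedness argument applied to the measure-theoretic representative of $E$ then forces $B_\epsilon(x_\infty)$ to lie, modulo a null set, entirely in $E$ or entirely in $E^c$; say in $E$. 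But Proposition~\ref{prop density} applied to $E_k$ at $x_k$ with radius $\epsilon/2<r_0$ gives $|E_k^c\cap B_{\epsilon/2}(x_k)|\geq c_0|B_{\epsilon/2}|$, hence
\[
|(E\Delta E_k)\cap B_\epsilon(x_\infty)|\geq c_0|B_{\epsilon/2}|,
\]
contradicting $L^1_{\text{loc}}$ convergence on the fixed ball $B_\epsilon(x_\infty)\Subset\Omega$.

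For the reverse inclusion --- that $\partial E\cap K$ lies eventually in any $\epsilon$-neighborhood of $\partial E_k$ --- one needs the density estimate to hold for $E$ itself. I would first establish that $(J_s,\rho,\delta)$-minimality is preserved under $L^1_{\text{loc}}$ limits: $J_s(\,\cdot\,;\Omega)$ is lower semicontinuous along such sequences (Fatou applied to the double integral defining $L_s$), and for any admissible competitor $F$ of $E$ with $F\Delta E\Subset B_r(x_0)$, the surgery $F_k:=(F\cap B_r(x_0))\cup(E_k\setminus B_r(x_0))$ produces competitors for $E_k$ whose energies converge to $J_s(F;\Omega)$ by dominated convergence. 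Passing to the limit in the almost minimality inequality for $E_k$ then yields the same inequality for $E$. With this in hand, Proposition~\ref{prop density} applies to $E$ as well, and the symmetric version of the previous contradiction argument --- starting from $x_k\in\partial E\cap K$ with $d(x_k,\partial E_k)\geq\epsilon$ and using the density estimate for $E$ against $L^1_{\text{loc}}$ convergence --- finishes the proof.

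The only genuine obstacle is the stability step that carries $(J_s,\rho,\delta)$-minimality through the $L^1_{\text{loc}}$ limit, so that the density estimate is available on both sides; once that is settled, the uniform convergence is a mechanical consequence of the density lower bound and $L^1$ convergence.
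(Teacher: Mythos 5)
Your argument is correct, and its first half is exactly the ``standard argument'' the paper alludes to (the paper offers no written proof): uniform density estimates for the $E_k$ from Proposition \ref{prop density} plus $L^1_{loc}$ convergence force $\partial E_k\cap K$ into any $\epsilon$-neighborhood of $\partial E$. Where you diverge is the opposite inclusion: you route it through a stability statement (almost minimality passes to $L^1_{loc}$ limits) in order to have density estimates for the limit set $E$. That statement is available --- it is precisely Proposition \ref{thm compactness of sets}, stated immediately after the corollary and proved independently of it (as in Theorem 3.3 of \cite{CRS10}), so there is no circularity --- but it is heavier than needed, and your one-line justification of it by ``dominated convergence'' is the one thin spot: for the nonlocal energy the glued competitor $F_k=(F\cap B_r)\cup(E_k\setminus B_r)$ creates cross-interaction terms concentrated near $\partial B_r$ where the kernel is singular, and one must select a good gluing radius (and a perturbation ball centered essentially on $\partial E_k$) to control them; if you use this step, cite the proposition rather than reprove it. Alternatively, the second inclusion needs no minimality of $E$ at all: with the standard normalization of the representative of $E$ (so that $x\in\partial E$ means $0<|E\cap B_r(x)|<|B_r|$ for every $r$), a point $x_k\in\partial E\cap K$ with $d(x_k,\partial E_k)\geq\epsilon$ gives a fixed ball in which both $|E\cap B|$ and $|E^c\cap B|$ are positive while $B$ lies a.e.\ in $E_k$ or in $E_k^c$, again contradicting $L^1_{loc}$ convergence. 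So each approach buys something: yours yields the stronger package (density bounds for the limit, used later anyway for blow-ups), while the soft representative argument keeps the corollary self-contained and independent of the compactness proposition.
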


We end this section with a compactness estimate , the proof is the same as that of Theorem 3.3 in \cite{CRS10} and it relies on the lower-semicontinuity of $J_s$ with respect to $L^1$ convergence.}

\begin{prop}\label{thm compactness of sets}
Suppose that for each $k >0$ the set $E_k$ is $(J_s,\rho_k,\delta)$-minimal in $\Omega$, assume also that $\rho_k(t) \to \rho(t)$ (pointwise) and $\{ \rho_k\}$ satisfies the assumptions A1-A4 uniformly in $k$. Then if $\{E_k\}$ converges in $L^1_{loc}(\Omega)$ to a set $E$, this set must be $(J_s,\rho,\delta)$-minimal in $\Omega$.
\end{prop}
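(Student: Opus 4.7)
My plan is to verify the defining almost minimality inequality for $E$ directly. Fix any $x_0\in\partial E$ and any competitor $F$ with $F\Delta E\subset B_r(x_0)$ and $r<\min(\delta,d(x_0,\partial\Omega))$. I want to pass to the limit in the inequality
\[
J_s(E_k;\Omega)\leq J_s(F_k;\Omega)+\rho_k(r_k)r_k^{n-s}
\]
for a well-chosen sequence of competitors $F_k$ and radii $r_k$. The three ingredients will be: (i) lower semicontinuity of $J_s(\,\cdot\,;\Omega)$ under $L^1_{\mathrm{loc}}$ convergence; (ii) continuity of $J_s(\,\cdot\,;\Omega)$ along the specific sequence $F_k$; and (iii) the uniform convergence of $\partial E_k$ to $\partial E$ on compact sets, guaranteed by the Corollary following the density estimate.

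More precisely, first pick $r'\in\bigl(r,\min(\delta,d(x_0,\partial\Omega))\bigr)$. By uniform convergence of boundaries, for all $k$ large there is $x_k\in\partial E_k$ with $x_k\to x_0$. Set
\[
F_k:=(F\cap B_{r'}(x_0))\cup(E_k\setminus B_{r'}(x_0)),
\]
so that $F_k\Delta E_k\subset B_{r'}(x_0)\subset B_{r_k}(x_k)$ with $r_k:=r'+|x_k-x_0|\to r'$, and for $k$ large $r_k$ still lies below $\min(\delta,d(x_k,\partial\Omega))$. Since $F=E$ on $B_{r'}(x_0)\setminus B_r(x_0)$, one has $F_k=F$ on $B_{r'}(x_0)$ and $F_k=E_k$ elsewhere, so $F_k\to F$ in $L^1_{\mathrm{loc}}$. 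Applying the $(J_s,\rho_k,\delta)$-minimality of $E_k$ at $x_k$ with competitor $F_k$ gives
\[
J_s(E_k;\Omega)\leq J_s(F_k;\Omega)+\rho_k(r_k)r_k^{n-s}.
\]

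To conclude, I send $k\to\infty$ and then $r'\to r$. The left side is handled by lower semicontinuity: $J_s(E;\Omega)\leq\liminf_k J_s(E_k;\Omega)$, which is an immediate consequence of Fatou's lemma applied to the non-negative kernel $(1-s)|x-y|^{-n-s}$ and pointwise (up to subsequences, a.e.) convergence of the characteristic functions. For the right side, the pointwise convergence $\rho_k(r_k)\to\rho(r')$ follows from the pointwise convergence $\rho_k\to\rho$ together with the uniform monotonicity of $\rho_k$ in A1 (so the evaluation at the moving argument $r_k$ is harmless at any continuity point of $\rho$, and by monotonicity such points are dense; an $r'$ slightly larger than $r$ can be chosen to be one). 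For $J_s(F_k;\Omega)\to J_s(F;\Omega)$, I split the double integral into the pieces involving $B_{r'}(x_0)$ and its complement; the only $k$-dependent contributions are interactions of fixed sets with $E_k\setminus B_{r'}(x_0)$ or $E_k^c\setminus B_{r'}(x_0)$, and $L^1_{\mathrm{loc}}$ convergence of $E_k$ plus an integrable far-field tail (because the kernel has a fixed singularity only near the interface inside $B_{r'}$ where $F_k\equiv F$) allows dominated convergence. Thus
\[
J_s(E;\Omega)\leq J_s(F;\Omega)+\rho(r')(r')^{n-s},
\]
and finally $r'\searrow r$ together with continuity of $\rho$ from the right (again via A1 and the density of continuity points, or by first picking $r'$ within such points) yields the desired inequality.

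The step I expect to be most delicate is the continuity $J_s(F_k;\Omega)\to J_s(F;\Omega)$: the non-locality of $J_s$ means one must control long-range interactions as $E_k$ varies, and one has to separate the bounded region $B_{r'}(x_0)$ where the sets agree identically from the complement where only $L^1_{\mathrm{loc}}$ convergence is available. The safe way to do this is to freeze the inner region (write $J_s(F_k;\Omega)=J_s(F;B_{r'}(x_0))+R_k$ where $R_k$ is a remainder involving interactions across $\partial B_{r'}(x_0)$ whose integrand is dominated by an $L^1$ function independent of $k$, since the kernel is bounded away from the singularity on the relevant product set) and invoke dominated convergence. The rest is bookkeeping.
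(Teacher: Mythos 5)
Your overall strategy --- glue the competitor $F_k=(F\cap B_{r'}(x_0))\cup(E_k\setminus B_{r'}(x_0))$, apply the almost minimality of $E_k$ at nearby boundary points $x_k$, use lower semicontinuity on the left --- is the same one behind the proof the paper points to (Theorem 3.3 of \cite{CRS10}), but the step you yourself flag as delicate contains a genuine gap. The claim that in $J_s(F_k;\Omega)$ the only $k$-dependent contributions are interactions of fixed sets with $E_k\setminus B_{r'}$ or $E_k^c\setminus B_{r'}$ is false: since $F_k$ coincides with $E_k$ on $\Omega\setminus B_{r'}$, the energy $J_s(F_k;\Omega)$ also contains the self-interaction terms $L\bigl(E_k\cap(\Omega\setminus B_{r'}),E_k^c\setminus B_{r'}\bigr)$ and $L\bigl(E_k\setminus\Omega,E_k^c\cap(\Omega\setminus B_{r'})\bigr)$, in which \emph{both} arguments vary with $k$. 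These are exactly the terms that also sit inside $J_s(E_k;\Omega)$; under $L^1_{loc}$ convergence they are only lower semicontinuous (think of $E_k$ obtained from $E$ by removing many tiny balls of vanishing total volume but non-vanishing $H^{s/2}$ energy), they need not even be finite without a separate local energy bound, and they need not converge to the corresponding quantities for $E$. Hence ``$J_s(F_k;\Omega)\to J_s(F;\Omega)$'' cannot be obtained by dominated convergence, and taking limits separately on the two sides of the minimality inequality destroys precisely the cancellation that makes the argument work. The repair is to cancel these common terms \emph{before} passing to the limit: by identity (\ref{eqn J_B=J_Omega}) (and the finiteness of the localized energies, which for almost minimizers follows from the comparison argument of \cite{CRS10}) the inequality $J_s(E_k;\Omega)\le J_s(F_k;\Omega)+\rho_k(r_k)r_k^{n-s}$ is equivalent to the same inequality with $\Omega$ replaced by $B_{r'}(x_0)$, and for the localized energy your description is correct: $J_s(F_k;B_{r'})=L(F\cap B_{r'},F^c\cap B_{r'})+L(F\cap B_{r'},E_k^c\setminus B_{r'})+L(E_k\setminus B_{r'},F^c\cap B_{r'})$ is affine in $\chi_{E_k}$ and dominated by the fixed integrable weight $g(y)=\int_{B_{r'}}|x-y|^{-n-s}dx\lesssim\min\{\mathrm{dist}(y,\partial B_{r'})^{-s},|y|^{-n-s}\}$ (note the kernel is singular across $\partial B_{r'}$, not ``bounded away from the singularity''; it is the integrability of $g$ that saves you), so dominated convergence applies there while Fatou handles $J_s(E_k;B_{r'})$ from below.

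Two smaller points. First, all of these limit arguments require $\chi_{E_k}\to\chi_E$ a.e.\ on $\mathbb{R}^n$, not merely on compact subsets of $\Omega$, because $J_s(\cdot\,;B_{r'})$ integrates over the complement of $B_{r'}$ in all of $\mathbb{R}^n$; so the convergence hypothesis has to be understood as in the Corollary preceding the Proposition (in $L^1_{loc}(\mathbb{R}^n)$), and your proof should say so explicitly, since with convergence only inside $\Omega$ the far-field terms do not pass to the limit. Second, at the very end you need $\rho(r')(r')^{n-s}\to\rho(r)r^{n-s}$ as $r'\searrow r$; right-continuity of $\rho$ at $r$ does not follow from A1 (choosing $r'$ among continuity points does not help at the endpoint $r$ itself), but it does follow from A3, since $t^{-s}\rho(t)$ non-increasing gives $\rho(r')\le (r'/r)^s\rho(r)$ for $r'>r$. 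With these corrections your argument becomes essentially the proof of Theorem 3.3 in \cite{CRS10} that the paper invokes.
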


\textbf{Remark}. Observe that whenever we have a sequence of almost minimal sets with $\rho_k$ as above we may use Sobolev embedding to get a subsequence that converges in $L^1$.

\section{Euler-Lagrange inequalities and partial Harnack}

In this section we will show how the almost minimality property of a set forces it to satisfy what we will call ``Euler-Lagrange inequalities''. These play the same role that the Euler-Lagrange equation (in the viscosity sense) plays in the theory of minimal surfaces, as shown first in \cite{CC93} for the perimeter and later in \cite{CRS10} for non-local energies. As in the latter work, the Euler-Lagrange inequalities will lead us here to a Harnack estimate whenever there is enough flatness.

We shall motivate the main idea by proving a maximum principle in a very simple case, after this, our main effort will be to see how close we are in the general case to this particular ideal situation and how can we adapt the argument below to a general setting. \\

\textbf{Maximum principle}. Suppose that $E$ is a \emph{minimizer} of $J_s(.)$ among all sets that agree with $E$ outside $\Omega$, and suppose that

\begin{equation*}
\{ x: x_n < 0 \} \setminus \Omega \subset E.
\end{equation*}
That is, suppose that $E$ outside $\Omega$ lies on the same side of some hyperplane, then it must lie on the same side of that hyperplane everywhere

\begin{equation*}
\{ x: x_n < 0\} \subset E.
\end{equation*}
\begin{proof} We argue by contradiction, to do this, let us slide from below a plane parallel to $\{x_n=0\}$ until we touch $\partial E$ inside $\Omega$. Since $E$ does not contain $\{ x_n < 0 \}$, there exists some $c>0$ such that $H= \{ x: x_n=-c\}$ touches from below $\partial E$ at some point $x_0 \in \Omega$. Then, for all small enough $\epsilon>0$ consider the sets:
\begin{eqnarray*}
A^-_\epsilon =& E^c \cap \{ x:x_n < -c+\epsilon\} \nonumber\\
A^+_\epsilon =& E^c \cap T_\epsilon(A^-_\epsilon)  \nonumber\\
A_\epsilon \;=& A^+_\epsilon \cup A^-_\epsilon \nonumber,
\end{eqnarray*}
here $T_\epsilon$ is the reflection with respect to the hyperplane $\{x:x_n=-c+\epsilon\}$. The set $A_\epsilon$ will be used to perturb $E$, note that because the hyperplane is touching $\partial E$ we have $|A_\epsilon|>0$ for all small $\epsilon$. Now, since $A_\epsilon \subset E^c$, and $E$ is a minimizer, we may use (\ref{eqn supersolution}) with $\rho \equiv 0$ to get
\begin{equation*}
L(A_\epsilon,E)-L(A_\epsilon,E^c\setminus A_\epsilon) \leq 0.
\end{equation*}
$T_\epsilon$ preserves the kernel $|x-y|^{-(n+s)}$ because it is an isometry,  therefore we can use it to compare above the opposite contributions of $E$ and $E^c\setminus A_\epsilon$ to the quantity above and get some cancellations, so let $F_\epsilon = T_\epsilon (E^c \setminus A_\epsilon)$. Clearly, $F_\epsilon \subset E$ and the inequality above is equivalent to
\begin{equation*}
L(A_\epsilon, E \setminus F_\epsilon) + \left [ L(A_\epsilon,F_\epsilon) - L(A_\epsilon, T(F_\epsilon) ) \right ]  \leq 0.
\end{equation*}
Moreover, $A_\epsilon$ can be decomposed as $S_\epsilon \cup D_\epsilon$, where $T(S_\epsilon)=S_\epsilon$ and $D_\epsilon \subset A^-_\epsilon$, so we can rewrite things one more time:
\begin{equation*}
L(A_\epsilon, E \setminus F_\epsilon) + \left [ L(S_\epsilon,F_\epsilon) - L(S_\epsilon, T(F_\epsilon) ) \right ] + \left [ L(D_\epsilon,F_\epsilon) - L(D_\epsilon, T(F_\epsilon) ) \right ] \leq 0,
\end{equation*}
by the symmetry of $S_\epsilon$ we have $L(S_\epsilon,F_\epsilon)=L(S_\epsilon,T(F_\epsilon))$.  On the other hand, because $F_\epsilon,D_\epsilon \subset \{x_n < -c+\epsilon\}$ it can be checked that $L(D_\epsilon,F_\epsilon) \geq L(D_\epsilon,T(F_\epsilon))$, thus
\begin{equation*}
L(A_\epsilon, E \setminus F_\epsilon)\leq 0 \Rightarrow L(A_\epsilon, E \setminus F_\epsilon) = 0.
\end{equation*}
 Since $|E \setminus F_\epsilon | >0$ for all small enough $\epsilon$, this means that $|A_\epsilon| = 0$,  a contradiction. So it must be that $\{ x: x_n\leq 0\} \subset E$ as we wanted to prove.
\end{proof}
This argument tells us that if $E$ is a minimizer in say $B_1(0)$ and $E \setminus B_1(0)$ is trapped between two hyperplanes: $E \subset \{ -a<x_n<-b\}$ then the same holds inside $B_1(0)$. It suggests we could do a similar argument if instead of sliding a hyperplane we could slide a very flat smooth surface from below (say a large ball or flat parabola); using such surfaces as barriers we may show arguing again as above that if $E$ is flat enough in a cylinder, then in a smaller cylinder $E$ cannot stretch too much vertically; this is part of the intuition behind the proofs of the Euler-Lagrange equation and the partial Harnack estimate of this section. First we recall the Euler Lagrange equation proved in \cite{CRS10} (also Theorem 5.1 there):


\begin{thm}\label{thm first Euler Lagrange}

Let $E$ be a minimizer in $\Omega$, suppose that $0 \in \partial E$ and that $E \cap \Omega$ contains the ball $B_{R}(-Re_n)$, $R\geq 1$.  Then there exist  vanishing sequences $\delta_k,\epsilon_k$ (with $\delta_k<<\epsilon_k$) and sets $\hat{A}_{k}$, with $\hat{A}_{k} \subset B_{\delta_k}(0) \cap E^c$, such that
\begin{equation*}
L(\hat{A}_{k},E \setminus B_{\delta_k}) - L(\hat{A}_{k},E^c \setminus B_{\delta_k}) \leq (C\delta_k^{1-s}+\epsilon_k^\eta)|\hat{A}_{k}|,\; \mbox{ for some } \eta>0.
\end{equation*}
Moreover,
\begin{equation*}
\int_{\mathbb{R}^n}\frac{\chi_E(x)-\chi_{E^c}(x)}{|x|^{n+s}}dx\leq 0.
\end{equation*}
\end{thm}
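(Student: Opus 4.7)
The plan is to adapt the sliding-barrier argument that was used above to prove the maximum principle, replacing the touching hyperplane by the tangent ball $B_R(-Re_n)$ and the hyperplane reflection by inversion across a sphere. Fix two vanishing scales $\delta_k \ll \epsilon_k$ and enlarge the tangent ball slightly (translate its center down by $\epsilon_k$, or increase its radius by a comparable amount) to obtain a ball $B_k \supset B_R(-Re_n)$ whose protruding cap
$$\hat A_k := B_k \cap E^c \subset E^c \cap B_{\delta_k}(0)$$
sits just above the origin; the parameters are chosen so that $|\hat A_k| \sim \delta_k^n$. Applying the supersolution property \eqref{eqn supersolution} with $\rho \equiv 0$ and splitting the resulting inequality according to whether the second argument of $L(\hat A_k, \cdot)$ lies in $B_{\delta_k}$ or in its complement yields
$$L(\hat A_k, E \setminus B_{\delta_k}) - L(\hat A_k, E^c \setminus B_{\delta_k}) \leq L\bigl(\hat A_k, (E^c \setminus \hat A_k) \cap B_{\delta_k}\bigr) - L(\hat A_k, E \cap B_{\delta_k}).$$

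Next I would close the estimate through a spherical-reflection cancellation. Let $T_k$ denote the inversion across $\partial B_k$: it is an involution swapping the interior and exterior of $B_k$ and fixing $\partial B_k$. Since $B_R(-Re_n) \subset E$ and, for an appropriate enlargement, $T_k(\hat A_k) \subset B_R(-Re_n)$, the image $T_k(\hat A_k)$ lies in $E \cap B_{\delta_k}$, so $L(\hat A_k, E \cap B_{\delta_k}) \geq L(\hat A_k, T_k(\hat A_k))$. Pairing this against the subtracted term and using that $T_k$ is an isometry to leading order near its fixed sphere (with defect proportional to the curvature $1/R$ times the distance to $\partial B_k$) produces two remainder terms: a kernel-distortion error of order $\delta_k^{1-s}|\hat A_k|$ coming from comparing $|x-y|^{-n-s}$ with $|T_k(x)-T_k(y)|^{-n-s}$ at scale $\delta_k$, and a mismatch term measuring the part of $(E^c\setminus\hat A_k)\cap B_{\delta_k}$ not accounted for by the reflection. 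The latter is bounded by $\epsilon_k^\eta|\hat A_k|$ for some $\eta>0$ by a direct volume estimate (using that inside $B_{\delta_k}$ the complement $E^c$ cannot spread beyond a thin shell around $\partial B_R(-Re_n)$, since the tangent ball fills the lower half of $B_{\delta_k}$); the two-scale separation $\delta_k\ll\epsilon_k$ is precisely what makes both error terms negligible against $|\hat A_k|$.

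Finally, for the pointwise inequality I would divide through by $|\hat A_k|$ and let $k\to\infty$. Because $\hat A_k$ concentrates at the origin, a Fubini-type argument gives
$$\frac{L(\hat A_k, E \setminus B_{\delta_k}) - L(\hat A_k, E^c \setminus B_{\delta_k})}{|\hat A_k|} = \int_{\mathbb R^n \setminus B_{\delta_k}(0)} \frac{\chi_E(y) - \chi_{E^c}(y)}{|y|^{n+s}}\,dy + o(1),$$
while the right-hand side of the first inequality tends to zero. Taking $\limsup_{k\to\infty}$ yields the principal-value statement $\int_{\mathbb R^n} \frac{\chi_E(x)-\chi_{E^c}(x)}{|x|^{n+s}}\,dx \leq 0$, with the integral interpreted as $\lim_{r\to 0}\int_{|x|>r}$.

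The main obstacle is the reflection step. The hyperplane reflection used in the maximum-principle argument is an exact isometry, which is why the cancellation $L(S_\epsilon,F_\epsilon)=L(S_\epsilon,T(F_\epsilon))$ there is identically zero; inversion across a sphere is only conformal, and the precise amount of kernel distortion together with the volume mismatch must be tracked carefully. It is exactly this bookkeeping that forces the hierarchy $\delta_k\ll\epsilon_k$ and produces the specific error $C\delta_k^{1-s}+\epsilon_k^\eta$ appearing in the statement.
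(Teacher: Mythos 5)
Your overall skeleton (supersolution property with $\rho\equiv 0$, split the interactions inside/outside a small ball, a sphere-type reflection to cancel the near-field terms, then divide by the volume and pass to the limit for the pointwise inequality) is indeed the strategy behind this theorem, which the paper itself only recalls from \cite{CRS10} and then reworks in detail in Theorem \ref{thm second Euler Lagrange}. However, the construction you propose would not survive the key steps. First, enlarging the whole tangent ball does not localize the perturbation: $\hat A_k=B_k\cap E^c$ need not lie in $B_{\delta_k}(0)$ at all, since the enlarged ball protrudes from $B_R(-Re_n)$ along the entire sphere and $E^c$ may meet that shell far from the origin; even the purely geometric cap over the tangency point has horizontal extent of order $\sqrt{R\epsilon_k}\gg\delta_k$. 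This is exactly why the paper deforms the sphere only by a compactly supported bump $d_\epsilon(x)=\epsilon^2 d(x/\epsilon)$ of height $\sim\epsilon^2/R$ over a patch of radius $\epsilon$, and uses an interior ball of half the radius, so that $A_\epsilon^-\subset B_{2\epsilon}$ (Proposition \ref{prop Properties}); likewise your assertion $|\hat A_k|\sim\delta_k^n$ is unjustified — the density estimate only gives a bound like $|A_\epsilon|\gtrsim\epsilon^{2n}$, the eccentricity issue the paper explicitly flags. Second, the reflection step is misapplied: the inversion across $\partial B_k$ maps $\hat A_k\subset B_k$ to the \emph{exterior} of $B_k$, so $T_k(\hat A_k)\subset B_R(-Re_n)$ is false; and even with a corrected map, bounding $L(\hat A_k,E\cap B_{\delta_k})$ from below by $L(\hat A_k,T_k(\hat A_k))$ does not touch the dangerous term $L\bigl(\hat A_k,(E^c\setminus\hat A_k)\cap B_{\delta_k}\bigr)$, which involves the unknown part of $E^c$ hugging the barrier, not $\hat A_k$ itself. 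The paper's cancellation reflects precisely that unknown set, $F_{\epsilon,\delta}=T_\epsilon\bigl((E^c\setminus A_\epsilon)\cap B_\delta\bigr)\subset E\cap B_\delta$, and it only works because $A_\epsilon$ is built to be (essentially) symmetric under $T_\epsilon$: $A_\epsilon=S_\epsilon\cup D_\epsilon$ with $T_\epsilon(S_\epsilon)=S_\epsilon$ and $|x-T_\epsilon(y)|\geq|x-y|$ on the leftover piece $D_\epsilon$. Your $\hat A_k$ has no such symmetry, so the most singular contributions do not cancel.

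Third, the ``mismatch term bounded by $\epsilon_k^\eta|\hat A_k|$ by a direct volume estimate'' is where the real work lies and where the special sequences $\epsilon_k$ come from. After the cancellations one still must control $L(A^-_\epsilon,V^c_{R,\epsilon})$, and for a general minimizer this is \emph{not} $O(|A^-_\epsilon|)$ uniformly in $\epsilon$: if $E^c$ concentrates right against the barrier the interaction blows up relative to the volume. The paper (Claim in Step 3 of Theorem \ref{thm second Euler Lagrange}, cf.\ Lemma 5.2 of \cite{CRS10}) selects good radii by an averaging/pigeonhole argument in $\epsilon$ over a dyadic interval, using the coarea function $a(r)$ and the density estimate, to get $L(A^-_\epsilon,V^c_{R,\epsilon})\leq C\epsilon^{-(1+s)/2}|A^-_\epsilon|$ only along such $\epsilon$; combined with the $O(\epsilon/R)$ kernel distortion this produces the $\epsilon^\eta|\hat A_k|$ term and explains why the theorem asserts existence of sequences rather than an estimate for all scales. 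Finally, in your limiting step the replacement of the kernel at points of $\hat A_k$ by the kernel at $0$ over $B_{\delta_k}^c$ costs $O\bigl(\mathrm{diam}(\hat A_k)\,\delta_k^{-1-s}\bigr)$, which is not negligible when the perturbation and the excluded ball live at the same scale; the paper avoids this (Corollary \ref{cor pointwise Euler Lagrange}) by sending the perturbation scale to zero with $\delta$ fixed and only afterwards letting $\delta\to0$.
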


\bigskip

As we discussed previously, the Euler-Lagrange equation implies the partial Harnack inequality for flat minimizers. The same perturbation argument provides a similar estimate for almost minimal sets which comes with an extra term of order $\rho(\epsilon)\epsilon^{n-s}$.\\

\BlueComment{\textbf{Remarks}. 	We are going to do a slightly more general version of this estimate, which takes into account the deviation from minimality. Firstly, we will need to say something more about the sequence $\epsilon_k \to 0$ along which the estimates hold, in some sense, we will prove that in fact these inequalities hold for \emph{any} $\epsilon$ small enough (see Theorem \ref{thm second Euler Lagrange}), this feature will be necessary later in the proof of Lemma \ref{lem Liouville}.\\

Another important obstacle in these kind of arguments is that the perturbation sets $A_\epsilon$ might have a lot of eccentricity (large diameter, but with low density). This means that the change in non-local energy can be several order of magnitudes larger than the available volume bounds for $A_\epsilon$. In fact, $A_\epsilon$ may have a diameter of order $\epsilon$ whereas we only know in general that  $|A_\epsilon|$ is only $\geq \epsilon^{2n}$, thus, one could be concerned that such perturbations do not detect the almost minimality property.\\

In an earlier version of this manuscript an attempt to fix this issue consisted in constructing a different perturbation than in \cite{CRS10}, however such a construction did not solve the problem either, as was pointed out to the authors by the anonymous referee. After this realization it was observed that possibly larger terms arising from the original perturbation argument can all be absorbed into $\rho(.)$, and this ``technicality'' is completely taken care of by Lemma \ref{lem rho doubling}. This is still not entirely satisfactory, for the price we pay is not being  able to prove that the first variation of $J_s$ energy vanishes when $\rho = o(t^s)$ which is to be expected (see remark after Theorem \ref{thm second Euler Lagrange}), however, these estimates are more than enough for the regularity theory.}\\

\textbf{Construction of the perturbation}. Suppose $B=B_{2R}(-2Re_n)$ is a ball contained in $E$ touching $\partial E$ at $0$, assuming always $R\geq 1$. For any \BlueComment{$\epsilon\in(0,R)$}, we will denote by $T_\epsilon(x)$ an involution which should be thought of as a reflection with respect to a perturbation of the sphere  $\partial B_{R+\epsilon}(-Re_n)$.\\

\BlueComment{More precisely, $T_\epsilon(x)$ is the unique point $x_1$ lying on the ray from $-Re_n$ to $x$ such that the mid point of the segment $xx_1$ lies on a slightly deformed sphere $\partial V_{R,\epsilon}$, where
\begin{equation*}
V_{R,\epsilon}=\{x \in \mathbb{R}^n \;|\;|x+Re_n|\leq R+d_\epsilon(x)\},
\end{equation*}
\begin{equation*}
\mbox{ where } d_\epsilon(x)= \epsilon^2 d(x/\epsilon),\; d(x)=R^{-1}(1-|x'|^2)_{_+}.	
\end{equation*}

Algebraically, the map is given by
\begin{equation*}
T_\epsilon(x) +x =  -2Re_n+2(R+d_\epsilon(x))\frac{x+Re_n}{|x+Re_n|}.
\end{equation*}

Observe that $T_\epsilon$ is a smooth diffeomorphism and an involution of the interior of the ``ring'' $V_{2R,\epsilon}\setminus V_{0,\epsilon}$ into itself}. We are going to perturb the set $E$ by adding to it $A_\epsilon$, defined by 
\begin{equation*}
A_\epsilon=A^-_\epsilon \cup T_\epsilon(A^-_\epsilon) \setminus E \;\;\; \mbox{ where } \BlueComment{A^-_\epsilon = V_{R,\epsilon} \setminus E.}
\end{equation*}
An observation that will be important in all what follows is that $A_\epsilon$ can be decomposed as 
\begin{equation*}
A_\epsilon = S_\epsilon \cup D_\epsilon,
\end{equation*}
where $T_\epsilon(S_\epsilon)=S_\epsilon$ and 
\begin{equation*}
D_\epsilon \subset \BlueComment{V_{R,\epsilon}\setminus E.}
\end{equation*}
The other properties of  $A_\epsilon$ and $T_\epsilon$ that we need are summarized in the following proposition.


\BlueComment{\begin{prop}\label{prop Properties} Assume that $B_{2R}(-2Re_n)$ is an interior ball tangent to $\partial E$ at $0$ with $R\geq 1$ and that $T_\epsilon$ is as above. Setting $r_{x,\epsilon}:=d(x,\partial V_{R,\epsilon})$, the following holds $\forall\; \epsilon \in (0,(6n)^{-1})$	
\begin{enumerate}
\item If $x \in V_{2R,\epsilon}\setminus V_{0,\epsilon}$ we have \footnote{ here we are using the Euclidean norm for matrices $|A|=\mbox{Tr}(AA^*)^{1/2}$}
\begin{equation*}
|DT_\epsilon(x)- P_x |\leq \frac{2}{R}(3nr_{x,\epsilon}+|x'|),
\end{equation*}
where $P_x =$ reflection along the direction $x+Re_n$. In particular,
\begin{equation*}
\left | \frac{|T_\epsilon(x)-T_\epsilon(y)|}{|x-y|}-1 \right |\leq \frac{2}{R}\max\{3nr_{x,\epsilon}+|x'|,3nr_{y,\epsilon}+|y'|\},
\end{equation*}
whenever $x, y \in V_{2R,\epsilon}\setminus V_{0,\epsilon}$.

\item We have the inclusions $A^-_\epsilon \subset B_{2\epsilon}$ and $B_{\epsilon^2/(2R)}\setminus E \subset A_\epsilon \subset B_{8\epsilon}$.

\end{enumerate}

\end{prop}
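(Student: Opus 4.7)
First, for part (1), the plan is to differentiate the explicit formula
\[
T_\epsilon(x) = -x - 2Re_n + 2(R + d_\epsilon(x))\frac{x+Re_n}{|x+Re_n|}.
\]
Setting $u = x + Re_n$ and $\hat u = u/|u|$, a direct computation yields
\[
DT_\epsilon(x) = -I + 2\,\hat u \otimes \nabla d_\epsilon(x) + \frac{2(R+d_\epsilon(x))}{|u|}\bigl(I - \hat u \otimes \hat u\bigr).
\]
Since $P_x = I - 2\hat u \otimes \hat u$, an algebraic rearrangement produces
\[
DT_\epsilon(x) - P_x = \frac{2(R + d_\epsilon(x) - |u|)}{|u|}\bigl(I - \hat u \otimes \hat u\bigr) + 2\,\hat u \otimes \nabla d_\epsilon(x),
\]
so the problem reduces to bounding the two scalar factors. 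From $d_\epsilon(x) = R^{-1}(\epsilon^2 - |x'|^2)_+$ one reads off $|\nabla d_\epsilon(x)| = 2|x'|/R$ directly. For the first factor, I would compare $x$ with its closest point $\tilde x \in \partial V_{R,\epsilon}$, whose image $\tilde u$ satisfies $|\tilde u| = R + d_\epsilon(\tilde x)$; this gives $\bigl||u| - (R + d_\epsilon(x))\bigr| \leq r_{x,\epsilon} + |d_\epsilon(\tilde x) - d_\epsilon(x)| \leq r_{x,\epsilon}(1 + 2\epsilon/R)$. Combined with the lower bound $|u| \geq R/2$ (valid since $x\notin V_{0,\epsilon}$ and $d_\epsilon \leq \epsilon^2/R$) and the restriction $\epsilon \leq (6n)^{-1}$, these ingredients yield the claimed pointwise bound, the $3n$ factor absorbing the Frobenius norm $|I - \hat u\otimes\hat u|_F = \sqrt{n-1}$ together with the $(1+2\epsilon/R)$ correction.

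For the ratio estimate, I would use the integral representation
\[
T_\epsilon(x) - T_\epsilon(y) = \int_0^1 DT_\epsilon(z_t)(x-y)\,dt, \qquad z_t = (1-t)x + ty,
\]
and compare with the reflection at a convenient base point, using that any reflection is an isometry: the triangle inequality then gives $\bigl||T_\epsilon(x)-T_\epsilon(y)| - |x-y|\bigr| \leq \sup_t |DT_\epsilon(z_t) - P_{z_t}||x-y|$ up to a contribution from the variation of $P_z$ along the segment, itself bounded by the same quantity. The max over the endpoints arises because $|z_t'| \leq \max(|x'|,|y'|)$ and because the distance function $r_{\cdot,\epsilon}$ to $\partial V_{R,\epsilon}$ is $1$-Lipschitz, so on each side of the boundary it is dominated by its values at $x$ and $y$.

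Part (2) is geometric. For the first inclusion $A_\epsilon^- \subset B_{2\epsilon}$, the hypothesis $B_{2R}(-2Re_n)\subset E$ combined with $B_R(-Re_n)\subset B_{2R}(-2Re_n)$ forces $A_\epsilon^- \subset V_{R,\epsilon}\setminus B_R(-Re_n)$, which is the ``bump'' over the origin and satisfies $|x'|\leq \epsilon$ and $|x_n|\leq d_\epsilon \leq \epsilon^2/R \leq \epsilon$. For $B_{\epsilon^2/(2R)}\setminus E \subset A_\epsilon$ it suffices to show $B_{\epsilon^2/(2R)} \subset V_{R,\epsilon}$, which is a direct calculation comparing $|x+Re_n|^2 \leq R^2 + 2R|x| + |x|^2$ with $(R+d_\epsilon(x))^2 \geq R^2 + 2(\epsilon^2 - |x|^2)$. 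Finally, $A_\epsilon \subset B_{8\epsilon}$ combines $A_\epsilon^-\subset B_{2\epsilon}$ with part~(1): since $T_\epsilon$ is approximately a reflection on $V_{2R,\epsilon}\setminus V_{0,\epsilon}$, it maps $A_\epsilon^-$ into $B_{C\epsilon}$ for an explicit $C$, and calibrating constants using $\epsilon \leq (6n)^{-1}$ gives $C \leq 8$.

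The main obstacle will be the constant bookkeeping in part~(1), in particular matching the dimensional factor $3n$: this requires simultaneous tracking of $\sqrt{n-1}$, the correction $(1 + 2\epsilon/R)$ arising from variation of $d_\epsilon$ in the normal direction, and separate treatment of the regime where $|u|$ is comparable to $R$ (where the bound is tight) versus where it is much larger (where it holds trivially). A secondary subtlety is that the ratio estimate needs the variation of the reflection $P_z$ along the segment to be absorbed into the same endpoint quantities, which rests on the Lipschitz character of both $|x'|$ and $r_{x,\epsilon}$.
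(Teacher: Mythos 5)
Your proposal follows essentially the same route as the paper: differentiate the explicit formula for $T_\epsilon$, identify the reflection $P_x=I-2\hat u\otimes\hat u$, bound the defect by the radial discrepancy ($\le r_{x,\epsilon}$ up to the Lipschitz correction of $d_\epsilon$) together with $|\nabla d_\epsilon|\le 2|x'|/R$, and then obtain part (2) by the same geometric inclusions and the Jacobian bound $|DT_\epsilon|\le C$. Two small points where the paper's argument is tighter and your shortcut needs repair. First, for the ratio estimate you get the lower bound by comparing with the reflection $P_{z_0}$ at a fixed base point and claim the variation of $P_{z_t}$ along the segment is ``bounded by the same quantity''; in general it is only bounded by $C|x-y|/R$, which is not dominated by $\frac{2}{R}\max\{3nr_{x,\epsilon}+|x'|,3nr_{y,\epsilon}+|y'|\}$ (take $x,y$ on $\partial V_{R,\epsilon}$ with $x'=y'=0$, where the endpoint quantities vanish but $|x-y|\sim 2R$). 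The paper avoids this entirely: the upper bound needs only the pointwise estimate $\|DT_\epsilon(z_t)\|\le 1+|DT_\epsilon(z_t)-P_{z_t}|$ along the segment, and the matching lower bound then follows for free from the fact that $T_\epsilon$ is an involution. Second, in proving $A^-_\epsilon\subset B_{2\epsilon}$ you weaken the hypothesis to $A^-_\epsilon\subset V_{R,\epsilon}\setminus B_R(-Re_n)$ and call this set ``the bump over the origin''; it is not — it has a second (antipodal) component near $x_n\approx -2R$ where $d_\epsilon>0$, so your chain as written does not place $A^-_\epsilon$ in $B_{2\epsilon}$. The fix is immediate and is what the paper does: keep the full tangent ball, $A^-_\epsilon\subset V_{R,\epsilon}\setminus B_{2R}(-2Re_n)$, which excludes the antipodal component since that component lies inside $B_{2R}(-2Re_n)$, and then read off $|x'|<\epsilon$ and $-R^{-1}|x'|^2\le x_n\le d_\epsilon$ as you do.
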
}

\begin{proof}
\BlueComment{To show the first assertion we are going to quantify how far is $T_\epsilon$ from being a isometry. First let us differentiate $T_\epsilon$,
\begin{equation*} 
DT_\epsilon(x)=\left (-1+2\frac{(R+d_\epsilon(x))}{|x+Re_n|} \right ) I - 2\frac{(R+d_\epsilon(x))}{|x+Re_n|}\frac{x+Re_n}{|x+Re_n|}\otimes \frac{x+Re_n}{|x+Re_n|} .
\end{equation*}
\begin{equation*}
+\epsilon (\nabla d)(x/\epsilon) \otimes \frac{(x+Re_n)}{|x+Re_n|}.	
\end{equation*}

From this expression we observe already that when $|x+Re_n|=R+d_\epsilon(x)$ (i.e. when $x\in \partial V_{R,\epsilon}$) then $DT_\epsilon$ is nothing but a reflection (specifically, $P_x$) plus a ``defect'' term $\nabla d(x)\otimes \frac{x+Re_n}{|x+Re_n|}$. In general, note that $|x+Re_n|= R+d_\epsilon(x) \pm r_{x,\epsilon}$ and	
\begin{equation*}
DT_\epsilon(x)=P_x+2\left (\frac{(R+\epsilon)}{R+d_\epsilon(x) \pm r_{x,\epsilon}} -1\right ) \left (I-2 \frac{x+Re_n}{|x+Re_n|}\otimes \frac{x+Re_n}{|x+Re_n|} \right )
\end{equation*}
\begin{equation*}
+\epsilon(\nabla d)(x/\epsilon) \otimes \frac{(x+Re_n)}{|x+Re_n|}.
\end{equation*}
Since $\left |\frac{(R+\epsilon)}{R+d_\epsilon(x) \pm r_{x,\epsilon}} -1\right |=\frac{r_{x,\epsilon}}{R+d_\epsilon(x)\pm r_{x,\epsilon}}$, and $|\nabla d(x)|\leq 2R^{-1}|x'|$ we obtain
\begin{equation*}
|DT_\epsilon(x)-P_x|\leq 3n\frac{r_{x,\epsilon}}{R+d_\epsilon(x)- r_{x,\epsilon}}+\frac{2|x'|}{R}\leq \frac{6n}{R}r_{x,\epsilon}+\frac{2|x'|}{R},
\end{equation*}
provided $\epsilon<R/2$. Now, since $P_x$ is an isometry, by integrating along the segment between $x$ and $y$ we get (for all $x,y$ in the ring under consideration)
\begin{equation*}
|T_\epsilon(x)-T_\epsilon(y)|\leq \left ( 1+\frac{2}{R}\max\{3nr_{x,\epsilon}+|x'|,3nr_{y,\epsilon}+|y'|\}\right )|x-y|.
\end{equation*}

This gives the desired upper bounded for $\frac{|T_\epsilon(x)-T_\epsilon(y)|}{|x-y|}$. Given that $T_\epsilon$ is an involution, we get the lower bound too and we are done proving the first assertion.

Now to the second part: by construction,
\begin{equation*}
A^-_\epsilon \subset V_{R,\epsilon} \setminus B_{2R}(-2Re_n),
\end{equation*}
in other words, if $x \in A^-_\epsilon \subset V_{R,\epsilon} \setminus B_{2R}(-2Re_n)$ then
\begin{equation*}
|x+Re_n|\leq R+\epsilon d(x'/\epsilon),\;\;|x+2Re_n|>2R.	
\end{equation*}

From these two inequalities follows that $d(x'/\epsilon) \neq 0 \Rightarrow |x'|<\epsilon$. Moreover by a straightforward computation it can be checked that $x_n \geq -R^{-1}|x'|^2$, and recalling $\epsilon \leq R$ we get $x_n>-\epsilon$, the upper bound $x_n<\epsilon$ is obvious.	All this proves that $|x|\leq 2\epsilon$, that is $A^-_\epsilon \subset B_{2\epsilon}$. Now using the estimate on $T_\epsilon$ we obtain 
\begin{equation*}
|DT_\epsilon|\leq 1+ 2\frac{3n+1}{R}\epsilon \leq 4.
\end{equation*}

Here we used again $\epsilon<(6n)^{-1}$ and $R\geq 1$, this gives a bound for a ball trapping $T_\epsilon(A_\epsilon^-)$. In fact, we have proved that $A_\epsilon	\subset B_{8\epsilon}$. Finally, since $R\geq 1$ we have $|x|\leq \epsilon^2/(2R) \Rightarrow d_\epsilon(x')\geq 3\epsilon^2/(4R)$ which implies $|x+Re_n|\leq R+d_\epsilon(x')$, this proves that $V_{R,\epsilon}$ contains $B_{\epsilon^2/(2R)}$, and thus $B_{\epsilon^2/(2R)} \setminus E \subset A^-_\epsilon$ and we are done with the proof.}
\end{proof}

We are now ready to state and prove the sharpening of Theorem \ref{thm first Euler Lagrange} that we need in our setting.


\BlueComment{

\begin{thm}\label{thm second Euler Lagrange}(Euler-Lagrange inequalities)
Let $E$ be a set satisfying the supersolution property (\ref{eqn supersolution}) in $\Omega$ with respect to $(J_s,\rho,d)$, $\rho$ satisfying A1-A3. Suppose that $0 \in \partial E$ and that $E \cap \Omega$ contains the ball $B_{2R}(-2Re_n), R\geq 1$.\\

There are constants $C_0(n,s,\rho)$ and $r_0(n,s,\rho)$ such that:
\begin{equation*}
\mbox{whenever } 0<8\epsilon^*<\delta<\min\{r_0(n,s,\rho),d\}, 
\end{equation*}
\begin{equation*}
\mbox{ one can find at least one } \epsilon \in (\epsilon^*/2,\epsilon^*) \mbox{ such that:}	
\end{equation*}
\begin{equation}\label{eqn Euler Lagrange}
L(A_\epsilon,E \setminus B_{\delta}) - L(A_\epsilon,E^c \setminus B_{\delta})\leq C_0\left (\rho(8\epsilon)\epsilon^{n-s}+R^{-1}\delta^{\frac{1-s}{2}}|A^-_\epsilon|. \right )
\end{equation}

An analogous inequality holds when $E$ satisfies (\ref{eqn subsolution}) and $B_{2R}(Re_n)\subset E^c \cap \Omega$.

\bigskip

\end{thm}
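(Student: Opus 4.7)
The plan is to apply the supersolution property directly to the perturbation $A_\epsilon$ and then exploit the near-symmetry of $A_\epsilon$ under $T_\epsilon$ to rewrite the inequality in terms of the complement of $B_\delta$. By Proposition~\ref{prop Properties}(2) we have $A_\epsilon \subset E^c \cap B_{8\epsilon}$, and since $8\epsilon < \delta \leq d$, applying (\ref{eqn supersolution}) with $A = A_\epsilon$ and radius $8\epsilon$ yields
\begin{equation*}
L(A_\epsilon, E) - L(A_\epsilon, E^c \setminus A_\epsilon) \leq \rho(8\epsilon)(8\epsilon)^{n-s}.
\end{equation*}
Using $A_\epsilon \subset B_{8\epsilon} \subset B_\delta$ to split both $E$ and $E^c \setminus A_\epsilon$ across $\partial B_\delta$ rearranges this into
\begin{equation*}
L(A_\epsilon, E\setminus B_\delta) - L(A_\epsilon, E^c \setminus B_\delta) \leq \rho(8\epsilon)(8\epsilon)^{n-s} + \bigl[L(A_\epsilon, (E^c\setminus A_\epsilon)\cap B_\delta) - L(A_\epsilon, E \cap B_\delta)\bigr],
\end{equation*}
so the problem reduces to bounding the bracketed ``local'' difference by $CR^{-1}\delta^{(1-s)/2}|A^-_\epsilon|$.

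The bracketed term is the heart of the proof. I would use the decomposition $A_\epsilon = S_\epsilon \cup D_\epsilon$ with $T_\epsilon(S_\epsilon) = S_\epsilon$ and $D_\epsilon \subset A^-_\epsilon$, treating the two pieces separately. For the symmetric piece $S_\epsilon$, change variables $y \mapsto T_\epsilon(y)$ in the inner integral. A direct check shows that $T_\epsilon$ sends $E^c \setminus A_\epsilon$ into $E$: if $y \in E^c \setminus A_\epsilon$ then $y$ lies outside $V_{R,\epsilon}$ (since $V_{R,\epsilon} \cap E^c = A^-_\epsilon \subset A_\epsilon$), so $T_\epsilon(y) \in V_{R,\epsilon}$; and if we had $T_\epsilon(y) \in A^-_\epsilon \subset E^c$, this would force $y \in T_\epsilon(A^-_\epsilon)\setminus E \subset A_\epsilon$, a contradiction. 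This supplies the main cancellation, modulo the distortion of $T_\epsilon$ controlled by Proposition~\ref{prop Properties}(1): the factor $|x-T_\epsilon(y)|^{-(n+s)}/|x-y|^{-(n+s)}$ differs from $1$ by $O(R^{-1}\max(r_{x,\epsilon}+|x'|,\, r_{y,\epsilon}+|y'|))$, with unit Jacobian to leading order. Integrating this defect against $|x-y|^{-(n+s)}$ with $x \in S_\epsilon \subset B_{8\epsilon}$ and $y$ sweeping the annulus between $B_{8\epsilon}$ and $B_\delta$ produces a bound of the form $C R^{-1}\delta^{(1-s)/2}|S_\epsilon|$ after using $|S_\epsilon| \leq 2|A^-_\epsilon|$. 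For $D_\epsilon$, the crude bound $|D_\epsilon|\leq |A^-_\epsilon|$ together with the pointwise bound on the kernel handles its contribution.

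The averaging $\epsilon\in(\epsilon^*/2,\epsilon^*)$ is needed to handle a residual boundary-layer contribution, coming both from the mismatch $B_\delta \triangle T_\epsilon(B_\delta)$ and from the fact that $D_\epsilon$ lives in a thin shell attached to $\partial V_{R,\epsilon}$. Integrating these shell-type error terms in $\epsilon$ and applying Fubini converts them into a volume integral dominated by $|A^-_{\epsilon^*}|$, and pigeonhole then selects a single good $\epsilon$. The main obstacle, as the authors flag in the remarks preceding the statement, is that $A^-_\epsilon$ may have diameter $\sim\epsilon$ but volume as small as $\epsilon^{2n}$, so a naive version of the local error is vastly larger than $|A^-_\epsilon|$. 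The remedy is not to sharpen the geometric estimate but to absorb the excess term $\rho(8\epsilon)(8\epsilon)^{n-s}$ through Lemma~\ref{lem rho doubling} into a single $\rho$-type quantity on the right-hand side; this is precisely what allows the statement to survive under only the abstract assumptions A1--A3 and is exactly the ``technicality'' referred to in the remarks. The subsolution case follows by swapping the roles of $E$ and $E^c$ and using an exterior ball $B_{2R}(Re_n)\subset E^c\cap \Omega$.
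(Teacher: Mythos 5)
Your opening matches the paper's Steps 1--2: apply (\ref{eqn supersolution}) to $A=A_\epsilon$ with $r=8\epsilon$, split the interactions across $\partial B_\delta$, introduce $F_{\epsilon,\delta}=T_\epsilon\left((E^c\setminus A_\epsilon)\cap B_\delta\right)\subset E\cap B_\delta$, and exploit the decomposition $A_\epsilon=S_\epsilon\cup D_\epsilon$ together with the distortion bounds of Proposition \ref{prop Properties}. But the crucial quantitative step is missing. When you integrate the distortion defect $O\bigl(R^{-1}\max\{3nr_{x,\epsilon}+|x'|,3nr_{y,\epsilon}+|y'|\}\bigr)$ against the kernel, the region $|x-y|\gtrsim r_{x,\epsilon}$ indeed yields $CR^{-1}\delta^{1-s}|A^-_\epsilon|$; in the near region, however, the defect is only $O(\epsilon/R)$ per unit kernel, and what remains is a term of size $CR^{-1}\epsilon\,L(A^-_\epsilon,F_{\epsilon,\delta})$. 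This term cannot be bounded by $CR^{-1}\delta^{\frac{1-s}{2}}|A^-_\epsilon|$ by direct integration: $F_{\epsilon,\delta}\subset E$ abuts $A^-_\epsilon\subset E^c$ along $\partial E$, so the inner potential $\int_{F_{\epsilon,\delta}}|x-y|^{-n-s}dy$ is only controlled by $d(x,\partial E)^{-s}$, and since $A^-_\epsilon$ may be extremely eccentric (diameter $\sim\epsilon$ but volume as small as $\epsilon^{2n}$, as the remarks before the theorem stress) this can vastly exceed $\delta^{\frac{1-s}{2}}|A^-_\epsilon|$. The paper's Step 3 exists precisely for this: it applies the supersolution property a \emph{second} time, to $A^-_\epsilon$, giving $L(A^-_\epsilon,F_{\epsilon,\delta})\le L(A^-_\epsilon,V_{R,\epsilon}^c)+\rho(8\epsilon)(8\epsilon)^{n-s}$, and then proves by a contradiction-and-averaging argument (integrate the negated bound over $\epsilon$ in a dyadic interval, use the coarea formula for $a(r)=\mathcal H^{n-1}(A^-_\epsilon\cap\partial V_{R,r})$ and the density estimate of Proposition \ref{prop density} adapted to the deformed balls $V_{R,r}$ to ensure nondegeneracy) that for at least one $\epsilon$ in the interval $L(A^-_\epsilon,V_{R,\epsilon}^c)\le C_{n,s}\epsilon^{-\frac{1+s}{2}}|A^-_\epsilon|$. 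This is exactly where the exponent $\frac{1-s}{2}$ and the ``one can find at least one $\epsilon$'' clause in the statement come from.

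Your substitute explanations do not close this gap. The averaging you propose (over the mismatch $B_\delta\triangle T_\epsilon(B_\delta)$ and the shell containing $D_\epsilon$) targets the wrong quantity and, as you describe it, would only produce a bound by $|A^-_{\epsilon^*}|$ rather than by $|A^-_\epsilon|$ for the selected $\epsilon$; the genuine pigeonhole is on $L(A^-_\epsilon,V^c_{R,\epsilon})$ and needs the density estimate to guarantee $\int_{\epsilon^*}^{2\epsilon^*}f'(r)\,dr\neq 0$. Likewise, Lemma \ref{lem rho doubling} plays no role in this proof: it is used later (in Theorem \ref{thm Harnack estimate} and Lemma \ref{lem Liouville}) to absorb $\rho(2^{-k}8\epsilon)$, and it cannot absorb $\epsilon L(A^-_\epsilon,F_{\epsilon,\delta})$, which is not a $\rho$-type quantity. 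Finally, the $D_\epsilon$-contribution cannot be handled by ``$|D_\epsilon|\le|A^-_\epsilon|$ plus a pointwise kernel bound'': $D_\epsilon$ and $F_{\epsilon,\delta}$ can be adjacent, so the kernel is unbounded there; the paper instead changes variables only in $y$ and uses the geometric fact $|x-T_\epsilon(y)|\ge|x-y|$ for $x,y\in V_{R,\epsilon}$.
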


\begin{proof} 

We divide the proof in 3 steps. In the first step we use directly the definition of the supersolution property (\ref{eqn supersolution}) to show that the left side in (\ref{eqn Euler Lagrange}) is controlled by $\rho(.)\epsilon^{n-s}$ plus a term containing the singularity of the kernel (i.e. the higher order part). In the second step we show there are cancellations in the extra terms due to the symmetry of $S_\epsilon$ under $T_\epsilon$ that help us get rid of the high order singularity, this is where the built-in symmetry of $A_\epsilon$ becomes crucial. Finally, we use again the supersolution property to control whatever is left by $L(A^-_\epsilon,\mathcal{C}B_{R+\epsilon}(-Re_n))$.\\

    
\noindent \emph{Step 1}. Fix $\epsilon,\delta \in (0,d)$ with $8\epsilon<\delta$ so that $A_\epsilon \subset B_\delta$ by Proposition \ref{prop Properties}. Define
\begin{equation*}
F_{\epsilon,\delta}:=T_\epsilon\left ( (E^{c} \setminus A_\epsilon) \cap B_{\delta}\right ).
\end{equation*}

The first step consists in proving the inequality
\begin{equation*}
L(A_{\epsilon},E\setminus B_{\delta})-L(A_{\epsilon},E^{c} \setminus B_{\delta}) \leq \rho(8\epsilon)(8\epsilon)^{n-s}
\end{equation*}
\begin{equation}\label{eqn step one}
-\left [ L(A_{\epsilon},F_{\epsilon,\delta})-L(A_{\epsilon},T(F_{\epsilon,\delta})) \right ].
\end{equation}

To do this, we shall use (\ref{eqn supersolution}) for $A=A_\epsilon$, with $r=8\epsilon$. We decompose the integrals in (\ref{eqn supersolution}) into parts corresponding to integration inside and outside the ball $B_\delta$, as follows
\begin{equation*}
L(A_\epsilon,E\setminus B_{\delta})-L(A_{\epsilon},E^{c} \setminus B_{\delta}) \leq \rho(8\epsilon)(8\epsilon)^{n-s} -I_0,
\end{equation*}
\begin{equation*}
I_0= L(A_{\epsilon},E \cap B_{\delta})-L(A_{\epsilon},( E^{c}\setminus A_{\epsilon} )\cap B_{\delta}).
\end{equation*}

By construction, it is obvious that $F_{\epsilon,\delta} \subset E$. Furthermore, we claim $T_\epsilon$ maps $B_\delta \setminus V_{R,\epsilon}$ into $B_\delta$. To prove this let us take
\begin{equation*}
x \in B_\delta \setminus V_{R,\epsilon}.
\end{equation*}

Then, we observe that the normal hyperplane to $x+Re_n$ passing through $-Re_n+(R+d_\epsilon)\frac{x+Re_n}{|x+Re_n|}$ separates $x$ and the origin, so that the distance between $0$ and $x$  is no smaller than the distance between $0$ and the reflection of $x$ by the aforementioned hyperplane, this image is just $T_\epsilon(x)$. This shows $|T_\epsilon(x)|\leq |x|\leq \delta$ and the claim is proved.  We conclude that  $F_{\epsilon,\delta} \subset E \cap B_\delta$ and
\begin{equation*}
I_0=L(A_{\epsilon},E \cap B_{\delta}\setminus F_{\epsilon,\delta})+\left [ L(A_{\epsilon},F_{\epsilon,\delta})-L(A_{\epsilon},T_\epsilon(F_{\epsilon,\delta})) \right ],
\end{equation*}
given that the first term is non-negative, (\ref{eqn step one}) is proved.

\bigskip


\noindent \emph{Step 2}.  Using $A_\epsilon = S_\epsilon \cup D_\epsilon$, we rewrite the integrals on the right hand side of (\ref{eqn step one}),

\begin{equation*}
L(A_{\epsilon},F_{\epsilon,\delta})-L(A_{\epsilon},T_\epsilon(F_{\epsilon,\delta}))=\left [L(S_\epsilon,F_{\epsilon,\delta})-L(S_{\epsilon},T_\epsilon(F_{\epsilon,\delta})) \right ]
\end{equation*}
\begin{equation*}
+ \left [L(D_{\epsilon},F_{\epsilon,\delta})-L(D_{\epsilon},T_\epsilon(F_{\epsilon,\delta})) \right ].
\end{equation*}

Let us bound each term. We apply the change of variables $x,y \to T_\epsilon(x),T_\epsilon(y)$ to estimate the integral on $S_\epsilon \times T_{\epsilon}(F_{\epsilon,\delta})$ as follows

\begin{equation*}
L(S_{\epsilon},T_\epsilon(F_{\epsilon,\delta})) =(1-s)\int_{T_\epsilon(F_{\epsilon,\delta})}\int_{S_\epsilon}\frac{1}{|x-y|^{n+s}}dxdy
\end{equation*}
\begin{equation*}
=(1-s)\int_{F_{\epsilon,\delta}}\int_{S_\epsilon}\frac{\mbox{det}\left ( DT_\epsilon(x) \right ) \mbox{det} \left (DT_\epsilon(y)\right )}{|T_\epsilon(x)-T_\epsilon(y)|^{n+s}}dxdy.
\end{equation*}

The first part of Proposition \ref{prop Properties} asserts that $|DT_\epsilon(x)-P_x| \leq 2R^{-1}\left (3nr_{x,\epsilon}+|x'|\right)$ for all $x$ under consideration, which gives an upper bound\footnote{under our assumptions $|DT_\epsilon(x)-I|\leq 2$, thus $|\mbox{det}(DT_\epsilon)| \leq (1+|DT_\epsilon(x)-P_x|)^n\leq 1+n2^n|DT_\epsilon(x)-P_x|$} for the Jacobians, leading to
$$\begin{array}{l}
L(S_{\epsilon},T_\epsilon(F_{\epsilon,\delta}))\leq (1-s)\int_{F_{\epsilon,\delta}}\int_{S_\epsilon}\frac{1+C_nR^{-1}\max\{3nr_{x,\epsilon}+2|x'|,3nr_{y,\epsilon}+2|y'|\}}{|T_\epsilon(x)-T_\epsilon(y)|^{n+s}}dxdy.	
\end{array}$$

Also by Proposition \ref{prop Properties} we may bound $|T_\epsilon(x)-T_\epsilon(y)|^{-n-s}$ from above with $|x-y|^{-n-s}$, times a factor given by Proposition \ref{prop Properties}, namely
\begin{equation*}
|T_\epsilon(x)-T_\epsilon(y)|^{-n-s}\leq [1-2R^{-1}\max\{3nr_{x,\epsilon}+2|x'|,3nr_{y,\epsilon}+2|y'|\}]^{-n-s}|x-y|^{-n-s}	
\end{equation*}

Putting these two bounds together we conclude that
\begin{equation*}
L(S_{\epsilon},T_\epsilon(F_{\epsilon,\delta})) \leq L(S_\epsilon,F_{\epsilon,\delta})
\end{equation*}
\begin{equation*}
+(1-s)\int_{F_{\epsilon,\delta}}\int_{S_\epsilon}\frac{1+C_nR^{-1}\max\{3nr_{x,\epsilon}+2|x'|,3nr_{y,\epsilon}+2|y'|\}}{|x-y|^{n+s}}dxdy.	
\end{equation*}

For the second term we also do a change of variables but only in $y$, using again Proposition \ref{prop Properties} to bound the Jacobian. However, we control the denominator in a different manner, arguing as in the proof of the claim in step 1 to conclude $|x-T_\epsilon(y)|\geq |x-y|$ for all $x,y \in V_{R,\epsilon}$ (remember $x \in D_\epsilon,y \in F_{\epsilon,\delta} \Rightarrow x,y \in V_{R,\epsilon}$). In this case,
\begin{equation*}
L(D_{\epsilon},T_\epsilon(F_{\epsilon,\delta})) =(1-s)\int_{T_\epsilon(F_{\epsilon,\delta})}\int_{D_\epsilon}\frac{1}{|x-y|^{n+s}}dxdy \leq
\end{equation*}
\begin{equation*}
\leq(1-s)\int_{F_{\epsilon,\delta}}\int_{D_\epsilon}\frac{1+C_nR^{-1}\max\{3nr_{x,\epsilon}+2|x'|,3nr_{y,\epsilon}+2|y'|\}}{|x-y|^{n+s}}dxdy.
\end{equation*}

The bounds we have obtained for $L(S_\epsilon,T_\epsilon(F_{\epsilon,\delta}))$ and $L(D_\epsilon,T_\epsilon(F_{\epsilon,\delta}))$ give us,
\begin{equation*}
\begin{array}{l}
L(A_{\epsilon},F_{\epsilon,\delta})-L(A_{\epsilon},T_\epsilon(F_{\epsilon,\delta})) \geq \\
-(1-s)\int_{F_{\epsilon,\delta}}\int_{A_\epsilon}\frac{C_nR^{-1}\max\{3nr_{x,\epsilon}+2|x'|,3nr_{y,\epsilon}+2|y'|\}}{|x-y|^{n+s}}dxdy.
\end{array}
\end{equation*}

Now, denote by $A^+_\epsilon$ the set $A_\epsilon \setminus A^-_\epsilon$, observe that $T_\epsilon(A^+_\epsilon)\subset A^-_\epsilon$. It is reasonable to try to control $L(A^+_\epsilon,F_{\epsilon,\delta})$ in terms of $L(A^-_\epsilon,F_{\epsilon,\delta})$, that is, using the change of variables $x \to T_\epsilon(x)$,
\begin{equation*}
L(A^+_\epsilon,F_{\epsilon,\delta}) = (1-s)\int_{F_{\epsilon,\delta}}\int_{A^+_\epsilon}\frac{1}{|x-y|^{n+s}}dxdy
\end{equation*}
\begin{equation*}
= (1-s)\int_{F_{\epsilon,\delta}}\int_{T_\epsilon(A^+_\epsilon)}\frac{\mbox{det}(DT_\epsilon(x))}{|T_\epsilon(x)-y|^{n+s}}dxdy
\end{equation*}
\begin{equation*}
\leq (1-s)\int_{F_{\epsilon,\delta}}\int_{A^-_\epsilon}\frac{1+C_nR^{-1}\max\{3nr_{x,\epsilon}+2|x'|,3nr_{y,\epsilon}+2|y'|\}}{|x-y|^{n+s}}dxdy,
\end{equation*}
and we  get 
\begin{equation*}
\begin{array}{l}
L(A_{\epsilon},F_{\epsilon,\delta})-L(A_{\epsilon},T_\epsilon(F_{\epsilon,\delta})) \geq\\ -(1-s)\int_{F_{\epsilon,\delta}}\int_{A^-_\epsilon}\frac{C_nR^{-1}\max\{3nr_{x,\epsilon}+2|x'|,3nr_{y,\epsilon}+2|y'|\}}{|x-y|^{n+s}}dxdy. 
\end{array}\end{equation*}
	
Note that the quantity in the numerator is of order at most $\delta$ (since the integration is over subsets of $B_\delta$), which is not sharp enough for our purposes. Nevertheless, when $x$ and $y$ are far apart the kernel is bounded so in that region we can get a better bound. Specifically, we have
\begin{equation*}
\max\{3nr_{x,\epsilon}+2|x'|,3nr_{y,\epsilon}+2|y'|\}\leq \left \{\begin{array}{rr}
6n|x-y|+4\epsilon & \mbox{ if } |x-y|\geq 2r_{x,\epsilon}\\
14n\epsilon & \mbox{ if } |x-y|< 2r_{x,\epsilon}
\end{array}\right. .
\end{equation*}

This follows at once from the inequalities
\begin{equation*}
r_{y,\epsilon}\leq r_{x,\epsilon}+|x-y|,\;\;|y'|\leq |x'|+|x-y|
\end{equation*}
\begin{equation*}
\Rightarrow 3nr_{y,\epsilon}+2|y'|\leq 	3nr_{x,\epsilon}+2|x'|+4n|x-y|,
\end{equation*}

Together with the fact that $r_{x,\epsilon}\leq \epsilon$ and $|x| \leq 2\epsilon$ for all $x\in A^-_\epsilon$, which was shown in Proposition \ref{prop Properties}. Then it follows that
\begin{equation*}
\begin{array}{l}
\int_{F_{\epsilon,\delta}}\frac{\max\{3nr_{x,\epsilon}+2|x'|,3nr_{y,\epsilon}+2|y'|\}}{|x-y|^{n+s}}dy\\
\leq C_n\left (\int_{F_{\epsilon,\delta} \setminus B_{2r_{x,\epsilon}}(x)}\frac{|x-y|+\epsilon}{|x-y|^{n+s}}dy+\int_{F_{\epsilon,\delta} \cap B_{2r_{x,\epsilon}}(x)}\frac{\epsilon}{|x-y|^{n+s}}dy	\right ).
\end{array}\end{equation*}

Integrating in $x\in A^-_\epsilon$ (and noting that $|x-y|^{-n-s+1}$ is locally integrable), this gives
\begin{equation*}
(1-s)\int_{A^-_\epsilon}\int_{F_{\epsilon,\delta}}\frac{\max\{3nr_{x,\epsilon}+2|x'|,3nr_{y,\epsilon}+2|y'|\}}{|x-y|^{n+s}}dydx	
\end{equation*}
\begin{equation*}
\leq C_n\left (\delta^{1-s}|A^-_\epsilon|+ \epsilon	L(A^-_\epsilon,F_{\epsilon,\delta})\right ),	
\end{equation*}
and using our previous bound for $I_0$ we get
\begin{equation}\label{eqn step two}
-I_0 \leq C_nR^{-1}\left (|A^-_\epsilon|\delta^{1-s}+\epsilon L(A^-_\epsilon,F_{\epsilon,\delta})\right ).
\end{equation}

This finishes the second step, next we show that $L(A^-_\epsilon,F_{\epsilon,\delta})$ is not too big.
\bigskip


\emph{Step 3}. Recall from Step 1 that $F_{\epsilon,\delta}\subset E$. Therefore $L(A^-_\epsilon,F_{\epsilon,\delta})\leq L(A^-_\epsilon,E)$, and this last quantity (by the supersolution assumption (\ref{eqn supersolution})) is not bigger than
\begin{equation*}
L(A^-_\epsilon,E^c\setminus A^-_\epsilon)+\rho(8\epsilon)(8\epsilon)^{n-s}.
\end{equation*}

Moreover, since $E^c \setminus A^-_\epsilon\subset V_{R,\epsilon}^c$ we conclude
\begin{equation*}
L(A^-_\epsilon,F_{\epsilon,\delta}) \leq L(A^-_\epsilon,V_{R,\epsilon}^c)+	\rho(8\epsilon)(8\epsilon)^{n-s}.
\end{equation*}

\emph{Claim \footnote{Compare with Lemma 5.2 in \cite{CRS10}.}}: There exists $\epsilon_0(n,s,\rho) \in (0,1)$ such that for any $\epsilon^*<\epsilon_0$ there exists some $\epsilon\in (\epsilon^*,2\epsilon^*)$ such that
\begin{equation*}
L(A^-_\epsilon,V_{R,\epsilon}^c)\leq C_{n,s}\epsilon^{-\frac{s+1}{2}}|A^-_\epsilon|.
\end{equation*}

Again following \cite{CRS10}, we (essentially) rewrite $L(A^-_\epsilon,V_{R,\epsilon}^c)$ in terms of
\begin{equation*}
a(r):=\mathcal{H}^{n-1}\left (A^-_\epsilon \cap \partial V_{R,r} \right ), \;\;0<r<\epsilon.
\end{equation*}

Indeed, if $x \in A^-_\epsilon\subset V_{R,\epsilon}$ then for $d(x):=d(x,\partial V_{R,\epsilon})$
\begin{equation*}
\int_{V^c_{R,\epsilon}}\frac{1}{|x-y|^{n+s}}dy\leq n|B_1|\int_{d(x)}^\infty\frac{1}{r^{n+s}}r^{n-1}dr	
\end{equation*} 
\begin{equation*}
\Rightarrow \int_{V^c_{R,\epsilon}}\frac{1}{|x-y|^{n+s}}dy\leq \frac{n|B_1|}{s\; d(x)^s}.
\end{equation*}

Thanks to this, we get the bound
\begin{equation*}
L(A^-_\epsilon,V^c_{R,\epsilon})\leq C_{n,s}\int_{A^-_\epsilon}\frac{1}{d(x,\partial V_{R,\epsilon})^s}dx,	
\end{equation*}
since $d(A^-_\epsilon,\partial V_{R,\epsilon})\leq \epsilon$ and $d(x,\partial V_{R,\epsilon}) \sim (\epsilon-r)$ if $x \in \partial V_{R,r}$ ($r<\epsilon$) this implies that
\begin{equation*}
L(A^-_\epsilon,V^c_{R,\epsilon})\leq C_{n,s}\int_0^\epsilon \frac{a(r)}{(\epsilon-r)^{s}}dr.	
\end{equation*}

Then, let us suppose that the \emph{Claim} is not true, then there exists a small $\epsilon^*$ such that the following inequality holds for every $\epsilon \in (\epsilon^*,2\epsilon^*)$ (for brevity we write $\eta= \frac{s+1}{2}$)
\begin{equation*}
\int_0^\epsilon \frac{a(r)}{(\epsilon-r)^{s}}dr> \epsilon^{-\eta}\int_0^\epsilon a(r) dr.	
\end{equation*}

We integrate this for $\epsilon \in (\epsilon^*,2\epsilon^*)$ and change the order of integration to get for $f(r)=|V_{R,r} \setminus E|$ defined for $r \in (0,\epsilon)$ (recall that $f'(r)=a(r)$ by the coarea formula)
\begin{equation*}
\int_{\epsilon^*}^{2\epsilon^*}f'(r)(2\epsilon^*-r)^{1-s}dr+\int_0^{\epsilon^*}f'(r)\left ( (2\alpha-r)^{1-s}-(\alpha-r)^{1-s}\right )dr 
\end{equation*}
\begin{equation*}
>\frac{1-s}{1-\eta}\left [ \int_{\epsilon^*}^{2\epsilon^*}f'(r)\left [(2\epsilon^*)^{1-\eta}-r^{1-\eta}\right ]dr+ \right .
\end{equation*}
\begin{equation*}
\left. +\int_0^{\epsilon^*}f'(r)\left ( (2\alpha)^{1-\eta}-\alpha^{1-\eta}\right )dr \right ].	
\end{equation*}

Observe that the second term on the left hand side is bounded from above by $2^{1-s}f(\epsilon^*)(\epsilon^{*})^{1-s}$ and the second term on the right hand side of the inequality is bounded from below by $C_{s}f(\epsilon^*)(\epsilon^*)^{1-\eta}$, this shows that if $\epsilon^*$ is smaller than some $\epsilon_0=\epsilon_0(s)$ then
\begin{equation*}
\int_{\epsilon^*}^{2\epsilon^*}f'(r)(2\epsilon^*-r)^{1-s}dr\geq C_{s}\int_{\epsilon^*}^{2\epsilon^*}f'(r)\left [(2\epsilon^*)^{1-\eta}-r^{1-\eta}\right ]dr
\end{equation*} 
\begin{equation*}
\Rightarrow (\epsilon^*)^{1-s}\int_{\epsilon^*}^{2\epsilon^*}f'(r)dr\geq C_{s,\eta} (\epsilon^*)^{1-\eta}\int_{\epsilon^*}^{2\epsilon^*}f'(r)dr.
\end{equation*} 

Observe that we can use the same proof of Proposition \ref{prop density} to get density bounds for $E^c$ with respect to the modified balls $V_{R,\epsilon}$. In particular $f(r)$ satisfies an integro-differential inequality like (\ref{eqn density ode}), from where it follows easily that $\int_{\epsilon^*}^{2\epsilon^*}f'(r)dr\neq 0$ for all positive and small enough $\epsilon^*$. This means we can cancel the common factors and get $(\epsilon^*)^{\eta-s}\geq C_{s}$. Recalling $\eta=\frac{1+s}{2}>s$ we get a contradiction if $\epsilon^*$ is too small and the claim is proved.\\ 

Using the bound for $L(A^-_\epsilon,F_{\epsilon,\delta})$ together with step 2 we obtain the bound
\begin{equation*}
-I_0\leq C_nR^{-1}\left (\delta^{1-s}|A^-_\epsilon|+\epsilon^{\frac{1-s}{2}}|A^-_\epsilon|+ \epsilon\rho(8\epsilon)\epsilon^{n-s}\right ),
\end{equation*}
for some $\epsilon \in (\epsilon^*,2\epsilon^*)$ as long as $\epsilon^*<\epsilon_0$, and as long as $\delta<r_0$ for some $r_0=r_0(n,s,\rho)$. Moreover, since $\epsilon<\delta$ we will for the sake of brevity bound the term $\epsilon^{\frac{1-s}{2}}|A^-_\epsilon|$ by $\delta^{\frac{1-s}{2}}|A^-_\epsilon|$. Putting all this together with the bound (\ref{eqn step one}) from step 1 the theorem is finally proved.
\end{proof}
}
As a corollary of this proof, we get a pointwise Euler-Lagrange inequality whenever \BlueComment{$\rho(t)\leq t^{p}$ ($p>n+s$). This is far from a sharp estimate, as it should hold for any $p>s$, however, this requires a more refined perturbation argument than that of Theorem \ref{thm second Euler Lagrange}. The main justification in presenting this corollary is that it will introduce an estimate used later in the proof \footnote{a posteriori one might use the knowledge that $\partial E$ is smooth to prove this for all $p>s$ using for instance perturbations by smooth vector fields.}.}

\begin{cor}\label{cor pointwise Euler Lagrange}
Suppose $E$ satisfies the supersolution property  (\ref{eqn supersolution}) in  $\Omega$ with respect to a $\rho$ such that \BlueComment{$\rho(t)\leq Ct^p, p>n+s$}. Then anytime $E$ has an interior tangent ball at a point $x_0 \in \Omega$ we have

\begin{equation*}
\int_{\mathbb{R}^n}\frac{\chi_E(x)-\chi_{E^c}(x)}{|x-x_0|^{n+s}}dx \leq \BlueComment{0}.
\end{equation*}

As in the \BlueComment{previous } theorem, we get the opposite inequality in the case of a subsolution and an exterior tangent ball.
\end{cor}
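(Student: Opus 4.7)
The plan is to apply the sharp Euler--Lagrange inequality of Theorem \ref{thm second Euler Lagrange}, divide by the measure of the perturbation, and pass to the limit. After translating, rotating and (if necessary) rescaling, one may assume $x_0 = 0$ and that the interior tangent ball is $B_{2R}(-2Re_n)$ with $R \geq 1$, so Theorem \ref{thm second Euler Lagrange} applies. Fixing $\delta < r_0(n,s,\rho)$ and $\epsilon^* < \delta/8$, the theorem produces some $\epsilon \in (\epsilon^*/2, \epsilon^*)$ with
\begin{equation*}
L(A_\epsilon, E \setminus B_\delta) - L(A_\epsilon, E^c \setminus B_\delta) \leq C_0 \rho(8\epsilon) \epsilon^{n-s} + C_0 R^{-1} \delta^{(1-s)/2} |A^-_\epsilon|.
\end{equation*}

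Expanding the $L$'s, the left-hand side equals $(1-s)\int_{A_\epsilon}\int_{\mathbb{R}^n \setminus B_\delta}\frac{\chi_E(y) - \chi_{E^c}(y)}{|x-y|^{n+s}}\,dy\,dx$. By Proposition \ref{prop Properties} we have $A_\epsilon \subset B_{8\epsilon}$, so for $x \in A_\epsilon$ and $y \notin B_\delta$ one has $|x-y|^{-(n+s)} = |y|^{-(n+s)}(1 + O(\epsilon/\delta))$ uniformly. I divide the displayed inequality by $|A_\epsilon|$ and let $\epsilon \to 0$ along the sequence produced by the theorem. The averaged left-hand side converges to $(1-s)\int_{\mathbb{R}^n \setminus B_\delta}\frac{\chi_E - \chi_{E^c}}{|y|^{n+s}}\,dy$; the last term on the right is already bounded by $C_0 R^{-1}\delta^{(1-s)/2}$ in view of $|A^-_\epsilon| \leq |A_\epsilon|$.

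To handle the first term on the right, I will use the lower bound $|A_\epsilon| \geq c(n,s,\rho)\,\epsilon^{2n}$: this follows from the inclusion $B_{\epsilon^2/(2R)} \setminus E \subset A_\epsilon$ in Proposition \ref{prop Properties} combined with the density estimate (Proposition \ref{prop density}) applied at the boundary point $0$. Together with $\rho(8\epsilon) \leq C\epsilon^p$ this yields $\rho(8\epsilon)\epsilon^{n-s}/|A_\epsilon| \leq C\epsilon^{p-n-s} \to 0$, which is exactly where the hypothesis $p > n+s$ enters. Passing to the limit therefore yields
\begin{equation*}
(1-s)\int_{\mathbb{R}^n \setminus B_\delta}\frac{\chi_E - \chi_{E^c}}{|y|^{n+s}}\,dy \leq C_0 R^{-1} \delta^{(1-s)/2},
\end{equation*}
and sending $\delta \to 0$ gives the claimed inequality. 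The subsolution/exterior-ball case is the mirror argument.

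The main obstacle is the $\epsilon^{2n}$ lower bound on $|A_\epsilon|$: this is exactly the eccentricity issue emphasized in the remarks before Theorem \ref{thm second Euler Lagrange}, since $A_\epsilon$ can have diameter $\sim \epsilon$ but its ``thick'' part lies in a strip of width $\sim \epsilon^2/R$, so its measure can be as small as $\epsilon^{2n}$. Matching this worst case against the modulus $\rho(t) \leq Ct^p$ pins the threshold at $p > n+s$.
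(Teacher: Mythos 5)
Your proposal is correct and matches the paper's own argument essentially step for step: normalize $x_0=0$, apply Theorem \ref{thm second Euler Lagrange}, replace the kernel $|x-y|^{-(n+s)}$ by $|y|^{-(n+s)}$ up to an error $O(\epsilon\delta^{-1-s})$ using $A_\epsilon\subset B_{8\epsilon}$, divide by the measure of the perturbation, invoke the lower bound of order $\epsilon^{2n}$ coming from Propositions \ref{prop density} and \ref{prop Properties} so that $\rho(8\epsilon)\epsilon^{n-s}$ is negligible precisely because $p>n+s$, and then let $\epsilon\to 0$ followed by $\delta\to 0$. Your closing remark about the eccentricity of $A_\epsilon$ correctly identifies why the threshold $p>n+s$ appears, exactly as in the paper's preceding remarks.
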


\textbf{Remark.} Given $x_0 \in \partial E$, the quantity

\begin{equation*}
-(1-s)\int_{\mathbb{R}^n}\frac{\chi_E(x)-\chi_{E^c}(x)}{|x-x_0|^{n+s}}dx.
\end{equation*}

is what is known as the \textbf{non-local mean curvature} of $\partial E$ at $x_0$. \BlueComment{It has many features similar to the usual mean curvature, } for instance, thanks to the invariance of the kernel $|x-y|^{-n-s}$ under rotations, it is easy to see that a ball of radius $r$ has constant non-local mean curvature and that this constant is proportional to $r^{-s}$. Further, bounded convex sets $E$ would have boundaries with positive non-local mean curvature.

\begin{proof}
\BlueComment{Without loss of generality, we may take $x_0=0$. Let $\epsilon$ and $\delta$ satisfy the hypothesis of Theorem \ref{thm second Euler Lagrange}, observe that
\begin{equation*}
\left | \frac{L(A_\epsilon,E\setminus B_{\delta})-L(A_{\epsilon},E^{c} \setminus B_{\delta})}{|A_\epsilon|}-\int_{B_\delta^c}\frac{\chi_E(x)-\chi_{E^c}(x)}{|x|^{n+s}}dx\right |
\end{equation*}
\begin{equation*}
\leq \int_{B_\delta^c}\frac{C_n\epsilon}{|x|^{n+s}}dx \leq C_n\epsilon\delta^{-1-s}.
\end{equation*}

This follows from the Lipschitz bound for $|x-y|^{-n-s}$ in each of the sets $\{|x-y|\geq \delta\}$ and the fact that $A_\epsilon \subset B_{8\epsilon }$. Then Theorem \ref{thm second Euler Lagrange} implies that
\begin{eqnarray*}
\int_{B_\delta^c}\frac{\chi_E(x)-\chi_{E^c}(x)}{|x|^{n+s}}dx \leq C_n\epsilon\delta^{-1-s}+C_0\frac{1}{R}\left (f(8\epsilon)\epsilon^{n-s}|A^-_\epsilon|^{-1}+\delta^{\frac{1-s}{2}} \right ).
\end{eqnarray*}

Recall that by Propositions \ref{prop density} and \ref{prop Properties} we have $|A^-_\epsilon|\geq C_0\epsilon^{2n}$, therefore
\begin{eqnarray*}
\int_{B_\delta^c}\frac{\chi_E(x)-\chi_{E^c}(x)}{|x|^{n+s}}dx \leq C_n\epsilon\delta^{-1-s}+C_0\frac{1}{R}\left (C_0f(8\epsilon)\epsilon^{-n-s}+\delta^{\frac{1-s}{2}} \right ),
\end{eqnarray*}
since $f(8\epsilon)\leq C_n\epsilon^{p},\;p>n+s$ we are free to let $\epsilon\to 0$ with $\delta>0$ fixed, getting
\begin{equation*}
\int_{B_\delta^c}\frac{\chi_E(x)-\chi_{E^c}(x)}{|x|^{n+s}}dx \leq C_0\frac{1}{R}\delta^{1-s},
\end{equation*}
taking the limit as $\delta\to 0$ the corollary is proved.}
\end{proof}

To finish the section, we prove the Harnack estimate. This is the key tool needed for the improvement of flatness argument. We will use the following definition \BlueComment{(see also discussion at the beginning of Section 6)}:

\begin{DEF}
A set $E \subset \mathbb{R}^n$ is said to be flat of order $a$ at $x_0$ with respect to \BlueComment{$\gamma:[0,1] \to \mathbb{R}_+$,  if $a=\gamma(2^{-k})$ for some $k \in \mathbb{N}$ } and there is a sequence of unit vectors $\{e_l\}_{\BlueComment{l}}$ such that
\begin{equation*}
\partial E \cap B_{2^l}(x_0) \subset \{ |(x-x_0) \cdot e_l| \leq 2^l\gamma(2^{l-k})\}, \;\; \forall \; \BlueComment{l: \; 0 \leq l \leq k}.
\end{equation*}
\BlueComment{Remark. As we are assuming that $\rho$ is defined in $(0,\delta)$ with $\delta>1$ (see remark before Definition 2.5) we may take the auxiliary function $\hat\rho$ (again see Definition 2.5) as our $\gamma$ above.}

\end{DEF}

For the purposes of the next section (namely showing regularity) it will also be convenient to state the Harnack estimate in a different scale, i.e. scaling the ball of radius $B_{2^{-k}}(0)$ into the ball $B_1(0)$, if a set $E$ is $(J_s,\rho,d)$-minimal, then the rescaled set will be $(J_s.\rho_k,d_k)$-minimal, for $\rho_k(t)=\rho(2^{-k}t)$ and $d_k=\BlueComment{2^{k}}d$.	


\begin{thm}\label{thm Harnack estimate}(Partial Harnack)
For any $\rho$ satisfying assumptions A1-A3, $d>0$ and $s\in(0,1)$ there are positive constants $k_0(n,s,\rho,d)$ and $\delta_0(n,s,\rho,d)$ with the following property:\\

Suppose that \BlueComment{$E$ is $(J_s,\rho_k,d_k$)-minimal } in $B_{2^k}(0)$, flat of order $a=\hat\rho(2^{-k})$ with respect to $\hat\rho$ and that $0 \in \partial E$. Then, if $k>k_0$ the following two inclusions hold:
\begin{equation*}
E \cap B_{\delta_0} \subset \{ x \cdot e_0 <a(1-\delta_0^2)\},\;\;B_{\delta_0} \cap \{ x \cdot e_0 <-a(1-\delta_0^2)\} \subset E.
\end{equation*}

Here $\rho_k(t)=\rho(2^{-k}t), d_k:=2^{-k}d$ and $e_0$ \BlueComment{comes from the flatness hypothesis.}
\end{thm}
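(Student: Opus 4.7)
The plan is to mimic the partial Harnack argument of \cite{CRS10}, adapted to the almost-minimal setting via the refined Euler--Lagrange inequality of Theorem \ref{thm second Euler Lagrange} and the hierarchical flatness calibrated by $\hat\rho$. I would argue by contradiction, proving the two inclusions separately (they are symmetric, one using the supersolution and the other the subsolution property). After normalization assume $\{x\cdot e_0 < -a\}\cap B_1 \subset E$ and $\{x\cdot e_0 > a\}\cap B_1 \subset E^c$, and suppose some $x^* \in E \cap B_{\delta_0}$ satisfies $x^*\cdot e_0 \geq a(1-\delta_0^2)$; the task is to derive a contradiction once $\delta_0$ is small and $k$ large.

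First I would slide a barrier. Take a ball of radius $R \sim \delta_0^{-1}$ initially deep inside $E$ in the lower region $\{x\cdot e_0 \ll 0\}$ and translate it upward along $e_0$ until it first touches $\partial E$ at some point $\bar x \in B_{C\delta_0}$. Because $x^*\in E$ sits near the top of the strip, a geometric comparison gives $\bar x\cdot e_0 \geq a(1-\delta_0^2) - O(\delta_0^2/R)$. At $\bar x$, $E$ has an interior tangent ball of radius $\geq 2R$, so Theorem \ref{thm second Euler Lagrange} applies; dividing its conclusion by $|A^-_\epsilon|$, using the volume density bound $|A^-_\epsilon|\gtrsim \epsilon^{2n}$ (Propositions \ref{prop density} and \ref{prop Properties}), and approximating as in the proof of Corollary \ref{cor pointwise Euler Lagrange}, one obtains
$$\int_{B_\delta(\bar x)^c} \frac{\chi_E(x)-\chi_{E^c}(x)}{|x-\bar x|^{n+s}}\,dx \,\leq\, C R^{-1}\delta^{(1-s)/2} + \mathrm{err}_k,$$
where $\mathrm{err}_k$ comes from the deficit $\rho_k(8\epsilon)\epsilon^{n-s}/|A^-_\epsilon|$ and is absorbed by $\hat\rho(2^{-k})^m$ thanks to Lemma \ref{lem rho doubling}.

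The heart of the argument is to bound the left-hand side from below using the hierarchical flatness. I would decompose $\mathbb{R}^n\setminus B_\delta(\bar x)$ dyadically into annuli $B_{2^{l+1}}\setminus B_{2^l}$ for $\log_2\delta \leq l \leq k$, plus a tail outside $B_{2^k}$. At each scale $2^l$, the flatness places $\partial E$ in a strip of half-width $2^l\hat\rho(2^{l-k})$ perpendicular to some $e_l$, and successive $e_l$ differ by at most a multiple of $\hat\rho(2^{l-k})$ (a standard consequence of the nested flatness). Since $\bar x$ lies above the strip centerline by an amount $\geq a(1-\delta_0^2)$, each annulus is asymmetrically split: more of $E$ below $\bar x$ than $E^c$ below, and vice-versa above. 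Integrating the anti-symmetric kernel against this vertical shift contributes $\gtrsim a\cdot 2^{-ls}$ per annulus (with errors controlled by the strip widths), and summing over $l$ yields a lower bound $\gtrsim a\delta^{-s}$. Combined with the upper bound above, with $R\sim \delta_0^{-1}$ and a suitable choice of $\delta$, the inequality forces $a \lesssim \delta_0^{s+(1-s)/2} + \mathrm{err}_k$, which is violated once $\delta_0$ is small enough and $k_0$ large enough.

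The main obstacle is the dyadic summation itself: controlling the drift of the vectors $e_l$ across scales and keeping track of the cancellation of the anti-symmetric kernel against the shifted strip centerlines. The calibration of flatness with respect to $\hat\rho$, together with Lemma \ref{lem rho doubling}, is precisely what makes the almost-minimality error $\mathrm{err}_k$ strictly subdominant to $a = \hat\rho(2^{-k})$. A minor technicality is that Theorem \ref{thm second Euler Lagrange} only provides a valid $\epsilon$ in some interval $(\epsilon^*/2,\epsilon^*)$ rather than every $\epsilon$, but the argument only requires one such $\epsilon$, coupled to the dyadic scale $\delta$.
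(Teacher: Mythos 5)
Your skeleton (contradiction, sliding barrier, the Euler--Lagrange inequality of Theorem \ref{thm second Euler Lagrange} normalized by $|A^-_\epsilon|\gtrsim\epsilon^{2n}$, Lemma \ref{lem rho doubling} to absorb the almost-minimality error) matches the paper, but the crucial quantitative step is attributed to a mechanism that cannot work. Your dyadic-annulus computation does not produce a lower bound $\gtrsim a\,\delta^{-s}$: at scale $2^l$ the flatness hypothesis only confines $\partial E$ to a slab of half-width $2^l\hat\rho(2^{l-k})$, which is at least $2^l a$ and hence much larger than the vertical offset $\sim a$ of the contact point for every $l\geq 1$ (and equal to it for $l=0$); inside that slab nothing is known about $E$, so the \emph{signed} contribution of each annulus is uncontrolled, and only its absolute value is bounded, by $C\hat\rho(2^{l-k})2^{-ls}$. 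Summing these (with the doubling of $\hat\rho$ coming from A3) is exactly how the paper uses the flatness hierarchy: as a far-field \emph{error} of size $C_0a$, as in (\ref{eqn harnack estimate four}), never as the source of positivity. The positive term in the paper is local, at scale $\sim a$: one slides an interior ball of radius $R=a^{-1}$ (not $\delta_0^{-1}$), so that $\chi_E-\chi_{E^c}\geq\chi_B-\chi_{B^c}$, and the decisive gain is the extra set $(E\setminus B)\cap B_{a/2}(0)$, which lies above the contact level because $0\in\partial E$ and has measure $\geq c_0a^n$ by the density estimate of Proposition \ref{prop density}; sitting at distance $\sim\delta\sim a$ from the contact point it yields the lower bound of order $\delta^{-s}$ (not $a\delta^{-s}$), as in (\ref{eqn harnack estimate three}). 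Without this density-based term your chain of inequalities cannot close.

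Two further problems. First, even granting your lower bound, your final inequality $a\lesssim\delta_0^{s+(1-s)/2}+\mathrm{err}_k$ is not a contradiction: $a=\hat\rho(2^{-k})\to0$ as $k\to\infty$, so in the regime $k\geq k_0$ it is satisfied rather than violated. In the paper the small parameters are tied together, $\epsilon\sim\delta\sim a$, and after dividing by $|A_\epsilon|\geq c_0\epsilon^{2n}$ the combined inequality reads $\delta^{-s}\leq C_0(\delta^{m-n-s}+1)$ (Lemma \ref{lem rho doubling} and $m>n+s$ make the $\rho$-term harmless), i.e.\ a large quantity bounded by a universal constant, which fails for small $\delta$. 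Second, your barrier is set up for the wrong case: if the first inclusion fails (a point $x^*\in E$ too high), an interior ball slid upward is stopped by points of $E^c$, which by flatness may already occur at height $-a$; the presence of $x^*\in E$ high up does not push the contact point up, so $\bar x\cdot e_0\geq a(1-\delta_0^2)-O(\delta_0^2/R)$ does not follow from your setup. For that case one must slide an \emph{exterior} ball from above and use the subsolution version of Theorem \ref{thm second Euler Lagrange} (equivalently, apply the paper's argument to $E^c$); the interior ball/supersolution pairing is the one for the second inclusion.
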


\BlueComment{Remark. This theorem says that conditioned to $\partial E$ being ``flat enough'' near zero one has a Harnack inequality. Note that unlike the usual Harnack inequality for regular elliptic equations, we cannot reapply it over and over at finer scales (since the flatness condition might fail in a smaller ball), thus the term ``Partial Harnack''. We get back to this point in Section 6.}

\begin{proof}[Proof of Theorem \ref{thm Harnack estimate}]

Suppose that for given $k$, $a=\hat\rho(2^{-k})$ and $\delta_0$ there is a set $E$ satisfying the assumptions and such that one of the two inclusions does not hold. We may assume that $e_0$ is the positive direction on the $x_n$-axis. With this setup, we shall get a contradiction by showing that if $a$ and $\delta$ are picked universally small then $E$ will ``stretch'' too much, \BlueComment{forcing it to contradict the Euler-Lagrange inequalities (Theorem \ref{thm second Euler Lagrange}).}\\

Without loss of generality (the other case is dealt with similarly), suppose that
\begin{equation}\label{eqn harnack estimate two}
B_{\delta} \cap \{ x_n<-a(1-\delta^2)\} \not\subset E.
\end{equation}

Slide from below a ball $B$ of radius $R \BlueComment{ = } a^{-1} \geq 1$ until it becomes tangent to $\partial E$ from the interior\BlueComment{, which we can do thanks to the flatness assumption. Since we are assuming (\ref{eqn harnack estimate two}), without loss of generality the ball must be tangent at least at one point $z$ such that}
\begin{equation*}
z \in B_{\delta} \cap \{x_n=-a(1-\delta^2) \}.
\end{equation*}

We are under the assumptions of Theorem \ref{thm second Euler Lagrange}, \BlueComment{ thus using the same $\delta$ as in (\ref{eqn harnack estimate two})  and any $\epsilon$ so that $\epsilon$ and $\delta$ are allowed, we have
\begin{equation*}
L(A_\epsilon,E \setminus B_{\delta}(z)) - L(A_\epsilon,E^c \setminus B_{\delta}(z)) \leq C_0\left ( \rho(2^{-k}8\epsilon)\epsilon^{n-s}+a\delta^{\frac{1-s}{2}}|A^-_\epsilon|\right ), 
\end{equation*}
our goal is to obtain an opposite bound. Note that $B \subset E$ so for any $y \in \mathbb{R}^n$ we have}
\begin{equation*}
\int_{B_{\frac{1}{2}} \BlueComment{\setminus B_\delta(z)}}\frac{\chi_E(x)-\chi_{E^c \setminus A_\epsilon}(x)}{|x-y|^{n+s}}dx \geq \int_{B_{\frac{1}{2}}\BlueComment{\setminus B_\delta(z)}}\frac{\chi_B(x)-\chi_{B^c \setminus A_\epsilon}(x)}{|x-y|^{n+s}}dx+\int_{B_{\frac{1}{2}}\BlueComment{\setminus B_\delta(z)}}\frac{\chi_{E \setminus B}(x)}{|x-y|^{n+s}}dx.
\end{equation*}

\BlueComment{Since $0 \in \partial E$ and $z \in B_\delta \cap \{ x_n < -a(1-\delta^2)\}$, the ball $B_{a+2\delta}(y)$ contains $E\cap B_a$ any time $|z-y|<\delta$. This means that the function $|x-y|^{-(n+s)}$ must be greater than $c_n(\delta+a)^{-(n+s)}$ in $E \cap B_{\frac{a}{2}}$, which is a subset of $(E\setminus B) \cap (B_{\frac{1}{2}} \setminus B_\delta(z))$ if $a$ and $\delta$ are small enough and $a\sim \delta$ (recall $z$ lies on $\{x_n = -a(1-\delta^2) \}$ so $B_\delta(z)$ and $B$ are away from the origin). Thanks to Proposition \ref{prop density} this set has measure no less than $c_0a^n$ when $a$ is small. Hence we have
\begin{equation*}
\int_{B_{\frac{1}{2}} \setminus B_\delta(z)}\frac{\chi_{E \setminus B}(x)}{|x-y|^{n+s}}dx \geq c_0(\delta+a)^{-(n+s)}a^{n},\;\; \forall \; y \in B_{\frac{\delta}{2}}(z).
\end{equation*}
}

Additionally, we can check by direct computation that
\BlueComment{
\begin{equation*}
\int_{B_{\frac{1}{2}} \setminus B_\delta(z)}\frac{\chi_B(x)-\chi_{B^c}(x)}{|x-y|^{n+s}}dx \geq -C_n\delta^{-s} \;\; \forall \; y \in B_{\frac{\delta}{2}}(z).
\end{equation*}

\BlueComment{Then, since $\chi_E(x)-\chi_{E^c \setminus A_\epsilon}(x)\geq \chi_B-\chi_{B^c}$  we conclude } there is a universal $C_0$ such that whenever $C_0^{-1}a \leq \delta \leq C_0a$ we have the inequality
\begin{equation}\label{eqn harnack estimate three}
\int_{B_{\frac{1}{2}} \setminus B_\delta(z)}\frac{\chi_E(x)-\chi_{E^c}(x)}{|x-y|^{n+s}}dx \geq \BlueComment{C_0^{-1}\delta^{-s}} \;\;\; \forall \; y \in B_{\frac{\delta}{2}}(z).
\end{equation}}

Next, we estimate this same integral outside $B_{\frac{1}{2}}(0)$. Let $|y| <1/4$ and $r>0$ with $2^{l-1}\leq r < 2^l$  for some $l$ such that $0\leq l \leq k$. Since $E$ is flat of order $a$ with respect to $\hat\rho$, it is not hard to prove that
\begin{equation*}
\left | \int_{|x-y|=r}\frac{\chi_E(x)-\chi_{E^c}(x)}{|x-y|^{n+s}}dS \right | \leq \frac{aC(n,\rho)}{r^{1+\eta_0}},
\end{equation*}
\BlueComment{for some $\eta_0=\eta_0(n,s,\rho)$, this is in part due to assumption A3}. We integrate this with respect to $r$,
\begin{equation*}
\left | \int_{B_{\frac{1}{2}}(0)^c}\frac{\chi_E(x)-\chi_{E^c}(x)}{|x-y|^{n+s}}dx \right |\leq \int_{2^{-1}}^{2^{k}}\frac{aC(n,\rho)}{r^{1+\eta_0}}dr + C_n\int_{2^{k}}^\infty\frac{1}{r^{1+s}}dr
\end{equation*}
\begin{equation*}
\leq \left ( \int_{2^{-1}}^\infty C_0r^{-1-\eta_0}dr\right ) a + \frac{C_n}{s}2^{-sk}, \; \forall\; y \in B_{\frac{1}{4}}.
\end{equation*}

Since $k\geq k_0$, choosing $k_0$ large enough (equivalently, choosing $a$ small enough) to get $2^{-sk} \leq a\ =\hat\rho(2^{-k_0})$ we obtain
\begin{equation}\label{eqn harnack estimate four}
\left | \int_{B_{\frac{1}{2}}(0)^c}\frac{\chi_E(x)-\chi_{E^c}(x)}{|x-y|^{n+s}}dx \right | \leq C_0a, \; \forall y\; \in B_{\frac{1}{4}}.
\end{equation}

The estimates (\ref{eqn harnack estimate three}) and (\ref{eqn harnack estimate four}) put together give us (as long as $\delta<<1$)
\begin{equation*}
\int_{B_{\delta}(z)^c}\frac{\chi_E(x)-\chi_{E^c}(x)}{|x-y|^{n+s}}dx \geq \delta^{-s}-\BlueComment{C_0(n,s,\rho)}a,\;\forall\; y \in B_{\frac{\delta}{2}}(z).
\end{equation*}

From the way we picked $\epsilon$ we \BlueComment{can make sure } that $A_\epsilon \subset B_\frac{\delta}{2}$, this means that we can integrate the above inequality for $y \in A_\epsilon$ and reach the lower bound:
\begin{equation*}
L(A_\epsilon,E \setminus B_{\delta}(z_\epsilon)) - L(A_\epsilon,E^c \setminus B_{\delta}(z_\epsilon)) \geq \delta^{-s}|A_\epsilon|-C(n,s,\rho)a|A_\epsilon|.
\end{equation*}
\smallskip

It is worth emphasizing that this last estimate is independent of the Euler Lagrange inequalities, it follows \emph{only} from the flatness hypothesis\BlueComment{, the density estimate from Proposition \ref{prop density} and the assumption that (\ref{eqn harnack estimate two}) does \emph{not} hold}. Putting these two bounds together we see that for all $\epsilon$, $\delta$ and $a$ under consideration we have
\begin{equation*}
\BlueComment{\delta^{-s}|A_\epsilon|-C(n,s,\rho)a|A_\epsilon| \leq C_0\left (\rho(2^{-k}8\epsilon)\epsilon^{n-s} +\delta^{1+\frac{1-s}{2} }|A^-_\epsilon|\right )}.
\end{equation*}

\BlueComment{We now apply Lemma \ref{lem rho doubling}, which says that if $8\epsilon<1$ then
\begin{equation*}
\rho(2^{-k}8\epsilon)\leq \hat\rho(2^{-k})^m=a^m,	
\end{equation*}
so if $a, \epsilon$ and $\delta$ are universally small ($\epsilon$ being admissible for Theorem \ref{thm second Euler Lagrange}),
\begin{equation*}
\delta^{-s} \leq C_0\left [ a^m\epsilon^{n-s}|A_\epsilon|^{-1}+1\right ].	
\end{equation*}

To finish, recall that $|A_\epsilon|\geq c_0 \epsilon^{2n}$ (Propositions \ref{prop density} and \ref{prop Properties}) and note we can finally pick all parameters so that $\epsilon \sim \delta \sim a$, we get
\begin{equation*}
\delta^{-s}\leq C_0 \left [\delta^{m-n-s}+1 \right ],
\end{equation*}
since $m>n+s$, this means that
\begin{equation*}
\delta^{-s}\leq C_0.
\end{equation*}
}

Choosing $\delta$ (universally) small we get a contradiction. This implies the estimate since we have shown that the first inclusion (\ref{eqn harnack estimate two}) must hold with some (universal) $\delta$ when $k_0$ is (universally) large, since $k_0$ grows as $a \to 0$. The other inclusion is dealt with in the same way, except we must slide a ball $B$ from \emph{above} and use the subsolution version of the Euler-Lagrange inequality. \end{proof}
\section{Flat boundaries are $C^1$}\label{sec flat smooth}

The concept of improvement of flatness is nowadays well understood, it arises not only in the regularity theory of minimal surfaces but on free boundary problems as well. To achieve it here, we make use of a method developed by Savin to address the regularity of level sets in phase transitions \cite{S09}. See also \cite{S07} where a similar idea is applied to elliptic equations.\\ 

Heuristically it goes as follows: by a standard argument, if flat boundaries are not $C^{1}$ then there is a sequence of $(J_s,\rho,d)$ minimal sets $E_k$, with vanishing flatness in $B_{1/2^k}$, and such that $\partial E_k$ cannot be trapped inside a flatter cylinder in $B_{1/2^{k+1}}$.  For this sequence there is a partial Harnack estimate (in our case Theorem \ref{thm Harnack estimate}) which, as we will see below, implies that if we dilate each surface in the direction in which they become flat, then a subsequence of them converges uniformly to the graph of a continuous function u with controlled growth at infinity. This function is then shown to solve the equation $(-\Delta)^{\frac{1+s}{2}}u=0$ in all of $\mathbb{R}^n$ which shows it must be linear. This must give a contradiction, since the uniform convergence and smoothness of the limit force the sequence to lie eventually inside flatter and flatter cylinders, against our initial assumption. Therefore the original sequence cannot exist.\\

There are new technical problems in carrying out this procedure for non-local minimizers, and they were dealt with in \cite{CRS10}. The main difficulty comes from the influence of far away terms in all the estimates. This would not be a problem if we knew that the surface was flat enough away from a neighborhood of $0$, but after scaling a much smaller neighborhood of $0$ into one of size of order 1 we are sending new parts of the surface far away and the non-local terms could become relevant again. In \cite{CRS10} it is shown this is not really a problem, by taking into account not only a small neighborhood of $0$ but also larger and larger balls where the flatness grows until it becomes of order one. This is actually the same as the standard flatness hypothesis, except we are keeping track of the information between the scale of order 1 and the smaller scale. Later in the argument, as we rescale, this information allows us to control the non-local terms. This idea was already used in the statement and proof of the Partial Harnack estimate (see also Definition 5.6). \\

In concrete terms, the main result of this section, which implies Theorem \ref{thm main} as in \cite{CRS10}, is the following:

\begin{thm}\label{thm improv}
Let $E$ be $(J_s,\rho,d)$-minimal in $B_1$, $0\in\partial E \cap B_1$ \BlueComment{and $\rho$ satisfying A1-A3}. There is a universal $k_0=k_0(n,s,\rho)$ such that if for some $k \geq k_0$  we have a sequence of inclusions
\begin{equation*}
\partial E \cap B_{2^{-l}} \subset \left \{ |x \cdot e_l| \leq 2^{-l} \hat{\rho}(2^{-l}) \right \} \;\;\;\; \forall l \leq k
\end{equation*}
for some sequence of unit vectors $\left \{ e_l\right \}_{l \leq k}$, then we can find unit vectors $e_l$ for all $l \geq k$ such that the inclusions above remain valid for all $l$. \BlueComment{Recall $\hat\rho$ is the auxiliary function defined in Section 2.}
\end{thm}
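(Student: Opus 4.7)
The plan is to follow Savin's compactness and improvement-of-flatness scheme as adapted to the non-local setting in \cite{CRS10}. I argue by contradiction against the whole statement: suppose it fails, so there exist $(J_s,\rho,d)$-minimal sets $E_m$ and integers $k_m\to\infty$ for which the nested flatness hypothesis holds through scale $2^{-k_m}$ but no unit vector $e_{k_m+1}$ realizes the inclusion at the next scale $2^{-k_m-1}$. Rescale by $2^{k_m}$: each $\widetilde E_m := 2^{k_m}E_m$ is $(J_s,\rho_{k_m},2^{k_m}d)$-minimal in $B_{2^{k_m}}$ and flat of order $a_m := \hat\rho(2^{-k_m})$ with respect to $\hat\rho$, in the sense of the definition preceding Theorem~\ref{thm Harnack estimate}. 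Let $v_m^\pm$ denote the upper and lower vertical envelopes of $\partial\widetilde E_m$ relative to $e_{k_m}$, normalized by dividing by $a_m$. Iterated application of Partial Harnack (Theorem~\ref{thm Harnack estimate}) at dyadic scales produces a uniform H\"older-type modulus of continuity for $v_m^\pm$ on compact sets, while the flatness condition at coarser rescaled scales $B_{2^j}$, combined with Lemma~\ref{lem rho doubling} and assumption A3, yields the growth bound $|v_m^\pm(x')|\leq C(1+|x'|^{1+\eta})$ for some $\eta=\eta(n,s,\rho)$ strictly smaller than $s$. Together with the uniform convergence of almost minimal boundaries (the corollary following Proposition~\ref{prop density}), Arzel\`a--Ascoli and a diagonal extraction deliver a subsequence for which $v_m^+$ and $v_m^-$ both converge locally uniformly to a common continuous function $v:\mathbb{R}^{n-1}\to\mathbb{R}$ with $v(0)=0$ and the same growth bound.

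The critical step, which I expect to be the main obstacle, is to identify $v$ as a viscosity solution of $(-\Delta)^{(1+s)/2}v=0$ on all of $\mathbb{R}^{n-1}$. If a smooth test $\varphi$ touches $v$ strictly from below at $x_0$ with $(-\Delta)^{(1+s)/2}\varphi(x_0)>0$, then for large $m$ one builds a perturbation of $\widetilde E_m$ whose boundary near the lift of $x_0$ approximates the graph of $a_m\varphi$, and one invokes the supersolution Euler--Lagrange inequality (Theorem~\ref{thm second Euler Lagrange}) using the sliding-ball perturbation $A_\epsilon$ of Section~5. After dividing by $a_m$ and passing to the limit, the local contribution $L(A_\epsilon,E\setminus B_\delta)-L(A_\epsilon,E^c\setminus B_\delta)$ converges to a positive multiple of $(-\Delta)^{(1+s)/2}\varphi(x_0)$, while the far-field contribution vanishes thanks to the sub-$(1{+}s)$ growth of $v$ combined with the flatness at all coarser scales. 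The error term $\rho_{k_m}(8\epsilon)\epsilon^{n-s}$ in the Euler--Lagrange inequality is bounded above by $a_m^{m}\epsilon^{n-s}$ via Lemma~\ref{lem rho doubling}, so after normalization by $a_m$ it becomes negligible precisely because $m>n+s$ was imposed in A3; this is the single point in the argument where that refinement of A3 is used in an essential way. The analogous subsolution version yields the opposite inequality, so $v$ is indeed $(1{+}s)/2$-harmonic.

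A Liouville theorem for the fractional Laplacian applied to globally defined $(1{+}s)/2$-harmonic functions of growth strictly less than $|x'|^{1+s}$ forces $v$ to be affine, hence $v(x')=a\cdot x'$ for some $a\in\mathbb{R}^{n-1}$. Uniform convergence on $B_{1/2}$ then reads $|v_m^\pm(x')-a\cdot x'|\leq\eta_m$ with $\eta_m\to 0$; tilting $e_{k_m}$ by the direction perpendicular to the graph of $x'\mapsto a\cdot x'$ produces a unit vector $e_{k_m+1}$ such that, after undoing the rescaling,
\begin{equation*}
\partial E_m\cap B_{2^{-k_m-1}}\subset\{\,|x\cdot e_{k_m+1}|\leq \eta_m\cdot 2^{-k_m-1}\hat\rho(2^{-k_m})\,\}.
\end{equation*}
Since $\hat\rho(2^{-k_m})/\hat\rho(2^{-k_m-1})$ is bounded independently of $m$ (which follows from the doubling-type estimate of Lemma~\ref{lem rho doubling}), for $m$ sufficiently large one has $\eta_m\hat\rho(2^{-k_m})\leq\hat\rho(2^{-k_m-1})$, contradicting the assumption that no such $e_{k_m+1}$ existed and completing the proof.
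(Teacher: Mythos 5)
Your proposal is correct and follows essentially the same route as the paper: rescaling to the $(J_s,\rho_k,d_k)$-minimal setting, a compactness argument via the Partial Harnack estimate (the paper's Corollary \ref{cor Holder} and Lemma \ref{lem blow up}), identification of the blow-up limit as a viscosity solution of $(-\Delta)^{\frac{1+s}{2}}u=0$ through the Euler--Lagrange inequalities with the error absorbed by Lemma \ref{lem rho doubling} (Lemma \ref{lem Liouville}), a Liouville theorem forcing linearity, and the doubling property of $\hat\rho$ to close the contradiction exactly as in the proof of Theorem \ref{thm improv prime}. The only inaccuracy is incidental: the condition $m>n+s$ in A3 is not used solely in the linearization step, since it also enters essentially in the proof of the Partial Harnack estimate (where one needs $\delta^{m-n-s}$ bounded), but this does not affect the validity of your argument.
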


To understand the asymptotic behavior of these sets in a small ball, we are going to change the scale by a factor of $2^k$ to make the ball $B_{2^{-k}}$ into the unit ball. Then the previous theorem is equivalent to:

\addtocounter{thm}{-1} 
\begin{thm}{\textbf{(Rescaled)}}\label{thm improv prime}
Let $E$ be $(J_s,\rho_k,d_k)$-minimal in $B_{2^k}$ with $\rho_k(r)=\rho(2^{-k}r)$, $d_k=2^kd$, $0\in\partial E$ \BlueComment{ with $\rho$ satisfying A1-A3}. There is a universal $k_0=k_0(n,s,\rho)$ such that if for some $k \geq k_0$, the set $E$ is $a$-flat at $0$ with respect to $\hat\rho$, $a=\hat\rho(2^{-k})$, then we can find a unit vector $e_{\BlueComment{-1}}$ such that we have the inclusion

\begin{equation}\label{eqn main thm 2}
\partial E \cap B_{1/2} \subset \left \{ |x \cdot e_{-1}| \leq 2^{-1}\hat{\rho}(2^{-1-k}) \right \}
\end{equation}
\end{thm}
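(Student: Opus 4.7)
The strategy is a compactness and blow-up argument in the spirit of \cite{S09} and \cite{CRS10}. I would argue by contradiction: assume the conclusion fails, so there is a sequence $k_j \to \infty$ and sets $E_j$, each $(J_s,\rho_{k_j},d_{k_j})$-minimal in $B_{2^{k_j}}$, each $a_j$-flat at $0$ with respect to $\hat\rho$ (where $a_j=\hat\rho(2^{-k_j})$), yet none satisfying (\ref{eqn main thm 2}) for any unit vector $e_{-1}$. After rotating so that the flat direction is $e_n$, the natural object to study is the vertical dilation
$$F_j := \bigl\{(x', x_n/a_j) : (x',x_n) \in \partial E_j\bigr\}.$$

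The first step is tail control. The flatness hypothesis at dyadic scale $2^l$ reads $\partial E_j \cap B_{2^l} \subset \{|x_n| \leq 2^l \hat\rho(2^{l-k_j})\}$. Dividing by $a_j$ and invoking assumption A3 together with Lemma \ref{lem rho doubling}, the ratio $\hat\rho(2^{l-k_j})/\hat\rho(2^{-k_j})$ is dominated by $2^{l\sigma}$ for some $\sigma < s$, yielding the uniform bound $F_j \cap B_{2^l} \subset \{|x_n| \leq C\, 2^{l(1+\sigma)}\}$ for all $0 \leq l \leq k_j$. The second step is equicontinuity: iterating the Partial Harnack estimate (Theorem \ref{thm Harnack estimate}) at the scales where the flatness hypothesis remains available provides a H\"older-type oscillation decay for $F_j$ near the origin, each step tightening the trapping slab by a factor $(1-\delta_0^2)$. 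Combined with the tail growth from step one, a diagonal Arzel\`a--Ascoli argument extracts a subsequence (not relabelled) converging locally uniformly to the graph of a continuous $v : \mathbb{R}^{n-1} \to \mathbb{R}$ with $v(0)=0$ and $|v(x')| \leq C(1+|x'|)^{1+\sigma}$.

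Third, I would identify the limit equation: the goal is to show $v$ is a viscosity solution of $(-\Delta)^{(1+s)/2} v = 0$ on $\mathbb{R}^{n-1}$. Touching $v$ from below by a smooth test function at a point translates, via the uniform convergence, into $\partial E_j$ having an interior tangent ball of radius $\sim 1/a_j$ at a nearby boundary point. Applying Theorem \ref{thm second Euler Lagrange} at this tangent ball and passing to the limit --- using the tail estimate from step one to dominate far-field contributions, Lemma \ref{lem rho doubling} to absorb the $\rho_{k_j}(8\epsilon)\epsilon^{n-s}$ term into lower-order error, and the fact that $R \sim 1/a_j \to \infty$ to kill the $R^{-1}\delta^{(1-s)/2}|A_\epsilon^-|$ term --- yields the desired one-sided fractional inequality in the limit. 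The symmetric subsolution argument gives the reverse direction, so $v$ solves the limiting equation. A Liouville theorem for $(-\Delta)^{(1+s)/2}$, applied to a solution with growth strictly slower than $1+s$ (foreshadowed by Lemma \ref{lem Liouville}), then forces $v$ to be affine; combined with $v(0)=0$ this makes $v$ linear.

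The contradiction follows: uniform convergence of $F_j$ to a linear graph supplies a unit vector $e_{-1}$ (very close to $e_n$) such that for all sufficiently large $j$,
$$\partial E_j \cap B_{1/2} \subset \bigl\{|x \cdot e_{-1}| \leq 2^{-1}\hat\rho(2^{-1-k_j})\bigr\},$$
violating the standing failure assumption. The main obstacle will be step three: one must verify carefully that every error term appearing in Theorem \ref{thm second Euler Lagrange} (the $\rho$-term, the tangent-radius term, and the sum over dyadic scales controlled by assumption A3) vanishes under the vertical dilation by $1/a_j$, so that the pointwise fractional equation is recovered cleanly in the limit. This is where the precise form of A3 and the exponent $m > n+s$ become indispensable, and it is the analogue of the delicate non-local analysis carried out in Section 6 of \cite{CRS10}.
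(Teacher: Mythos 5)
Your proposal follows essentially the same route as the paper's own proof: contradiction, vertical dilation by $1/a_k$, iterated Partial Harnack giving H\"older-type compactness to a graph with controlled growth, the Euler--Lagrange inequalities of Theorem \ref{thm second Euler Lagrange} combined with Lemma \ref{lem rho doubling} to show the limit solves $(-\Delta)^{\frac{1+s}{2}}u=0$ in the viscosity sense, and a Liouville theorem forcing linearity, which contradicts the failure of the flatness improvement. The only cosmetic difference is your growth bound $C(1+|x'|)^{1+\sigma}$ with $\sigma=s/m<s$ in place of the paper's $C(1+|x'|\hat\rho(|x'|))$; both are $o(|x'|^{1+s})$ and suffice for the Liouville step, and your flagged concern about the error terms is exactly what the paper resolves by choosing $\epsilon_k\sim a_k^{3/2}$ admissible for Theorem \ref{thm second Euler Lagrange} and using $m>n+s$.
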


The proof of the theorem will be divided in a couple of lemmas, all of which deal with a sequence of sets $\{E_k\}$. Here for each $k>0$ the set $E_k$ is $(J_s,\rho_k,d_k)$-minimal as above and each being $a_k$-flat with respect to $\hat\rho$, $a_k=\hat\rho(2^{-k})$.

In order to normalize things further we also assume for all $k$ that $e^{(k)}_0$ agrees with the unit vector in the positive direction of the $x_n$-axis (we can always get in this situation via a rotation).\\

To the sequence $\{E_k\}_k$ we can apply directly the partial Harnack estimate from Theorem \ref{thm Harnack estimate}, as the classical oscillation lemma, it gives a H\"older estimate:


\begin{cor}\label{cor Holder}
There is a universal $k_0$ such that if $k>k_0$, then for any $x \in \partial E_k$, the set $\partial E_k \cap B_\delta(x)$ can be trapped in between the graphs 

\begin{equation}\label{eqn Holder}
\{(y',y_n): y_n=x_n \pm Ca_k \max\{a_k^\gamma, |y'-x'|^\gamma\} \},
\end{equation}

where $C>0$ and $\gamma \in (0,1)$ are universal constants.
\end{cor}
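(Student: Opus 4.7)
My plan is to derive the H\"older estimate by the classical oscillation-to-H\"older argument: iterate the Partial Harnack (Theorem \ref{thm Harnack estimate}) at dyadic scales to obtain a geometric decay of the $e_0^{(k)}$-oscillation of $\partial E_k$, then convert the decay into the graph-trapping statement. Translating so that $x$ becomes the origin (nearby points of $\partial E_k$ inherit the flatness hypothesis up to absorbable errors arising from the shift), one application of Theorem \ref{thm Harnack estimate} gives $\partial E_k \cap B_{\delta_0} \subset \{|y_n| \leq a_k(1-\delta_0^2)\}$, so that the oscillation at scale $\delta_0$ has been improved by the factor $1-\delta_0^2$ relative to the trivial bound $2a_k$.

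To iterate, I would rescale by $\delta_0^{-1}$: the dilated set $\tilde E := \delta_0^{-1}E_k$ is $(J_s,\rho_{k'},d_{k'})$-minimal in $B_{2^{k'}}$ with $2^{k'}=2^k/\delta_0$, and the key check is that $\tilde E$ is again $\hat\rho(2^{-k'})$-flat with respect to $\hat\rho$. At scales $\geq 1$ this is automatic because the flatness of $E_k$ at larger scales rescales naturally; the delicate point is at the unit scale of $\tilde E$, where the new half-width $a_k(1-\delta_0^2)/\delta_0$ must be dominated by $\hat\rho(2^{-k'})$. This compatibility is exactly what Lemma \ref{lem rho doubling} is designed to guarantee, by calibrating the Harnack improvement rate against the decay of $\hat\rho$, and it pins down the admissible range of $\delta_0$. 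Iterating $j$ times then produces $\operatorname{osc}_{B_{\delta_0^j}}(\partial E_k) \leq 2 a_k(1-\delta_0^2)^j$, which with $\gamma := \log(1-\delta_0^2)/\log\delta_0 \in (0,1)$ reads $\operatorname{osc}_{B_r}(\partial E_k) \leq C a_k r^\gamma$, valid down to a floor $r \sim a_k$ below which the compatibility condition fails and the iteration halts; for smaller $r$ the oscillation is bounded by its floor value $\sim a_k^{1+\gamma}$. Combining the two ranges yields $\operatorname{osc}_{B_r}(\partial E_k) \leq C a_k \max\{a_k^\gamma, r^\gamma\}$, and the density estimate (Proposition \ref{prop density}) upgrades the slab bound to a genuine graph representation, producing the trapping between the two H\"older graphs of the statement.

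The main obstacle will be the unit-scale compatibility check at each iteration: Harnack shrinks the half-width by $(1-\delta_0^2)$, but the rescaling stretches it by $\delta_0^{-1}$, so absent the calibration provided by Lemma \ref{lem rho doubling}, the induction could stall immediately. Once this compatibility is secured, the remaining ingredients---the propagation of flatness at larger scales under rescaling, the geometric summation, the value of $\gamma$, and the reduction to a graph---are mechanical.
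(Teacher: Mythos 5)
Your skeleton --- iterate the partial Harnack at geometric scales, get oscillation decay with factor $\mu=1-\delta_0^2$, stop at a floor scale, read off $\gamma=\log\mu/\log\delta_0$ --- is exactly the paper's argument. But the step you single out as the crux is false as stated, and Lemma \ref{lem rho doubling} cannot repair it. After one application of Theorem \ref{thm Harnack estimate} and dilation by $\delta_0^{-1}$ you require the new half-width $a_k(1-\delta_0^2)/\delta_0$ to be dominated by $\hat\rho(2^{-k'})$ with $2^{-k'}=\delta_0 2^{-k}$. Since $\hat\rho$ is non-decreasing, $\hat\rho(2^{-k'})\leq\hat\rho(2^{-k})=a_k$, while $a_k(1-\delta_0^2)/\delta_0>a_k$ once $\delta_0$ is small; so the inequality fails for \emph{every} $\rho$, already at the first step. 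The point is that the partial Harnack only improves the oscillation by $\mu$, which is far weaker than the scale factor $\delta_0$: the flatness \emph{ratio} (oscillation over radius) grows like $(\mu/\delta_0)^j$ along the iteration, it never improves --- this is precisely why the estimate is only ``partial''. Moreover Lemma \ref{lem rho doubling} says $\rho(t\epsilon)^{1/m}\leq\hat\rho(t)$: it compares $\rho$ at a smaller scale with $\hat\rho$ at a larger one (and is used to absorb the $\rho(2^{-k}8\epsilon)$ terms in Theorems \ref{thm second Euler Lagrange} and \ref{thm Harnack estimate}); it carries no reverse-doubling information about $\hat\rho$ and has no bearing on your compatibility condition.

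The mechanism that actually allows re-application (implicit in the paper's short proof) is that you must not keep the full rescaled minimality exponent. The dilated set is $(J_s,\rho_{k''},d_{k''})$-minimal in $B_{2^{k''}}$ for every $k_0\leq k''\leq k'$ (minimality with the larger exponent is the stronger statement, since $\rho$ is non-decreasing and minimality localizes), so Theorem \ref{thm Harnack estimate} may be invoked with a smaller $k''$, for which the flatness demanded at unit scale is the larger quantity $\hat\rho(2^{-k''})$, ultimately the fixed universal threshold $a_0=\hat\rho(2^{-k_0})$; the dyadic conditions at scales $2^l$, $1\leq l\leq k''$, are then verified from the oscillation bounds of the earlier steps and, above scale one, from the original flatness hypothesis of $E_k$. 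Hence the iteration runs exactly as long as $a_k(\mu/\delta_0)^j\lesssim a_0$, i.e.\ for $\sim\log(a_0/a_k)$ steps, and it is this exhaustion of the iteration count --- not a $\hat\rho$-compatibility --- that produces the floor and the shape $Ca_k\max\{a_k^\gamma,|y'-x'|^\gamma\}$ in (\ref{eqn Holder}). Your floor $r\sim a_k$ and the form of $\gamma$ are the right outcome, but they must be derived this way, as in the paper. (A minor point: the final appeal to Proposition \ref{prop density} is unnecessary; the corollary asserts only trapping between two graphs, which is the oscillation bound applied at each $x\in\partial E_k$.)
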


\begin{proof}
The proof is exactly the same as the proof of H\"older continuity for solutions of elliptic equations via the Harnack inequality. That is, if $v$ solves an elliptic equation in $B_1$ then applying  Harnack inequality $k$ times we see that

\begin{equation*}
\sup \limits_{B_{1/2^k}} v-\inf \limits_{B_{1/2^k}} v \leq \mu \left ( \sup \limits_{B_{1/2^{k-1}}} v-\inf \limits_{B_{1/2^{k-1}}} v\right ), \; \mu\in(0,1).
\end{equation*}

Since $k$ is arbitrary, one concludes the oscillation of the function decays geometrically and thus $v$ is $C^{\gamma}$ at $0$ with $\gamma= \log_{2} \mu$. The difference is that in our case we can only apply the Harnack estimate only as long as the flatness assumption holds (thus it is a ``partial'' Harnack estimate). More specifically, we can apply partial Harnack for a $k$-th time as long as $a\mu^k\leq a_0$, where $\mu=1-\delta_0^2$ ($\delta_0$ as in Theorem \ref{thm Harnack estimate}), thus the maximum number of times we can do it for the set $E_k$ is $\sim \log(\frac{a_0}{a_k})$ and from here the H\"older estimate follows.

\end{proof}

This corollary is the key in proving the following lemma, in which we show that from any sequence of vertical dilations of $E_k$, one may always pick a subsequence converging to the graph of continuous function, for a function that does not grow too much at infinity.


\begin{lem}\label{lem blow up}
For each $k$ we dilate the set $E_k$ vertically, and define 

\begin{equation*}
E^*_k= \{ (x',x_n):(x', a_k^{-1}x_n) \in E_k \}, 
\end{equation*}

then, considering the new sequence $\{E^*_k\}_k$, we have:

\begin{enumerate}

\item Along a subsequence $\{E^*_k\}_k$ converges uniformly in compact sets of $\mathbb{R}^n$ to the subgraph of a H\"older continuous function $u:\mathbb{R}^{n-1} \to \mathbb{R}$
\item Moreover, $u(0)=0$ and for a universal $C$ we have 

\begin{equation*}
|u(x')|\leq C (1 + |x'|\hat\rho(|x'|) ), \;\;\; \forall\; x' \in \mathbb{R}^{n-1}.
\end{equation*}

\end{enumerate}

\end{lem}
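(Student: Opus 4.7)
The plan is to realize each $\partial E^*_k$ as the graph of a continuous function $u_k$, establish locally uniform equicontinuity and a global pointwise bound for $\{u_k\}$ from the nested flatness hypothesis, and extract a subsequential limit by Arzel\`a--Ascoli.

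\textbf{Local regularity.} Corollary \ref{cor Holder} traps $\partial E_k\cap B_\delta(x)$ between the graphs $y_n=x_n\pm Ca_k\max\{a_k^\gamma,|y'-x'|^\gamma\}$ for every $x\in\partial E_k$; combined with the two-sided density estimate (Proposition \ref{prop density}) this forces $\partial E_k$ inside that $\delta$-ball to be the graph of a single $C^\gamma$ function $f_k$ with $[f_k]_{C^\gamma}\leq Ca_k$. In the vertically rescaled picture, $\partial E^*_k$ is then the graph of $u_k(y'):=a_k^{-1}f_k(y')$, whose $C^\gamma$ seminorm is now independent of $k$. This gives local equicontinuity of $\{u_k\}$ on $\mathbb{R}^{n-1}$.

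\textbf{Global growth.} Given $R=|x'|>0$, pick the smallest $l\leq k$ with $2^l\geq R$. The flatness assumption at scale $2^l$ reads $\partial E_k\cap B_{2^l}\subset\{|y\cdot e^{(k)}_l|\leq 2^l\hat\rho(2^{l-k})\}$, and iterating the partial Harnack (Theorem \ref{thm Harnack estimate}) from the outer scale inwards forces the normals $e^{(k)}_l$ to remain within a universally small angle of $e^{(k)}_0=e_n$. Hence $\partial E_k\cap B_{2^l}$ actually lies in the vertical band $\{|y_n|\leq C\,2^l\hat\rho(2^{l-k})\}$, and dividing by $a_k=\hat\rho(2^{-k})$ yields
\begin{equation*}
|u_k(x')|\leq C\cdot 2^l\,\frac{\hat\rho(2^{l-k})}{\hat\rho(2^{-k})}.
\end{equation*}
The ratio is controlled by the structure of $\hat\rho$: splitting the integral defining $\hat\rho(2^{l-k})$ over $[0,2^{-k}]$ and $[2^{-k},2^{l-k}]$, and combining the monotonicity of $t^{-s}\rho(t)$ with Lemma \ref{lem rho doubling}, one verifies $\hat\rho(2^{l-k})/\hat\rho(2^{-k})\leq C(1+\hat\rho(2^l))$ (in the model case $\rho(t)=Ct^\alpha$, where $\hat\rho(t)=ct^{\alpha/m}$, this is immediate). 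Putting things together, $|u_k(x')|\leq C(1+R\hat\rho(R))$.

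\textbf{Passage to the limit.} With local equicontinuity, a pointwise bound on compacts, and $u_k(0)=0$ coming from $0\in\partial E_k$, Arzel\`a--Ascoli extracts a subsequence of $\{u_k\}$ converging locally uniformly on $\mathbb{R}^{n-1}$ to a H\"older continuous $u$ satisfying $u(0)=0$ and the same growth bound. Local uniform convergence of the $u_k$ is equivalent to uniform (Hausdorff) convergence on compact subsets of $\mathbb{R}^n$ of the subgraphs $E^*_k$ to the subgraph of $u$, which is the statement of the lemma. The main obstacle in this plan is the global growth step: both the compatibility of the normals $e^{(k)}_l$ at different scales (delivered by iterating the partial Harnack) and the doubling-type ratio bound on $\hat\rho$ (delivered by Lemma \ref{lem rho doubling} and assumption A3) are precisely the technical ingredients for which those hypotheses were introduced.
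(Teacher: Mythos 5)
Your overall strategy -- compactness coming from the partial Harnack/H\"older estimate plus a growth bound extracted from the multiscale flatness hypothesis -- is in substance the same as the paper's; the paper merely packages the compactness step differently (local $L^1$ convergence of $\chi_{E^*_k}$ from the $H^{s/2}$ bound, upgraded to uniform convergence by Corollary \ref{cor Holder}, and then the limit set is identified as a subgraph by touching its boundary above and below with H\"older graphs), whereas you insist on honest graph functions $u_k$ and Arzel\`a--Ascoli. That repackaging is where your write-up has a genuine gap: Corollary \ref{cor Holder} does \emph{not} make $\partial E_k\cap B_\delta(x)$ the graph of a single function. It only traps the boundary, around each of its points, between two H\"older graphs, so two boundary points with the same horizontal coordinate may still differ in height by as much as $Ca_k^{1+\gamma}$ (that is, $Ca_k^{\gamma}$ after the vertical dilation), and the density estimate of Proposition \ref{prop density} excludes cusps in measure, not multivaluedness at these sub-flatness scales. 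Hence your $u_k$ need not exist as functions and the stated bound $[f_k]_{C^\gamma}\le Ca_k$ is not available verbatim. The fix is easy -- work with the upper and lower envelopes of $\partial E^*_k$, which differ by $O(a_k^\gamma)\to 0$ and therefore have a common limit, or pass to the limit on the sets themselves as the paper does and only identify the limit as a subgraph -- but as written the step is unjustified.

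Two further points in the growth estimate. First, the control of the tilts $e^{(k)}_l$ does not come from ``iterating the partial Harnack from the outer scale inwards'': Theorem \ref{thm Harnack estimate} improves the oscillation going to \emph{smaller} scales in a fixed direction and says nothing about how $e^{(k)}_l$ compares to $e^{(k)}_0$. What is actually used (and what the paper does through the limiting projections $p_l$) is that two consecutive flatness slabs both contain $\partial E_k\cap B_{2^l}$, which together with the density estimate forces $|e^{(k)}_l-e^{(k)}_{l+1}|$ to be of the order of the relative flatness at scale $2^{l+1}$; telescoping these differences gives the tilt bound. Second, the $\hat\rho$-ratio inequality you claim to ``verify'' is not what your sketch yields: splitting $\hat\rho(2^{l-k})$ at $2^{-k}$ and using A3 together with $\rho(t)^{1/m}\le\hat\rho(t)$ gives $\hat\rho(2^{l-k})/\hat\rho(2^{-k})\le 1+2^{ls/m}$ uniformly in $k$, not $C(1+\hat\rho(2^l))$. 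That weaker bound is in fact sufficient for the way the lemma is used later (since $m>n+s$ the resulting growth is $o(|x'|^{1+s})$, so the Liouville argument in Lemma \ref{lem Liouville} still applies), and the paper itself is terse at exactly this point, but your proposal presents as verified a statement that the indicated computation does not deliver.
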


\begin{proof}

Since the functions $\chi_{E_k}$ are bounded in $H^{s/2}$ there is a subsequence that converges locally in $L^1$ to some set. By the partial H\"older estimate, and the fact that $a_k \to 0$, this convergence is actually uniform in compact sets. Since $0 \in \partial E^*_k$ for all $k$ the same is true for the limit set. Again by the partial H\"older estimate, we see that the limit set can be touched above and below at $0$ by the graphs $\{ x_n=\pm C |x'|^\gamma \}$, its not hard to see after a translation that we may do the same at every point of the boundary of the limit set. Therefore this set is the subgraph of a H\"older continuous function $u:\mathbb{R}^{n-1} \to \mathbb{R}$.

Now we prove the second statement, $u(0)=0$ is clear since $0 \in \partial E$. Moreover, by a diagonalization argument we may assume that for each fixed $l$ that the sequence of vectors $\{e^{(k)}_l\}$ converges to a unit vector $e_l$. If we denote by $p_l \in \mathbb{R}^{n-1}$ the horizontal projection of $e_l$, we can see that 
\begin{equation*}
|u(x')-p_l\cdot x'| \leq C2^l \hat\rho(2^{l}),\;\; \mbox{ if } x'\in B_{2^l},
\end{equation*}
\begin{equation*}
\mbox{ and } |p_l-p_{l+1}|\leq C2^l\hat\rho(2^l),  \;\; \forall\;\; l \geq 0.
\end{equation*}
Therefore, $|u(x')|\leq C(1+|x'|\hat\rho(|x'|))$, $\forall x' \in \mathbb{R}^{n-1}$, which finishes the proof.
\end{proof}

The last lemma we need says basically that when we ``linearize'' the non-local minimal surface operator, we obtain the fractional Laplacian, and thus, the limits of non-homogeneous blow ups must be harmonic.

\begin{lem}\label{lem Liouville}
Let $u$ be the same function from the previous lemma, then 
\begin{equation}\label{eqn harmonic}
\left(-\Delta \right )^{\frac{1+s}{2}}u=0 \mbox{ in } \mathbb{R}^{n-1}.
\end{equation}
\end{lem}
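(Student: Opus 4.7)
The plan is to show that $u$ is a viscosity solution of $(-\Delta)^{(1+s)/2}u=0$ on $\mathbb{R}^{n-1}$; combined with the sublinear growth bound from Lemma~\ref{lem blow up} and standard fractional elliptic regularity, this gives the classical equation. The underlying heuristic is that the non-local mean curvature of $\partial E_k$, linearized around the almost flat graph $\{x_n=a_k u(x')\}$, is precisely $a_k\,c_{n,s}(-\Delta)^{(1+s)/2}u$ to leading order; this is exactly what makes the operator of order $1+s$ appear.

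I argue by contradiction. Suppose there is a smooth test function $\varphi$ (with growth comparable to $u$ at infinity, or glued to $u$ outside a bounded set) touching $u$ from below at some $x_0'$ with $(-\Delta)^{(1+s)/2}\varphi(x_0')\geq 3\eta>0$. By the locally uniform convergence in Lemma~\ref{lem blow up}, for large $k$ the graph of $\varphi-c_k$, with $c_k\to 0^+$, touches $\partial E_k^*$ from below at a point $p_k^*\to(x_0',u(x_0'))$. Undoing the vertical dilation by $a_k$, the smooth graph $x_n=a_k(\varphi(x')-c_k)$ touches $\partial E_k$ from below at the original-scale point $\tilde p_k=(p_k',a_k p_{k,n}^*)$; since its second fundamental form has size $O(a_k)$, an interior tangent ball to $E_k$ of radius $R_k\sim a_k^{-1}\geq 1$ fits in at $\tilde p_k$, placing us in the hypotheses of Theorem~\ref{thm second Euler Lagrange}.

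Applying that theorem at $\tilde p_k$ and passing from the $L(A_\epsilon,\cdot)$ quantities to the non-local mean curvature exactly as in the proof of Corollary~\ref{cor pointwise Euler Lagrange}, one obtains
\begin{equation*}
\int_{\mathbb{R}^n\setminus B_\delta(\tilde p_k)}\frac{\chi_{E_k}(x)-\chi_{E_k^c}(x)}{|x-\tilde p_k|^{n+s}}\,dx \;\leq\; C_0\Bigl(\tfrac{\rho_k(8\epsilon)\epsilon^{n-s}}{|A_\epsilon^-|}+a_k\delta^{(1-s)/2}\Bigr)+C_n\epsilon\delta^{-1-s}.
\end{equation*}
Using $|A_\epsilon^-|\geq c_0\epsilon^{2n}$ (Propositions~\ref{prop density} and \ref{prop Properties}) together with Lemma~\ref{lem rho doubling} (which yields $\rho_k(8\epsilon)^{1/m}=\rho(2^{-k}\cdot 8\epsilon)^{1/m}\leq \hat\rho(2^{-k})=a_k$), the first bracketed term is $o(a_k)$, and choosing $\epsilon=\epsilon(k)\to 0$ fast enough makes the $\epsilon\delta^{-1-s}$ term negligible as well. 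Splitting the left side into the complement of a fixed ball $B_{r_0}(\tilde p_k)$ and its interior, the far tail is $O(a_k)$ by the at-all-scales flatness of $E_k$, exactly as in (\ref{eqn harnack estimate four}). The remaining near part, after the vertical rescaling $x_n\mapsto a_k^{-1}x_n$, matches a finite-difference version of $a_k\,c_{n,s}(-\Delta)^{(1+s)/2}(u-\varphi)(x_0')$ up to errors of order $o(a_k)$; since $u\geq \varphi$ with equality at $x_0'$, this is $\geq a_k\,c_{n,s}(-\Delta)^{(1+s)/2}\varphi(x_0')+o(a_k)\geq 3\eta\,c_{n,s}\,a_k+o(a_k)$. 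Dividing through by $a_k$, sending $k\to\infty$, and then $\delta\to 0$ contradicts the upper bound. The reverse inequality comes symmetrically from the subsolution version of Theorem~\ref{thm second Euler Lagrange}.

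The main obstacle is the ``near part'' linearization: one must show that the non-local mean curvature of $\partial E_k$ at $\tilde p_k$, measured relative to the almost flat graph of $a_k u$, is indeed a discretization of $a_k c_{n,s}(-\Delta)^{(1+s)/2}u(x_0')$ up to lower order terms. This is a careful kernel expansion in the spirit of Example 2: change variables so that $\partial E_k$ is flattened to a hyperplane through $\tilde p_k$, and expand $|T(v)-T(w)|^{-n-s}$ in $a_k$. A secondary issue is making sure the almost-minimality defect $\rho_k$ is truly $o(a_k)$ after dividing, which is exactly the purpose of Lemma~\ref{lem rho doubling} combined with the choice $a_k=\hat\rho(2^{-k})$.
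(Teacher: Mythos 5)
Your skeleton is the paper's own: viscosity testing, transferring the touching point to $\partial E_k$ with an interior tangent ball of radius $\sim a_k^{-1}$, Theorem \ref{thm second Euler Lagrange} converted into a non-local mean curvature bound as in Corollary \ref{cor pointwise Euler Lagrange}, Lemma \ref{lem rho doubling} to absorb the $\rho$-term, far-field control by the at-all-scales flatness, and a near-field linearization. But two steps do not close as written. First, the sign in your contradiction setup is reversed. Touching $u$ from below produces an \emph{interior} tangent ball, hence the supersolution Euler--Lagrange bound, which is an \emph{upper} bound on $\int(\chi_{E_k}-\chi_{E_k^c})|x-\tilde p_k|^{-n-s}dx$ of size $o(a_k)+Ca_k\delta^{\frac{1-s}{2}}$. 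To contradict it you need the linearized near part to be bounded \emph{below} by $+c\eta a_k$; since $u\geq\varphi$ with equality at $x_0'$ gives $\int\frac{u(y')-u(x_0')}{|y'-x_0'|^{n+s}}dy'\geq\int\frac{\varphi(y')-\varphi(x_0')}{|y'-x_0'|^{n+s}}dy'$, this forces the hypothesis $(-\Delta)^{\frac{1+s}{2}}\varphi(x_0')\leq-3\eta<0$, not $\geq 3\eta>0$; with your sign the lower bound is negative and there is no contradiction. Your intermediate chain is also internally inconsistent: since $u-\varphi\geq0$ vanishes at $x_0'$, one has $(-\Delta)^{\frac{1+s}{2}}(u-\varphi)(x_0')\leq 0$, so it cannot dominate a quantity assumed to be $\geq 3\eta>0$. (The paper sidesteps quantified contradictions altogether and derives the one-sided inequality $\int_{B'_M\setminus B'_\delta}\frac{u(x')-u(0)}{|x'|^{n+s}}dx'\leq C\delta^{1-s}+O(M^{-\eta_2})$ directly.)

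Second, the step you yourself call ``the main obstacle'' -- identifying the near part with $2a_k\int\frac{u(x')-u(\hat x')}{|x'-\hat x'|^{n+s}}dx'$ up to $o(a_k)$ -- is left unproved, and the method you propose (flatten $\partial E_k$ by a change of variables as in Example 2 and expand $|T(v)-T(w)|^{-n-s}$) is not available: at this stage $\partial E_k$ is not known to be a graph, let alone regular enough to build a flattening diffeomorphism; that is precisely what the regularity theorem is trying to prove. The paper's claim (\ref{eqn harmonic claim}) is obtained far more cheaply: since $\partial E_k\cap B_M$ lies within $a_k\epsilon_1$ of the graph of $a_ku$ and $|x-\hat x|^{-n-s}$ is Lipschitz in $x_n$ with constant $C(\delta)$ outside the cylinder $D_\delta(\hat x)$, one integrates vertically over $D_M\setminus D_\delta$ to get $2a_k\int_{B'_M\setminus B'_\delta}\frac{u(x')-u(\hat x')+O(\epsilon_1)}{|x'-\hat x'|^{n+s}}dx'+O(a_k^3)$, handles the annulus $D_\delta\setminus B_\delta$ with the touching graph of $a_k\phi$ (error $a_kC(\phi)\delta^{1-s}$), and handles $D_M^c$ with the growth $|u(x')|\leq C(1+|x'|\hat\rho(|x'|))$ from Lemma \ref{lem blow up}. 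Note finally that your error bookkeeping pulls in opposite directions: sending $\epsilon(k)\to0$ ``fast enough'' to kill $C_n\epsilon\delta^{-1-s}$ after dividing by $a_k$ conflicts with keeping $\rho_k(8\epsilon)\epsilon^{n-s}/|A^-_\epsilon|=o(a_k)$ when all you use is $|A^-_\epsilon|\gtrsim\epsilon^{2n}$; the paper fixes the specific scale $\epsilon_k\in[a_k^{3/2},2a_k^{3/2}]$ (admissible for Theorem \ref{thm second Euler Lagrange}) and then invokes Lemma \ref{lem rho doubling}, and you should likewise commit to an explicit balance rather than leave the choice of $\epsilon(k)$ unspecified.
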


\textbf{Remark}: Given the growth condition of $u$ at infinity,  it follows by estimates for elliptic equations that $u$ is linear (see for instance Landkof's treatise \cite{L72}).

\begin{proof}

We shall prove that (\ref{eqn harmonic}) holds in the viscosity sense (cf. \cite{CS09}). Let $\phi$ be a smooth function, and to fix ideas, suppose it is touching $u$ from below at the origin. Fix positive numbers $\epsilon_1$, $M$ and $K$; Lemma \ref{lem blow up} says that one can find some $k>K$ such that $\partial E_k \cap B_M$  lies in a $a_k\epsilon_1$-neighborhood of the graph of $a_ku(x)$,  $a_k=\hat\rho(2^{-k})$. Moreover, it is easy to see that $\partial E_k$ must be touched from below by a vertical translation of the graph of $a_k\phi(x)$ at some point $x_k \in B_{\epsilon_1}(0)$, \BlueComment{ in particular $\partial E_k$ is being touched at that same point by a ball of radius $R_k\geq C_\phi a_k^{-1}$}.

\BlueComment{For each $k$ let us pick $\epsilon_k$ so that  $a_k^{3/2} \leq \epsilon_k \leq 2a_k^{3/2}$ and such that $\epsilon_k$ is admissible for Theorem \ref{thm second Euler Lagrange}}. We will approximate
\begin{equation*}
L(A_{\epsilon_k},E_k\setminus B_{\delta_k}(\hat{x}))-L(A_{\epsilon_k},E_k^c\setminus B_{\delta_k}(\hat{x}))
\end{equation*}

\noindent with the fractional Laplacian of $u$. In fact, we make the following claim.\\

\noindent \emph{Claim}:
\begin{equation*}
\int_{B'_M(\hat x') \setminus B'_\delta(\hat x')}\frac{u(x')-u(\hat{x}')}{|x'-\hat{x}'|^{n+s}}dx'\leq \frac{1}{2a_k}\int_{B_\delta^c}\frac{\chi_{E_k}(x)-\chi_{E_k^c}(x)}{|x-\hat{x}|^{n+s}}dx+O(\epsilon_1)
\end{equation*}
\begin{equation}\label{eqn harmonic claim}
+O(a_k^{\eta_1})+O(M^{-\eta_2})+C(\phi)\delta^{1-s}, \;\; \mbox{ for universal }\eta_1,\eta_2>0.
\end{equation}

Recalling the proof of Corollary \ref{cor pointwise Euler Lagrange}, we have that 
\begin{equation*}
\int_{B_\delta^c(\hat x)}\frac{\chi_{E_k}(x)-\chi_{E_k^c}(x)}{|x-\hat{x}|^{n+s}}dx \leq C_n\epsilon\delta^{-(1+s)} +\frac{L(A_\epsilon,E_k\setminus B_\delta)-L(A_\epsilon,E_k^c\setminus B_\delta)}{|A_\epsilon|}.
\end{equation*}

\BlueComment{Theorem \ref{thm second Euler Lagrange} guarantees that for all large enough $k$ and fixed small $\delta$,
\begin{equation*}
\int_{B_{\delta_k}^c(\hat x_k)}\frac{\chi_{E_k}(x)-\chi_{E_k^c}(x)}{|x-x_k|^{n+s}}dx \leq  a_k C_0\left ( a_k^{-1}\epsilon_k\delta^{-(1+s)} +C_\phi\delta^{1-s} \right.
\end{equation*}
\begin{equation*}
\left. +(a_k^{-1}\rho(2^{-k}8\epsilon)\epsilon^{-s} \right )=:a_kC_0B_k.
\end{equation*}

In light of this and (\ref{eqn harmonic claim}) to prove the lemma it suffices to get a bound on $B_k$. Observe that from the way we picked $\epsilon_k$  the first term of $B_k$ obviously goes to zero as $k\to \infty$ as long as $\delta$ remains fixed, since
\begin{equation*}
a_k^{-1}\epsilon_k\leq 2a_k^{-1}a_k^{3/2}=2a_k^{1/2} \to 0,
\end{equation*}
the second term in $B_k$ remains bounded in $k$ so we only need to focus on the third one. Once again we apply Lemma \ref{lem rho doubling}, which says that if $k$ is large then
\begin{equation*}
a_k^{-1}\rho(2^{-k}8\epsilon)\epsilon_k^{-s}=\rho(2^{-k}8\epsilon)a_k^{-3s/2}\leq \hat\rho(2^{-k})^m\hat\rho(2^{-k})^{-3s/2},
\end{equation*}
by property A3 (cf. Section 2 ) we always have $m\geq 2$ and we conclude
\begin{equation*}
\lim \limits_{k\to\infty}a_k^{-1}\rho(2^{-k}8\epsilon)\epsilon_k^{-s}=0.
\end{equation*}

In other words, for each fixed $\delta>0$ we have $\lim \limits_{k\to\infty}B_k \leq C_0C_\phi\delta^{1-s}$}. Putting this together with (\ref{eqn harmonic claim}) \BlueComment{we get for $\delta>0$ fixed,}
\begin{equation*}
\int_{B'_M(x_k') \setminus B'_\delta (x_k') }\frac{u(x')-u(x_k')}{|x'-x_k'|^{n+s}}dx' \leq C_0C_\phi \BlueComment{(\delta^{1-s}+o(1))}+O(\epsilon_1)+O(M^{-\eta_2}).
\end{equation*}
Taking $\epsilon_1 \to 0$\BlueComment{, $k\to+\infty$}, we see $x'_k \to 0$, and the inequality above gives us 
\begin{equation*}
\int_{B'_M(0) \setminus B'_\delta (0) }\frac{u(x')-u(0)}{|x'|^{n+s}}dx' \leq C \delta^{1-s}+O(M^{-\eta_2}).
\end{equation*}
Then letting $M \to +\infty$ and $\delta \to 0$ we obtain the desired inequality. The only thing left to prove is the claim (\ref{eqn harmonic claim}) \BlueComment{(which was already proved in \cite{CRS10})}.\\

\emph{Proof of the claim.} Consider for any $\delta>0$ the cylinder

\begin{equation*}
D_m(\hat x ) = \{ |x'-\hat{x}'| \leq m ; \;\; |x_n-(\hat{x})_n|<m \} \subset \mathbb{R}^n.
\end{equation*}

Using the fact that in $D_\delta^c(\hat x)$ the function $|x-\hat{x}|^{-n-s}$ is Lipschitz with constant $C(\delta)$, 

\begin{equation*}
\left | \frac{1}{|x-\hat{x}|^{n+s}}-\frac{1}{|x'-\hat{x}'|^{n+s}}\right | \leq C(\delta)a^2, \;\; \mbox{ whenever } |x_n-(\hat x)_n| \leq  2a.
\end{equation*}

Recalling also that $\partial E_k$ lies in a $a_k\epsilon_1$-neighborhood of the graph of $a_ku(x)$, we get

\begin{equation*}
\int_{D_M(\hat x) \setminus D_\delta (\hat x)}\frac{\chi_{E_k}(x)-\chi_{E_k^c}(x)}{|x-\hat{x}|^{n+s}}dx=2\int_{B'_M (\hat x) \setminus B'_\delta (\hat x)}\frac{a_k \left ( u(x')-u(\hat{x}') + O(\epsilon_1) \right )}{|x'-\hat{x}'|^{n+s}}dx'+O(a_k^3).
\end{equation*}

This is almost estimate (\ref{eqn harmonic claim}), except we still need to take into account the integration over $B_M^c$ and $D_\delta \cap B_\delta^c$, . We use the fact that $\partial E_k$ is being touched from below by the graph of $a_k\phi$, thus we get the lower bound

\begin{equation*}
\int_{D_\delta(\hat x) \setminus B_\delta (\hat x)}\frac{\chi_{E_k}(x)-\chi_{E_k^c}(x)}{|x-\hat{x}|^{n+s}}dx \geq -a_kC(\phi)\delta^{1-s}.
\end{equation*}

Next, note we can neglect the contributions outside $D_M(\hat x)$

\begin{equation*}
\left | \int_{D_M^c(\hat x)}\frac{\chi_{E_k}(x)-\chi_{E_k^c}(x)}{|x-\hat{x}|^{n+s}}dx \right | \leq \int_{M}^\infty a_kr^{n-1}\hat\rho(r)r^{-n-s}dr+Ca_k^{1+\eta_1}
\end{equation*}

\begin{equation*}
\leq Ca_kM^{-\eta_2}+Ca_k^{1+\eta_1}
\end{equation*}

and with this we are done. \end{proof}
\begin{proof}[Proof of Theorem \ref{thm improv prime}]

If \BlueComment{ the } theorem were false, there would be a sequence of sets $\{ E_k \}_{k \geq 0}$ satisfying the hypothesis of the theorem but such that for each $k$, the set $\partial E_k \cap B_{1/2}$ does not lie inside any cylinder of flatness $2^{-1}\hat\rho(2^{-k-1})$. To this sequence we readily apply Lemma \ref{lem blow up} and Lemma \ref{lem Liouville}, so passing to a subsequence we see that the dilated sets $E^*_k$ converge to a hyperplane. Since the convergence is uniform, for any $\epsilon>0$ we can find a $k_1$ for which $\partial E^*_{k_1} \cap B_{1}$ lies inside an $\epsilon$ neighborhood of the limiting plane. 

Taking $\epsilon$ small enough and going back to the set $E_{k_1}$, we see that this set contradicts the assumption that  none of the sets $\partial E_k \cap B_{1/2}$ could be trapped inside  cylinder of flatness $2^{-1}\hat\rho(2^{-k-1})$, and this proves the theorem. \end{proof}


\section{Monotonicity formula and regular points}

In the classical theory of minimal surfaces, an important role is played by the monotonicity formula, which says that whenever $\Sigma$ is a minimal surface:
\begin{equation*}
\frac{d}{dr} \left ( \frac{\mathcal{H}_{n-1}(\Sigma \cap B_r)}{r^{n-1}} \right ) \geq 0,\;\;\; \mbox{ if } 0 \in \Sigma.
\end{equation*}

In this section we consider an \BlueComment{analogous } formula for the non-local case, introduced in \cite{CRS10}. We discuss briefly how it is used to characterize the regular points of the boundary of an almost minimal set $E$, several proofs are omitted.

\begin{DEF}
For any function $u \in L^1(\mathbb{R}^n,\mu)$ where $\mu=\left ( 1 + |x|^2\right )^{-\frac{n+s}{2}}dx$, we define the \textbf{extension} of $u$,  $\hat{u}: \mathbb{R}^{n+1}_+ \to \mathbb{R}$, as the solution to 
\begin{equation*}
\left \{ \begin{array}{rl}
\mbox{div}(z^a\nabla \hat u)=0 & \mbox{ in } \mathbb{R}^{n+1}_+\\
\hat u = u & \mbox{ on } \partial\; \mathbb{R}^{n+1}_+=\mathbb{R}^n
\end{array}	\right.
\end{equation*}

Here $\mathbb{R}^{n+1}_+$ denotes the half space $\{X=(x,z):x\in\mathbb{R}^n,z>0\}$, and $a=1-s$.
\end{DEF}

For more about this extension and its connection to integro-differential equations, see \cite{CS07}.  In \cite{CRS10} it is shown how the non-local energy $J_s(E;\Omega)$ relates to the $H^1(\mathbb{R}^{n+1}_+,z^adxdx)$ energy of $\hat{u}$ when $u=\chi_E-\chi_{E^c}$.

\begin{prop}\label{prop extension energy}{\emph{(See section 7.2 in \cite{CRS10})}}\
Assume $E$ and $F$ are two sets such that $J_s(E;\Omega),J_s(F;\Omega)<\infty$, and suppose that $E \Delta F \subset \subset \Omega$. Then
\begin{equation*}
\BlueComment{\inf \limits_{F}\int_{\Omega^+}z^a\left ( |\nabla \bar{v} |^2-|\nabla \tilde{u}|^2 \right )dxdz = c_{n,s}\left ( J_s(F;\Omega)-J_s(E;\Omega) \right ).}
\end{equation*}
\end{prop}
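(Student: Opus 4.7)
The plan is to view both sides through the Caffarelli--Silvestre extension characterization: for any trace $w$ on $\mathbb{R}^n$, the harmonic extension (with respect to $\mathrm{div}(z^a \nabla \cdot)$) is the minimizer of the weighted Dirichlet energy $\int_{\mathbb{R}^{n+1}_+} z^a |\nabla \cdot|^2$ among all extensions with that trace, and the minimum value is $c_{n,s}\|w\|^2_{\dot H^{s/2}(\mathbb{R}^n)}$. First I would show that the infimum in the proposition, taken over extensions $\bar v$ having trace $\chi_F - \chi_{F^c}$ on $\Omega$, is attained precisely by the harmonic extension of $\chi_F - \chi_{F^c}$; this is a standard orthogonality computation, where one writes a competitor as the harmonic extension plus a function vanishing on the prescribed boundary and expands the square.

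Second, I would argue that on $\mathbb{R}^{n+1}_+ \setminus \Omega^+$ the two extensions $\tilde u$ and $\bar v$ can be arranged to coincide (since $\chi_E = \chi_F$ off $\Omega$, and one can glue the harmonic extension of $\chi_F-\chi_{F^c}$ in $\Omega^+$ to $\tilde u$ outside), so the energies cancel there and the localized integral over $\Omega^+$ equals the corresponding global integral over $\mathbb{R}^{n+1}_+$. Consequently the displayed infimum reduces to
\begin{equation*}
c_{n,s}\bigl(\|\chi_F-\chi_{F^c}\|^2_{\dot H^{s/2}}-\|\chi_E-\chi_{E^c}\|^2_{\dot H^{s/2}}\bigr),
\end{equation*}
which is finite despite each seminorm possibly being infinite, since the integrands differ only on a set where $E\Delta F \subset\subset \Omega$ localizes the discrepancy.

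Third, I would establish the algebraic identity
\begin{equation*}
\|\chi_F-\chi_{F^c}\|^2_{\dot H^{s/2}}-\|\chi_E-\chi_{E^c}\|^2_{\dot H^{s/2}}=c_{n,s}'\bigl(J_s(F;\Omega)-J_s(E;\Omega)\bigr)
\end{equation*}
by splitting the double integral over $\mathbb{R}^n\times\mathbb{R}^n$ into the four pieces $(\Omega,\Omega)$, $(\Omega,\Omega^c)$, $(\Omega^c,\Omega)$, $(\Omega^c,\Omega^c)$. The $(\Omega^c,\Omega^c)$ piece contributes zero by $E=F$ there, the $(\Omega,\Omega)$ piece reduces via the identity $(\chi_A-\chi_{A^c})(x)-(\chi_A-\chi_{A^c})(y)=\pm 2$ on $A\times A^c$ to $L(E\cap\Omega,E^c\cap\Omega)$ minus its $F$-analogue, and the mixed pieces contribute the remaining $L_s$ terms. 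Summing and comparing with the definition $J_s(E;\Omega)=L(E\cap\Omega,E^c)+L(E\setminus\Omega,E^c\cap\Omega)$ gives the desired proportionality with an explicit constant (the factor $4$ getting absorbed into $c_{n,s}$).

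The main obstacle I expect is the bookkeeping in the last step: individual seminorms $\|\chi_E-\chi_{E^c}\|_{\dot H^{s/2}}$ are generally infinite, so the manipulations in step three must be phrased directly in terms of differences that are known to be finite under the hypothesis $E\Delta F\subset\subset\Omega$. A clean way is to truncate to a ball $B_R$ containing $\Omega$ with $R\to\infty$, observe that the discrepancy integrand is integrable at infinity because $u-v$ is supported in $\Omega$, and pass to the limit. Once this arithmetic identification with $J_s(F;\Omega)-J_s(E;\Omega)$ is in place, combining it with the extension minimization closes the proof.
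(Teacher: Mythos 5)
The paper never writes out a proof of this proposition: it explicitly omits it and defers to Section 7.2 of \cite{CRS10}, so your proposal has to be measured against that argument, and there it has a genuine gap at its central step. In your second step you claim that, since $\chi_E=\chi_F$ outside $\Omega$, the two extensions ``can be arranged to coincide'' on $\mathbb{R}^{n+1}_+\setminus\Omega^+$ by gluing the $a$-harmonic extension of $\chi_F-\chi_{F^c}$ inside $\Omega^+$ to $\tilde u$ outside. The extension is a nonlocal operation: equality of the traces outside $\Omega$ does \emph{not} make the global extensions of $\chi_E-\chi_{E^c}$ and $\chi_F-\chi_{F^c}$ agree anywhere in the upper half-space, so the glued function has a jump across $\partial\Omega^+\cap\{z>0\}$, is not in $H^1_{loc}$, and is not an admissible competitor; in particular the constrained infimum in the proposition (competitors must agree with $\tilde u$ near the lateral and upper boundary of $\Omega^+$) is \emph{not} attained by the global harmonic extension of $\chi_F-\chi_{F^c}$, contrary to your first step. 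For a fixed bounded $\Omega^+$ the constrained problem is a mixed boundary-value problem and only yields one inequality directly; the identification with $c_{n,s}\left(J_s(F;\Omega)-J_s(E;\Omega)\right)$ is obtained in \cite{CRS10} by comparing with the global extension through a cutoff/interpolation in large half-balls $B_R^+$, using that $\tilde u-\tilde v$ is the extension of the bounded, compactly supported function $2(\chi_E-\chi_F)$, hence has finite weighted Dirichlet energy and decays, so the mismatch paid in the interpolation layer vanishes as $R\to\infty$. That limiting argument is exactly the content of the proposition (which is why the infimum appears at all), and it is the step your outline replaces with an incorrect gluing.

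Your third step, the trace-side arithmetic, is essentially sound: formally $\tfrac14\|\chi_F-\chi_{F^c}\|^2_{\dot H^{s/2}}-\tfrac14\|\chi_E-\chi_{E^c}\|^2_{\dot H^{s/2}}=L(F,F^c)-L(E,E^c)$, and since $E$ and $F$ agree outside $\Omega$ the common (possibly infinite) term $L(E\setminus\Omega,E^c\setminus\Omega)$ cancels, leaving $J_s(F;\Omega)-J_s(E;\Omega)$; your remark that this must be phrased through differences (or a truncation in $B_R$) because the individual seminorms are infinite is the right instinct. But the same localization issue reappears, and is harder, on the extension side, where both global Dirichlet energies are infinite as well; your proposal only addresses it on the trace side. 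Repairing the argument means replacing the gluing by the lower bound for arbitrary admissible competitors (extend them by $\tilde u$ and compare with the minimal extension of $\chi_F-\chi_{F^c}$ via localization) together with the cutoff construction of near-optimal competitors described above.
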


We will omit the proof of this proposition, since there is nothing to be added to the proof in \cite{CRS10} for it to work in our situation. With this fact in hand we can prove the monotonicity formula for almost minimal sets.

\begin{lem}{(Monotonicity formula)}\label{lem monotonicity formula}
Suppose $E$ is $(J_s,\rho,d)$-minimal in $\Omega$, and that $0 \in \partial E$. Let $u_E= \chi_E-\chi_{E^c}$, then the function
\begin{equation}\label{eqn monotonicity}
\BlueComment{\Phi_E(r)=\frac{\int_{B_r^+}z^a|\nabla \hat{u}_E|^2dX}{r^{n-s}}+(n-s)\int_0^r\rho(t)t^{n-s-1}dt}
\end{equation}
is monotone increasing for all small enough $r$. Moreover, this function is constant if and only if $E$ is a cone with vertex at $0$ and $\rho \equiv 0$.
\end{lem}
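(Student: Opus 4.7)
Set $E(r) := \int_{B_r^+} z^{a}|\nabla \hat u_E|^2\,dX$, so that the first term of $\Phi_E(r)$ is $r^{s-n}E(r)$. Since the second term of $\Phi_E$ is manifestly non-decreasing, it suffices to establish the differential inequality
$$
rE'(r) \;\geq\; (n-s)\bigl(E(r) - c_{n,s}\,\rho(r)r^{n-s}\bigr)
$$
for a.e.\ small $r>0$ (with any universal constant $c_{n,s}$ absorbed into the normalization of $\rho$ in the statement). The plan is to get this by comparing $\hat u_E$ with a ``conical'' competitor in $B_r^+$ and using Proposition \ref{prop extension energy}.

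The competitor is the $0$-homogeneous extension: let $\bar v$ be the function on $B_r^+$ defined by $\bar v(\varrho\sigma) := \hat u_E(r\sigma)$ for $\sigma \in \partial B_1^+$ and $\varrho\in(0,r]$, extended by $\hat u_E$ outside $B_r^+$. Because $\hat u_E|_{\{z=0\}} = \chi_E - \chi_{E^c}$ only takes values $\pm 1$, the same is true of $\bar v|_{\{z=0\}}$, and hence $\bar v = \chi_{F_r} - \chi_{F_r^c}$ for the cone $F_r$ with vertex at $0$ over $E\cap\partial B_r$; moreover $F_r\Delta E \subset\subset B_r$. The almost-minimality of $E$ gives $J_s(F_r;\Omega) \geq J_s(E;\Omega) - \rho(r)r^{n-s}$, and feeding the admissible extension $\bar v$ into Proposition \ref{prop extension energy} produces
$$
\int_{B_r^+} z^{a}|\nabla \bar v|^2\,dX \;\geq\; E(r) - c_{n,s}\,\rho(r)r^{n-s}.
$$

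Next, a direct computation in polar coordinates $(\varrho,\sigma)\in(0,r]\times\partial B_1^+$, using $\partial_\varrho\bar v\equiv 0$ and $z^{a}=\varrho^{a}\sigma_{n+1}^{a}$ together with $n+a-1 = n-s$, gives the Rellich-type identity
$$
\int_{B_r^+} z^{a}|\nabla \bar v|^2\,dX \;=\; \frac{r}{n-s}\int_{\partial B_r^+} z^{a}|\nabla_\tau \hat u_E|^2\,dS,
$$
where $\nabla_\tau$ denotes the spherical tangential gradient. Since $|\nabla \hat u_E|^2\geq |\nabla_\tau \hat u_E|^2$ pointwise on $\partial B_r^+$ and $E'(r)=\int_{\partial B_r^+} z^{a}|\nabla \hat u_E|^2\,dS$ by the coarea formula, combining the two displays yields the required differential inequality, whence $\Phi_E'(r)\geq 0$.

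For the rigidity: if $\Phi_E$ is constant on an interval, every inequality above must be an equality, which forces $\rho\equiv 0$ and $|\partial_\varrho \hat u_E|^2 = 0$ a.e.\ in $B_r^+$. Thus $\hat u_E$ is $0$-homogeneous in $B_r^+$, and since its trace $\chi_E-\chi_{E^c}$ only takes values $\pm 1$, the set $E$ coincides with its radial cone from the origin at every admissible scale, i.e.\ $E$ is a cone with vertex $0$. The main technical point will be justifying the Rellich computation and the use of Proposition \ref{prop extension energy} for the non-smooth competitor $\bar v$ (which is only $H^1(B_r^+,z^{a}dX)$); following \cite{CRS10}, this is handled by approximating the trace $\hat u_E|_{\partial B_r^+}$ by smooth boundary data on the hemisphere, running the argument on the approximations, and passing to the limit.
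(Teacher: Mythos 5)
Your proposal is correct and takes essentially the same route as the paper: the zero-homogeneous (conical) competitor fed into Proposition \ref{prop extension energy} together with almost minimality, the polar-coordinate computation giving $\frac{r}{n-s}\int_{\partial B_r^+}z^a|\nabla_\tau \hat u_E|^2\,dS$ for the competitor's energy, and the comparison with $E'(r)=\int_{\partial B_r^+}z^a|\nabla \hat u_E|^2\,dS$ via $|\nabla_\tau \hat u_E|\leq|\nabla \hat u_E|$, so that your differential inequality is exactly the paper's key inequality (\ref{eqn monotonicity theorem}). The rigidity step (equality forces the radial derivative to vanish and $\rho\equiv 0$, hence $E$ is a cone with vertex at $0$) also coincides with the paper's argument.
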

\begin{proof}
We can compute $\Phi'_E(r)$ in the standard way to get
\BlueComment{\begin{equation*}
\Phi'_E(r)=\frac{1}{r^{n-s}}\left (\int_{\partial B_r^+}z^a|\nabla \hat{u}_E|^2dX-\frac{(n-s)}{r}\int_{B_r^+}z^a|\nabla \hat{u}_E|^2dX \right )+(n-s)\rho(r)r^{n-s-1}.
\end{equation*}}

The theorem will follow at once from
\begin{equation}\label{eqn monotonicity theorem}
\BlueComment{\frac{r}{n-s}\int_{\partial B_r^+}z^a|\nabla \hat{u}_E|^2dX \geq \int_{B_r^+}z^a|\nabla \hat{u}_E|^2dX-\rho(r)r^{n-s}.}
\end{equation}

To prove (\ref{eqn monotonicity theorem}), consider the function
\BlueComment{\begin{equation*}
\hat{v}(X)= \left \{ 
\begin{array}{ll} 
\hat{u}(r\hat X) & \mbox{ if } |X|\leq r\\
\hat{u}(X) & \mbox{ if } |X| > r
\end{array}\right.,
\end{equation*}}
\BlueComment{where we are putting $\hat X = X/|X|$}. Note that the trace of \BlueComment{$\hat{v}$ } on $\left \{ z=0 \right \}$ equals $\chi_F(x)-\chi_{F^c}(x)$ where $F$ is some set which agrees with $E$ outside the ball $B'_r(0) \subset \mathbb{R}^n$. Then, by Proposition \ref{prop extension energy} and the almost minimality of $E$ we have
\begin{equation}\label{eqn monotonicity theorem 2}
\int_{B_r^+}z^a|\nabla \hat{v}|^2dX \geq \int_{B_r^+}z^a|\nabla \hat{u}|^2dX-\rho(r)r^{n-s}.
\end{equation}
\noindent Using the construction of \BlueComment{$\hat{v}$}, we compute:
\BlueComment{
\begin{equation*}
\int_{B_r^+}z^a|\nabla \hat{v}|^2dX=\int_0^r\int_{\partial B_1^+} (tz)^a|(	\nabla \hat v)(t\hat X)|^2t^n\;dS\;dt
\end{equation*}
\begin{equation*}
\nabla \hat v (X) =r|X|^{-1}\left ( I-\hat X \otimes \hat X \right )(\nabla u_E) (r\hat X)=r|X|^{-1}(\nabla_S u_E) (r\hat X),
\end{equation*}
in the last identity $\nabla_S$ is the component of the gradient tangent to the sphere. Then,
\begin{equation*}
\int_{B_r^+}z^a|\nabla \hat{v}|^2dX=r^2\int_0^rt^{n+a-2}\int_{\partial B_1^+} z^a|(\nabla_S u_E)(r\hat X)|^2\;dS\;dt
\end{equation*}
\begin{equation*}
=r^2\int_0^rt^{n+a-2}dt \int_{\partial B_1^+} z^a|(\nabla_S u_E)(r\hat X)|^2\;dS=\frac{r^{n+2-s}}{n-s} \int_{\partial B_1^+} z^a|(\nabla_S u_E)(r\hat X)|^2\;dS.
\end{equation*}

Changing variables in the last integral, we get
\begin{equation*}
\int_{B_r^+}z^a|\nabla \hat{v}|^2dX=\frac{r}{n-s} \int_{\partial B_r^+} z^a|(\nabla_S u_E)(X)|^2\;dS.	
\end{equation*}

This together with (\ref{eqn monotonicity theorem 2})  implies (\ref{eqn monotonicity theorem}) and the monotonicity is proved. Actually, we have proved the sharper estimate
\begin{equation*}
\Phi_E'(r) \geq  \frac{1}{r^{n-s}}\left (\int_{\partial B_r^+}z^a(\hat{u}_\nu)^2dX\right ).
\end{equation*}
}

From here it also follows that if $\Phi'_E(r)\equiv 0$ then $u_\nu \equiv 0$ on every $\partial B_r$, which shows that $u$ must be homogeneous of degree $0$, in particular it must be so when restricted to $\{ z = 0\}$, but there $u=\chi_E-\chi_{E^c}$, so $E$ must be a cone with vertex at the origin. In this case by the scaling of the energy we see that $\rho \equiv 0$.
\end{proof}

With the monotonicity formula in hand, one can reproduce easily the blow up analysis of classical minimal surfaces (see \cite{G84} for the classical case). The main steps are summarized in the following theorem whose proof we omit. It is worth mentioning that here one needs the results at the end of section 4.

\begin{thm}

Let $E$ be \BlueComment{a $(J_s,\rho,d)$-minimal } in $B_1$, and $0\in \partial E$.

\begin{enumerate}

\item Suppose $r_k \to 0$ is a sequence such that
\begin{equation*}
E_{r_k} \to C \mbox{ in } L^1_{loc}
\end{equation*}
then $C$ is a \emph{minimal cone} with vertex at $0$. Moreover, the blow up sequence $E_{r_k}$ converges locally uniformly to the minimal cone $C$.

\item We have an \textbf{energy gap}: there exists a constant $\delta_0$ such that if $C$ is a minimal cone which is not a half-space, then $\Phi_C \geq \Phi_H +\delta_0$, where $H$ is any half-space.

\item If the minimal cone $C$ is a half-space, then $\partial E$ is $C^{1,\hat\rho}$ in some neighborhood of $0$. In particular, if $\delta_0$ is as in (2) and $\Phi_E(0) < \Phi_H+\delta_0$ then $\partial E$ is smooth in a neighborhood of $0$.

\end{enumerate}
\end{thm}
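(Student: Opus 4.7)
The plan is to tackle the three parts in order using the monotonicity formula (Lemma \ref{lem monotonicity formula}), the scaling behavior of $J_s$, the compactness Proposition \ref{thm compactness of sets}, and the improvement-of-flatness Theorem \ref{thm main}.

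For part (1), I would first check that $E_{r_k}:=r_k^{-1}E$ is $(J_s,\tilde\rho_{r_k},d/r_k)$-minimal with $\tilde\rho_{r_k}(t):=\rho(r_k t)$; this is immediate from the scaling $J_s(E;rA)=r^{n-s}J_s(E/r;A)$. The family $\{\tilde\rho_{r_k}\}$ satisfies A1--A3 uniformly in $k$ (the integral in A3 is invariant under the substitution $z\mapsto z/r_k$) and $\tilde\rho_{r_k}\to 0$ pointwise, so Proposition \ref{thm compactness of sets} gives that the $L^1_{loc}$ limit $C$ is a non-local minimal set; the corollary to Proposition \ref{prop density} upgrades this to uniform convergence on compacts. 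To see that $C$ is a cone, evaluate the monotonicity formula along the blow-up: a direct change of variables shows that the main extension-energy term of $\Phi_{E_{r_k}}(\rho)$ equals the main term of $\Phi_E(r_k\rho)$, while both $\rho$-correction integrals vanish as $k\to\infty$ by A2. Hence $\Phi_{E_{r_k}}(\rho)\to \Phi_E(0^+):=\lim_{r\to 0^+}\Phi_E(r)$, which exists by monotonicity. Lower semicontinuity of the Dirichlet-type energy yields $\Phi_C(\rho)\leq \Phi_E(0^+)$; combined with the monotonicity of $\Phi_C$ (Lemma \ref{lem monotonicity formula} with $\rho\equiv 0$ since $C$ is a genuine minimizer), this forces $\Phi_C$ to be constant in $\rho$, and the equality case of Lemma \ref{lem monotonicity formula} identifies $C$ as a cone with vertex at $0$.

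For part (2) I would argue by contradiction: if no such $\delta_0$ existed, there would be a sequence $\{C_j\}$ of non-half-space minimal cones with $\Phi_{C_j}\to \Phi_H$. By the density estimates and Proposition \ref{thm compactness of sets}, extract a subsequential limit $C_\infty$ which is again a minimal cone with $\Phi_{C_\infty}\leq \Phi_H$ by lower semicontinuity. A dimension-reduction argument in the spirit of classical minimal cone theory (iterated blow-ups at boundary points other than the vertex, controlled by the monotonicity formula) shows that any density-minimizing minimal cone must be a half-space; thus $C_\infty$ is a half-space. The uniform convergence on compacts then implies $\partial C_j$ is $\delta_0$-flat in $B_1$ for $j$ large, so Theorem \ref{thm main} makes $\partial C_j\in C^1(B_{1/2})$. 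Since $C_j$ is a cone its normal is constant along rays, so the tangent plane of $\partial C_j$ at $0$ (which, by the cone property and blow-up invariance, coincides with $\partial C_j$ itself) must be a hyperplane, forcing $C_j$ to be a half-space, a contradiction.

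For part (3), if any blow-up of $E$ at $0$ is a half-space, the uniform convergence from (1) means $E_r$ is $\delta_0$-flat in $B_1$ for all sufficiently small $r$, and Theorem \ref{thm main} gives $\partial E\cap B_{r/2}\in C^1$. Iterating Theorem \ref{thm improv prime} at each dyadic scale produces cylinders of flatness $\hat\rho(2^{-l})$ for all $l$, which is exactly the quantitative modulus $C^{1,\hat\rho}$ for the normal. The second assertion reduces to (2): if $\Phi_E(0^+)<\Phi_H+\delta_0$ then by part (1) every blow-up $C$ of $E$ at $0$ satisfies $\Phi_C=\Phi_E(0^+)<\Phi_H+\delta_0$, so by the energy gap $C$ must be a half-space, and the previous sentence concludes. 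The main anticipated obstacle is the dimension-reduction step in part (2), which requires identifying density-minimizing minimal cones as half-spaces; this should proceed in analogy with the classical theory via iterated blow-ups, but one must handle the long-range terms carefully, relying on the monotonicity of Lemma \ref{lem monotonicity formula} and the compactness Proposition \ref{thm compactness of sets}. A secondary technical point is verifying that the $\rho$-correction in the monotonicity formula is genuinely subleading under blow-up, which is guaranteed by A2 together with the uniform validity of A3 for the rescaled moduli $\tilde\rho_{r_k}$.
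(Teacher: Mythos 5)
The paper itself does not prove this theorem: it explicitly omits the proof, saying that with the monotonicity formula in hand one reproduces the classical blow-up analysis (as in \cite{G84} and Section 9 of \cite{CRS10}), using the results at the end of Section 4. Your outline follows exactly that intended route (rescaling of the almost-minimality modulus, compactness plus uniform convergence, monotonicity, energy gap by compactness and dimension reduction, and regularity of flat points via Theorems \ref{thm main} and \ref{thm improv prime}). There is, however, one genuine gap in your part (1): lower semicontinuity only gives $\Phi_C(t)\le\Phi_E(0^+)$, and a nondecreasing function bounded above by a constant need not be constant, so ``lsc combined with monotonicity'' does not force $\Phi_C$ to be constant. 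What is needed is the matching lower bound $\Phi_C(t)\ge\Phi_E(0^+)$, i.e.\ genuine convergence (not just lower semicontinuity) of the localized extension energies $\int_{B_t^+}z^a|\nabla\hat u_{E_{r_k}}|^2\,dX$ to $\int_{B_t^+}z^a|\nabla\hat u_{C}|^2\,dX$. This is where minimality is used a second time: one glues the extension of the limit to that of $E_{r_k}$ in a thin annulus, and uses Proposition \ref{prop extension energy} together with the almost-minimality of the rescaled sets (whose error $\rho(r_k t)t^{n-s}$ vanishes) and the uniform convergence from the corollary following Proposition \ref{prop density} to rule out an energy drop in the limit; this is the comparison argument behind Theorem 9.2 of \cite{CRS10} and of De Giorgi's classical theory. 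Without this step the cone property of $C$ is not established.

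A smaller point concerns part (2): the statement you invoke, ``any density-minimizing minimal cone is a half-space,'' must be used in the stronger form ``every minimal cone satisfies $\Phi_C\ge\Phi_H$, with equality only for half-spaces''; otherwise $\Phi_{C_\infty}\le\Phi_H$ does not yet tell you that $C_\infty$ is density-minimizing. That stronger statement is precisely what the Federer-type dimension reduction of \cite{CRS10} provides, so your appeal to it is consistent with the paper's framework, but it should be stated in that form. The remaining steps — transfer of flatness from $C_\infty$ to $C_j$, the observation that a cone which is differentiable at its vertex is a half-space, and the derivation of part (3) from Theorems \ref{thm main} and \ref{thm improv prime} together with $\Phi_C\le\Phi_E(0^+)<\Phi_H+\delta_0$ (for which lower semicontinuity alone does suffice) — are correct.
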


Finally, let us mention that the dimension reduction analysis done in \cite{CRS10} directly applies to our situation. Thus the estimate achieved there for the dimension of the singular case carries over to almost minimal sets too. Therefore, thanks to Theorem \ref{thm main} we have:

\begin{THM}
If $E$ is $(J_s,\rho,d)$-minimal in $\Omega$, then the Hausdorff dimension of the singular set $\Sigma_E \subset \partial E \cap \Omega$ is at most $n-2$.
\end{THM}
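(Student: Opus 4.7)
The plan is to follow the classical Federer--Almgren dimension reduction argument, adapted to the non-local setting as in~\cite{CRS10}. The essential inputs are already available in the excerpt: the volume density estimate (Proposition~\ref{prop density}), the $L^1_{loc}$ compactness of almost minimal boundaries (Proposition~\ref{thm compactness of sets}), the monotonicity formula (Lemma~\ref{lem monotonicity formula}), the epsilon-regularity/half-space-implies-smooth statement (part 3 of the theorem immediately above), and the energy gap for non-flat minimal cones (part 2).

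First I would define the singular set $\Sigma_E$ to be those $x_0 \in \partial E \cap \Omega$ in no neighborhood of which $\partial E$ is $C^{1,\hat\rho}$. By the preceding theorem this coincides with the set of $x_0$ at which no tangent cone is a half-space, and by the energy gap combined with the upper semicontinuity of $\Phi_E(x_0,\cdot)$ under $L^1_{loc}$-convergence it is relatively closed in $\partial E\cap\Omega$. For $x_0\in\partial E \cap \Omega$ and $r>0$ the rescaled set $E_{x_0,r} := (E-x_0)/r$ is $(J_s,\rho_r,d/r)$-minimal with $\rho_r(t)=\rho(rt)$, which tends to $0$ pointwise by assumption A2. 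Hence by Proposition~\ref{thm compactness of sets} any subsequential $L^1_{loc}$-limit $C$ of $E_{x_0,r_k}$ as $r_k\to 0$ is a $(J_s,0,+\infty)$-minimal set in all of $\mathbb{R}^n$; the equality case in Lemma~\ref{lem monotonicity formula} then forces $C$ to be a minimal cone with vertex at the origin.

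Next I would carry out the dimension reduction itself. Let $\mathcal{F}_n$ denote the class of non-half-space $(J_s,0,+\infty)$-minimal cones in $\mathbb{R}^n$ with vertex $0$ and with $0 \in \Sigma_C$. The family $\bigcup_n \mathcal{F}_n$ is closed under translations of the vertex along any invariant direction, dilations, rotations, and $L^1_{loc}$ limits; the last of these again uses Proposition~\ref{thm compactness of sets} together with the energy gap, which prevents a singular point from becoming regular in the limit. If $C\in\mathcal{F}_n$ has a singular point $y\neq 0$, then blowing up at $y$ produces a new minimal cone invariant under translations along the line $\mathbb{R}y$, hence splitting as a product of $\mathbb{R}$ with an element of $\mathcal{F}_{n-1}$. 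The base case $\mathcal{F}_1=\emptyset$ is trivial: the only cones in $\mathbb{R}$ with vertex $0$ are $\emptyset$, $\mathbb{R}$, and the two half-lines, and each is either trivial or a half-space. The standard Federer reduction (see Appendix~A of~\cite{G84}) then yields $\dim_\mathcal{H} \Sigma_E \leq n-2$.

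The main technical obstacle is the closure of the singular class under blow-up limits, i.e.\ showing that if $x_k\in\Sigma_{E_k}$ with $x_k\to x_0$ and $E_k\to E$ in $L^1_{loc}$, then $x_0\in\Sigma_E$. This is precisely where the energy gap is essential: the inequality $\Phi_{E_k}(x_k,0^+)\geq \Phi_H+\delta_0$ passes to the limit by upper semicontinuity of $\Phi$, so $\Phi_E(x_0,0^+)>\Phi_H$ and $x_0$ cannot be regular. Aside from this step everything is geometric and routine; the non-locality of $J_s$ does not interfere, since it has already been absorbed into the monotonicity formula, the compactness proposition, and the sharp half-space classification.
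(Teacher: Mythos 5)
Your proposal is correct and follows essentially the same route as the paper, which likewise obtains the bound by invoking the Federer--Almgren dimension reduction of \cite{CRS10}, fed by the monotonicity formula (Lemma \ref{lem monotonicity formula}), the compactness statement (Proposition \ref{thm compactness of sets}), the blow-up-to-minimal-cones and energy-gap statements, and the half-space $\varepsilon$-regularity combined with Theorem \ref{thm main}. The steps you leave to citation (the cylindrical splitting of translation-invariant cones, upper semicontinuity of $\Phi$, and the one-dimensional base case) are exactly the ones the paper itself defers to \cite{CRS10} without proof.
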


Additionally, the monotonicity formula and the theorem above show that, in the case of the obstacle problem (see Section 3), all points of $E$ near the contact set are regular, specifically, we have the following result:

\begin{THM}
Let $L \subset \subset \Omega$ be a domain with a smooth boundary. Suppose $E$ minimizes $J_s(.;\Omega)$ among all sets containing $L$, then $\partial E$ is $C^{1,\alpha}$ in a neighborhood of $L$ for some $\alpha<s$.
\end{THM}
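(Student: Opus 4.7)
The plan is to show that at each contact point the blowup of $E$ is the tangent half-space to $\partial L$ at that point, and then to invoke part (3) of the final theorem of Section 7 to conclude $C^{1,\hat\rho}$ regularity there. First, by Example 2, since $\partial L$ is smooth, $L$ is $(J_s,Ct^\alpha,\delta)$-minimal in $\Omega$ for some $\alpha\in(0,s)$, and by Example 3 this almost minimality is inherited by $E$. By the remark following Definition \ref{def rho hat}, the associated auxiliary modulus satisfies $\hat\rho(t)\leq C't^{\alpha/m}$, so any $C^{1,\hat\rho}$ conclusion will read as $C^{1,\alpha/m}$ with $\alpha/m<s$, which is exactly the regularity claimed.

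The core (and hardest) step is identifying the blowup at a contact point $x_0\in\partial E\cap\partial L$. Given $r_k\to 0$, set $E_k:=(E-x_0)/r_k$. By scaling, each $E_k$ is $(J_s,\rho_{r_k},\delta/r_k)$-minimal with $\rho_{r_k}(t)=Cr_k^\alpha t^\alpha\to 0$. Using the volume density estimates of Section 4 and Proposition \ref{thm compactness of sets}, I would extract a subsequence converging in $L^1_{loc}$ to a set $C$ that is genuinely $J_s$-minimal; the equality case of the monotonicity formula (Lemma \ref{lem monotonicity formula}) then forces $C$ to be a cone with vertex at the origin. Since $E\supset L$ and $\partial L$ is smooth at $x_0$, the blowups $(L-x_0)/r_k$ converge to the tangent half-space $H_{x_0}$, and hence $C\supset H_{x_0}$. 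The main obstacle is to rule out strict containment. For this I would invoke a strong maximum principle for non-local minimal cones: a non-local minimal cone containing a half-space through its own vertex must coincide with that half-space. This is a Hopf-type refinement of the maximum principle at the start of Section 5, proved by sliding a deformed spherical barrier against $\partial C$ from inside $H_{x_0}$ and using the supersolution Euler–Lagrange inequality (Theorem \ref{thm second Euler Lagrange}) to reach a contradiction unless $\partial C=\partial H_{x_0}$. With $C=H_{x_0}$ identified, part (3) of the final theorem in Section 7 immediately yields $\partial E\in C^{1,\hat\rho}$, and hence $\partial E\in C^{1,\alpha/m}$, in an open neighborhood $U_{x_0}$ of $x_0$.

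Finally, since $L\subset\subset\Omega$ and $\partial L$ is smooth, the contact set $\partial E\cap\partial L$ is compact, so finitely many of the neighborhoods $U_{x_0}$ cover it and produce an open set $U\supset\partial E\cap\partial L$ on which $\partial E$ is $C^{1,\alpha/m}$. Because $\mathrm{int}(L)\subset\mathrm{int}(E)$, we have $\partial E\cap L\subset\partial L\subset U$, so $\partial E\setminus U$ is closed and disjoint from the compact set $\overline L$. Setting $V:=\{x:d(x,L)<\tfrac12 d(\partial E\setminus U,\overline L)\}$ gives an open neighborhood of $L$ with $V\cap\partial E\subset U$, so $\partial E$ is $C^{1,\alpha'}$ in $V$ for $\alpha':=\alpha/m<s$, completing the proof.
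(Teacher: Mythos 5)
Your proposal is correct and follows essentially the route the paper indicates but leaves unproved: almost minimality of $E$ via Examples 2 and 3, blow-up at a contact point identified with the tangent half-space, and then part (3) of the final theorem of Section 7 together with a covering argument near the compact contact set. The one step you leave vague, the rigidity of a minimal cone $C$ containing a half-space $H_{x_0}$ through its vertex, is most cleanly closed not by a sliding construction but by applying Corollary \ref{cor pointwise Euler Lagrange} at the vertex (an interior tangent ball exists since $H_{x_0}\subset C$ touches $\partial C$ at $0$) and subtracting the half-space contribution, which vanishes by symmetry, so that $0\geq \int 2\chi_{C\setminus H_{x_0}}(x)|x|^{-n-s}dx$ forces $|C\setminus H_{x_0}|=0$.
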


\section*{Acknowledgements}
The authors would like to thank Luis Caffarelli, Ovidiu Savin and Jean Michel Roquejoffre for several discussions regarding their work and for their interest in this project. The second author was partially supported by a Graduate School Continuing Fellowship from the University of Texas a Austin and NSF Grant DMS-0654267. He would also like to thank his thesis advisor Luis Caffarelli for guidance and support. \BlueComment{The authors are also very much indebted to the anonymous referee for pointing out several important gaps in a first version of the manuscript, in particular in the proofs of Theorem \ref{thm second Euler Lagrange} and Lemma \ref{lem monotonicity formula}.}


\nocite{A76,G84,S07}

\bibliographystyle{amsplain}
\bibliography{nonlocalmc}

\end{document}